\newenvironment{sis}{\left\{\begin{aligned}}{\end{aligned}\right.}
\newtheorem{thm}{Theorem}[section]
\newtheorem{lemma}[thm]{Lemma}
\newtheorem{prop}[thm]{Proposition}
\newtheorem{cor}[thm]{Corollary}
\newtheorem{fact}[thm]{Fact}
\newtheorem*{thmA}{Theorem A}
\newtheorem*{thmB}{Theorem B}
\numberwithin{equation}{section}
\theoremstyle{definition}
\newtheorem{defi}[thm]{Definition}
\newtheorem{convention}[thm]{}
\theoremstyle{remark}
\newtheorem{remark}[thm]{Remark}
\newcommand{\Z}{\mathbb{Z}}
\newcommand{\Q}{\mathbb{Q}}
\newcommand{\HH}{\mathbb{H}}
\newcommand{\Pic}{\operatorname{Pic}}
\newcommand{\un}{\underline}
\newcommand{\ov}{\overline}
\newcommand{\wt}{\widetilde}
\newcommand{\wh}{\widehat}
\newcommand{\Hom}{{\rm Hom}}
\newcommand{\Spec}{\operatorname{Spec}}
\def \Im{{\rm Im}}
\def \Supp{{\rm Supp}}
\def \sign{{\rm sign}}
\def \id{{\rm id}}
\def \GL{{\rm GL}}
\def\J{\overline J}
\def \F{\mathcal F}
\def \G{\mathcal G}
\def \N {\mathcal N}
\def \X{\mathcal X}
\def\I{\mathcal I}
\def \L{\mathcal L}
\def\O{\mathcal O}
\def \A{\mathcal A}
\def \D{\mathcal D}
\def\M0{\mathcal M^0}
\def\bJ{{\mathbb J}}
\def\bJbar{\ov{{\mathbb J}}}
\def\calI{{\mathcal I}}
\def\calJ{{\mathcal J}}
\def \M{\mathcal M}
\def\m {\mathfrak{m}}
\def \Xsing{X_{\rm sing}}
\def \P{\mathcal P}
\def \Q{\mathcal Q}
\def \Pbun{\ov\P^{\rm un}}
\newcommand{\HC}{\operatorname{HC}}
\newcommand{\Hilb}{\operatorname{Hilb}}
\newcommand{\Hilbc}{\operatorname{{}^c Hilb}}
\newcommand{\Hilbl}{\operatorname{{}^l Hilb}}
\newcommand{\Hilbs}{\operatorname{{}^s Hilb}}
\newcommand{\Hilbr}{\operatorname{{}^r Hilb}}
\newcommand{\Hilbcs}{\operatorname{{}^{cs} Hilb}}
\newcommand{\Hilbcl}{\operatorname{{}^{cl} Hilb}}
\newcommand{\Flag}{\operatorname{Flag}}
\newcommand{\Sym}{\operatorname{Sym}}
\newcommand{\Open}{\left(\bJbar_X^{1-g}\times \bJbar_X^{1-g}\right)^{\natural}}
\newcommand{\Dc}{D_{\rm coh}}
\newcommand{\Dqc}{D_{\rm qcoh}}
\newcommand{\Spl}{\operatorname{Spl}}
\newcommand{\Pm}{\operatorname{Pic}^-}
\newcommand{\Pmm}{\operatorname{Pic}^{=}}
\newcommand{\Picbar}{\operatorname{\ov{\Pic}^o}}
\newcommand{\SHom}{{\mathcal{H}om}}
\newcommand{\RHom}{{\mathcal{R}\mathcal{H}om}}
\newcommand{\lotimes}{{\,\stackrel{\mathbf L}{\otimes}\,}}
\DeclareMathOperator{\codim}{{codim}}
\DeclareMathOperator{\supp}{{supp}}
\newcommand{\bbN}{{\mathbb N}}
\newcommand{\bbC}{{\mathbb C}}
\newcommand{\bbQ}{{\mathbb Q}}
\newcommand{\bbZ}{{\mathbb Z}}
\newcommand{\bbD}{{\mathbb D}}
\newcommand{\calH}{{\mathcal H}}
\newcommand{\cK}{{\mathcal K}}
\newcommand{\cO}{{\mathcal O}}
\newcommand{\bR}{{\mathbf R}}
\newcommand{\cplx}[1]{{{\mathcal #1}^{\scriptscriptstyle\bullet}}}
\title{Fourier-Mukai and autoduality for compactified Jacobians. II}
\author[Melo]{Margarida Melo}
\address{ Dipartimento di Matematica, Universit\`a Roma Tre, Largo S. Leonardo Murialdo 1, 00146 Roma (Italy); Departamento de Matem\'atica, Universidade de Coimbra, Largo D. Dinis, Apartado 3008, 3001 Coimbra (Portugal)}
\email{melo@mat.uniroma3.it, mmelo@mat.uc.pt}
\author[Rapagnetta]{Antonio Rapagnetta}
\address{Dipartimento di Matematica, Universit\`a di Roma II - Tor Vergata, via della Ricerca Scientifica 1, 00133 Roma (Italy)}
\email{rapagnet@mat.uniroma2.it}
\author[Viviani]{Filippo Viviani}
\address{ Dipartimento di Matematica, Universit\`a Roma Tre, Largo S. Leonardo Murialdo 1, 00146 Roma (Italy)}
\email{viviani@mat.uniroma3.it}
\begin{document}

\keywords{Compactified Jacobians, Fourier-Mukai transform, Poincar\'e bundle.}

\subjclass[2010]{14H40, 14H20, 14D20, 14F05, 14B07.}

\begin{abstract}

To every reduced (projective) curve $X$ with planar singularities one can associate, following E. Esteves, many fine compactified Jacobians, depending on the choice of a polarization on $X$, which are birational (possibly non-isomorphic) Calabi-Yau projective varieties with locally complete intersection singularities.
We define a Poincar\'e sheaf on the product of any two (possibly equal) fine compactified Jacobians of $X$ and show that the integral transform with kernel the Poincar\'e sheaf is an equivalence of their derived categories, hence it defines a Fourier-Mukai transform. As a corollary of this result, we prove that there is a natural equivariant open embedding of the connected component of the scheme parametrizing rank-$1$ torsion-free sheaves on $X$ into the connected component of the algebraic space parametrizing rank-$1$ torsion-free sheaves on a given fine compactified Jacobian of $X$.

The main result can be interpreted in two ways.
First of all, when the two fine compactified Jacobians are equal, the above Fourier-Mukai transform provides a natural autoequivalence of the derived category of any fine compactified Jacobian of $X$, which generalizes the classical result of S. Mukai for Jacobians of smooth curves and the more recent result of D. Arinkin for compactified Jacobians of integral curves with planar singularities. This provides further evidence for the classical limit of the geometric Langlands conjecture (as formulated by R. Donagi and T. Pantev).
Second, when the two fine compactified Jacobians are different (and indeed possibly non-isomorphic), the above Fourier-Mukai transform provides a natural equivalence of their derived categories, thus it implies that any two fine compactified Jacobians of $X$ are derived equivalent. This is in line with Kawamata's conjecture that birational Calabi-Yau (smooth) varieties should be derived equivalent and it seems to suggest an extension of this conjecture to (mildly) singular Calabi-Yau varieties.

\end{abstract}

\maketitle

\tableofcontents

\section{Introduction}

Let $C$ be a smooth irreducible projective curve over an algebraically closed field $k$ and let $J(C)$ be its Jacobian variety. Since $J(C)$ is an autodual abelian variety, i.e. it is canonically isomorphic to its
dual abelian variety, there exists a Poincar\'e line bundle $\P$ on $J(C)\times J(C)$ which is universal as a family of algebraically trivial line bundles on $J(C)$. In the breakthrough work \cite{mukai}, S. Mukai proved that the integral transform with kernel $\P$ is an auto-equivalence of the bounded derived category of  coherent sheaves on $J(C)$, or in other words it defines what is, nowadays, called a Fourier-Mukai transform.
 \footnote{More generally, for an arbitrary abelian variety $A$ with dual abelian variety $A^{\vee}$, Mukai proved that the Fourier-Mukai transform associated to the Poincar\'e line bundle on $A\times A^{\vee}$ gives an equivalence  between the bounded derived category of $A$ and that of $A^{\vee}$.}


\vspace{0,1cm}

Motivated by the classical limit of the geometric Langlands duality (see \cite{DP} and the discussion below), D. Arinkin extended in \cite{arin1} and \cite{arin2} the above Fourier-Mukai transform to the compactified Jacobians of integral projective curves with planar singularities.

\vspace{0,1cm}

The aim of this paper, which is heavily based on our previous manuscripts \cite{MRV1} and \cite{MRV2},  is to extend this autoequivalence to fine compactified Jacobians (as defined by E. Esteves in \cite{est1}) of {\em reduced projective curves with planar singularities.} The main novelty for reducible curves is that compactified Jacobians are not canonically defined but they depend on the choice of a polarization on the
curve itself. Indeed we also prove that given any two fine compactified Jacobians (which are always birational but possibly non isomorphic) of a reduced curve $X$ with planar singularities,
there is a Fourier-Mukai transform between  their derived categories, hence all fine compactified Jacobians of $X$ are derived equivalent.


\subsection{Fine compactified Jacobians of singular curves}

Before stating our main result, we need to briefly recall how Esteves's fine compactified Jacobians of \emph{reduced} curves are defined in \cite{est1}; we refer the reader to \S\ref{S:comp-Jac} for more details.
Fine compactified Jacobians of a reduced projective curve $X$ parametrize torsion-free rank-$1$ sheaves on $X$ that are semistable with respect to a general polarization on $X$. More precisely, a \emph{polarization} on $X$ is a tuple of rational numbers $\un q=\{\un q_{C_i}\}$, one for each irreducible component $C_i$ of $X$, such that $|\un q|:=\sum_i \un q_{C_i}\in \Z$.
A torsion-free rank-1  sheaf $I$ on $X$ of Euler characteristic $\chi(I):=h^0(X, I)-h^1(X, I)$ equal to $|\un q|$ is called
\emph{$\un q$-semistable} (resp. \emph{$\un q$-stable}) if
for every non-trivial subcurve $Y\subset X$, we have that
\begin{equation*}
\chi(I_Y) \geq \sum_{C_i\subseteq Y} \un q_{C_i} \: \: (\text{resp. } > ),
\end{equation*}
where $I_Y$ is the biggest torsion-free quotient of the restriction $I_{|Y}$ of $I$ to the subcurve $Y$. A polarization $\un q$ is called {\em general} if there are no strictly $\un q$-semistable sheaves, i.e. if every $\un q$-semistable sheaf is also $\un q$-stable (see Definition \ref{pola-def} for a numerical characterization of general polarizations). A  fine compactified Jacobian of $X$ is the fine moduli space $\ov{J}_X(\un q)$ of torsion-free rank-$1$ sheaves of degree $|\un q|$ on $X$ that are $\un q$-semistable (or equivalently $\un q$-stable) with respect to a general polarization $\un q$ on $X$.


If the curve $X$ has planar singularities, then we proved in \cite[Thm. A]{MRV1} that any fine compactified Jacobian $\ov{J}_X(\un q)$ of $X$ has the following remarkable properties (see Fact \ref{F:prop-Jac}):
\begin{itemize}
\item $\ov{J}_X(\un q)$ is a connected reduced scheme with locally complete intersection singularities and trivial canonical sheaf (i.e. it is a Calabi-Yau singular variety in the weak sense);
\item The smooth locus of $\ov{J}_X(\un q)$ coincides with the open subset $J_X(\un q)\subseteq \ov{J}_X(\un q)$ parametrizing line bundles; in particular $J_X(\un q)$ is dense in $\ov{J}_X(\un q)$ and $\ov{J}_X(\un q)$ is of pure dimension equal to the arithmetic genus $p_a(X)$ of $X$;
\item  $J_X(\un q)$ is the disjoint union of a number of copies of the generalized Jacobian $J(X)$ of $X$ (which is the smooth irreducible algebraic group parametrizing line bundles on $X$ of multidegree $0$)
and such a number  is independent of the chosen polarization $\un q$ and it is denoted by $c(X)$.
In particular, all the fine compactified Jacobian of $X$ have $c(X)$ irreducible components, all of  dimension equal to
$p_a(X)$, and they  are all birational among them.
\end{itemize}
Note also that the authors have found in
\cite{MRV1} examples of reducible curves (indeed even nodal curves) that admit non isomorphic (and even
non homeomorphic if $k=\bbC$) fine compactified Jacobians.


\subsection{Main results}


Let  $\J_X(\un q)$ and $\J_X(\un q')$ be two (possibly equal) fine compactified Jacobians of $X$ such that
$|\un q|=|\un q'|=0$. Starting from the universal sheaves on $X\times \J_X(\un q)$ and on $X\times \J_X(\un q')$,
it is possible to define, using the formalism of the determinant of cohomology, a (canonical) Poincar\'e line bundle $\P$ on $\ov{J}_X(\un q)\times J_X(\un q)\cup J_X(\un q')\times \J_X(\un q')$;
we refer the reader to \S\ref{S:Poinc1} for details.

Consider the inclusion $j: \ov{J}_X(\un q)\times J_X(\un q')\cup J_X(\un q)\times \J_X(\un q')\hookrightarrow \J_X(\un q)\times \J_X(\un q')$ and define $\ov\P:=j_*(\P)$.
In Theorem \ref{T:Poinc}, we prove that $\ov\P$ is a maximal Cohen-Macaulay (coherent) sheaf on $\J_X(\un q)\times \J_X(\un q')$, flat with respect to the projections over the two factors,
and whose restrictions over the fibers of each projection are again maximal Cohen-Macaulay sheaves.

The main result of this paper is the following


\begin{thmA}\label{T:MainThm}
Let $X$ be a reduced connected projective curve with planar singularities  over an algebraically closed field $k$ of characteristic either zero or bigger than the arithmetic genus $p_a(X)$ of $X$.
 Let  $\J_X(\un q)$ and $\J_X(\un q')$ be two (possibly equal) fine compactified Jacobians of
$X$ with $|\un q|=|\un q'|=1-p_a(X)$ and let $\Dqc^b(\ov{J}_X(\un q))$ and $\Dqc^b(\ov{J}_X(\un q'))$ (resp. $\Dc^b(\ov{J}_X(\un q))$ and $\Dc^b(\ov{J}_X(\un q'))$) be their bounded derived categories of quasi-coherent sheaves (resp. of coherent sheaves) .
The  integral transform with kernel  $\ov{\P}$ on $\ov{J}_X(\un q)\times \ov{J}_X(\un q')$
\begin{eqnarray*}
\Phi^{\ov{\P}}:\Dqc^b(\ov{J}_X(\un q))& \longrightarrow & \Dqc^b(\ov{J}_X(\un q'))\\
\cplx{E} &\longmapsto & \bR p_{2*}(p_1^*(\cplx{E})\lotimes \ov{\P})
\end{eqnarray*}
is an equivalence of triangulated categories (i.e. it defines a Fourier-Mukai transform) whose inverse is the integral transform $\Phi^{\ov\P^{\vee}[g]}$ with kernel
$\ov\P^{\vee}[g]:=\SHom(\ov\P, \O_{\J_X(\un q)\times \J_X(\un q')})[g]$. Moreover, $\Phi^{\ov{\P}}$ restricts to an equivalence of categories between $\Dc^b(\ov{J}_X(\un q))$ and $\Dc^b(\ov{J}_X(\un q'))$.
\end{thmA}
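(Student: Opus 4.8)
The plan is to prove that $\Phi^{\ov{\P}}$ is an equivalence by exhibiting a quasi-inverse, following the general machinery of Fourier-Mukai theory (in the style of Mukai, Bridgeland, and especially Arinkin's treatment of the integral case). The natural candidate for the inverse is the integral transform $\Phi^{\ov\P^\vee[g]}$ with the dualized kernel, shifted by $g = p_a(X)$. First I would set $K = \ov{\P}$ as a kernel on the product $Z := \ov{J}_X(\un q)\times \ov{J}_X(\un q')$, and recall from Theorem~\ref{T:Poinc} that $K$ is a maximal Cohen-Macaulay sheaf, flat over both factors, with MCM restrictions to the fibers. The strategy is to compute the two compositions $\Phi^{\ov\P^\vee[g]}\circ \Phi^{\ov\P}$ and $\Phi^{\ov\P}\circ\Phi^{\ov\P^\vee[g]}$ and show each is isomorphic to the identity functor on the respective derived category. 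By the standard formula for composing integral transforms, the composite $\Phi^{\ov\P^\vee[g]}\circ\Phi^{\ov\P}$ is again an integral transform, whose kernel is obtained by convolving $K$ with $K^\vee[g]$ over the middle factor; explicitly, it is $\bR p_{13*}\bigl(p_{12}^*K \lotimes p_{23}^*(K^\vee[g])\bigr)$ on $\ov{J}_X(\un q)\times \ov{J}_X(\un q)$. The goal is to identify this convolution kernel with the structure sheaf of the diagonal $\O_\Delta$, since the integral transform with kernel $\O_\Delta$ is the identity.

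The heart of the argument is therefore the computation of the convolution kernel and its identification with $\O_\Delta$. I would proceed in two stages. First, over the dense open smooth locus where all sheaves in question are line bundles, the computation reduces to Mukai's classical inversion theorem for the autodual generalized Jacobian $J(X)$: there the Poincar\'e bundle restricts to the usual Poincar\'e line bundle on a product of abelian-variety torsors, and Mukai's theorem (or Grothendieck-Serre duality plus a cohomology-and-base-change vanishing computation) gives that the convolution is $\O_\Delta$ up to the shift by $g$. The nontrivial input is the vanishing $\bR\Gamma(J(X), \P_y) = 0$ for a nontrivial algebraically trivial line bundle $\P_y$, and the identification of the one surviving cohomology at the diagonal point. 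Second, and this is where the singular geometry enters, I would need to extend this identification across the non-smooth locus, i.e.\ over the torsion-free sheaves that are not line bundles. This is where the maximal Cohen-Macaulay property of $K$, established in Theorem~\ref{T:Poinc}, becomes essential: the MCM hypothesis controls the relative duality, ensuring that $K^\vee$ behaves like a genuine dual (no higher $\EExt$ sheaves interfere), and that the derived restrictions to fibers compute as expected without spurious torsion. I expect to invoke a criterion (of the Bridgeland-Maciocia type, adapted to singular targets as in Arinkin) reducing the verification of ``$\Phi^{\ov{\P}}$ is an equivalence'' to a fiberwise/pointwise check: it suffices to verify that $\Phi^{\ov\P}$ sends skyscraper sheaves $k(x)$ to a spanning class of orthogonal objects whose cohomology computations match those of an equivalence, together with the composition-is-identity statement.

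The main obstacle, as anticipated, is the identification of the convolution kernel with $\O_\Delta$ over the \emph{entire} product, including the locus of non-locally-free sheaves, where classical abelian-variety techniques do not directly apply. The key technical difficulty is controlling the derived tensor product $p_{12}^*K \lotimes p_{23}^*(K^\vee[g])$ and its pushforward along $p_{13}$ near the singular points of the compactified Jacobians. Here I would lean heavily on two facts: that the compactified Jacobians have locally complete intersection singularities with trivial canonical sheaf (the Calabi-Yau property from Fact~\ref{F:prop-fineJac}), which makes Grothendieck-Serre duality take a particularly clean form ($\omega$ trivial, so the dualizing functor is just $\RHom(-,\O)[g]$ up to shift), and that $\ov\P = j_*\P$ is the pushforward from the smooth locus of each fiber, so that the computation away from the diagonal can be checked on the open stratum and then extended by the torsion-freeness/MCM property. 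Concretely, the argument that the off-diagonal cohomology vanishes should reduce, via a stratification by the type of torsion-free sheaf, to a vanishing statement on the generalized Jacobian together with the flatness and base-change statements of Theorem~\ref{T:Poinc}. Finally, once the quasi-coherent equivalence is established, the statement that $\Phi^{\ov\P}$ restricts to an equivalence on $\Dc^b$ follows formally: the kernel $\ov\P$ and its dual are coherent and the functors preserve coherence and boundedness, because $K$ is a flat and proper kernel with the finiteness guaranteed by the MCM/coherence properties, so the equivalence of the large categories restricts to the bounded coherent subcategories, which are intrinsically characterized (e.g.\ as the compact objects, or as the objects with coherent bounded cohomology) and hence preserved by any equivalence of the ambient triangulated categories.
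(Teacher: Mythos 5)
Your overall framework---compose $\Phi^{\ov\P}$ with $\Phi^{\ov\P^{\vee}[g]}$, identify the convolution kernel with $\O_{\Delta}$, and deduce the coherent statement formally at the end---matches the paper's reduction of Theorem A to the isomorphism $\Psi[g]\cong\O_{\Delta}$ (Corollary \ref{C:psi}). But the way you propose to prove that isomorphism has a genuine gap. You plan to work entirely on the fixed curve $X$: verify the statement over the line-bundle locus via Mukai-type vanishing, then extend across the non-locally-free locus using the maximal Cohen--Macaulay property and a stratification by sheaf type. This cannot close the argument when $X$ is singular. The obstruction is quantitative: the fiberwise vanishing you need fails on a locus that is too large. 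Concretely, the paper's Proposition \ref{P:codim-psi} shows that a pair $(I_1,I_2)$ lies in $\supp(\Psi)$ only if $(I_1)_{|X_{\rm sm}}=(I_2)_{|X_{\rm sm}}$, and the resulting bound on $\codim\,\supp(\Psi)$ is only $g^{\nu}(X)$, the geometric genus---strictly less than $g=p_a(X)$ for a singular curve. So on the fixed curve the support of the convolution kernel is not a priori contained in the diagonal, and the complement of the locus where your Mukai-type computation applies is not of small enough codimension for an MCM extension argument to force the answer. (Relatedly, the vanishing $\bR\Gamma(J(X),\P_y)=0$ you invoke lives on the non-proper group $J(X)$ and is not the cohomology that actually enters; what is needed is cohomology on the proper $\J_X(\un q')$ of $\ov\P_{I_1}^{\vee}\lotimes\ov\P_{I_2}$, and this does not vanish for all $I_1\neq I_2$.)

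The missing idea is the deformation over the semiuniversal family. The paper replaces $\Psi$ by its universal version $\Psi^{\rm un}$ over $\Spec R_X$ (Theorem \ref{T:psi-univ}) and plays two codimension estimates against each other: the equigeneric stratification (Fact \ref{F:Diaz-Har}) gives $\codim(\Spec R_X)^{p_a^{\nu}\le k}\ge g-k$, while Proposition \ref{P:codim-psi} gives codimension $\ge g^{\nu}(\X_s)$ on each fiber; these sum to $\ge g$ with equality forcing the fiber to be a smooth curve. Combined with the relative duality argument showing $\Psi^{\rm un}[g]$ is Cohen--Macaulay of codimension $g$, this pins the support down to the universal diagonal, the isomorphism is checked in codimension one (smooth and one-nodal fibers, where Mukai's and Arinkin's results apply), and extended by the CM property; one then restricts to the central fiber by base change. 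Without passing to the family, the identification $\Psi[g]\cong\O_{\Delta}$ on the fixed singular curve is not accessible by the route you describe. Your final paragraph on restricting the equivalence to $\Dc^b$ is fine and agrees with the paper.
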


\noindent Some comments on the hypothesis of Theorem A are in order.

First of all, the assumption that $|\un q|=|\un q'|=1-p_a(X)$, i.e. that we are dealing with fine compactified Jacobians parametrizing sheaves of Euler characteristic $1-p_a(X)$ (or equivalently degree $0$) on $X$, guarantees that the Poincar\'e sheaf $\P$ (and hence a fortiori its extension $\ov \P$) is canonically defined, independently of the universal sheaves on $X\times \J_X(\un q)$ and on $X\times \J_X(\un q')$ which are used in its definition \eqref{E:Poin-sheaf} (recall that such universal sheaves are only well-defined up to the pull-back of a line bundle on $\J_X(\un q)$ or on $\J_X(\un q')$, respectively); see Remark \ref{R:dep-sheaf} for a discussion of this issue. However, if $|\un q|\neq 1-p_a(X)$ or
$|\un q'|\neq 1-p_a(X)$, then one can fix once and for all a Poincar\'e sheaf $\P$ (together with its extension $\ov\P$) and all our arguments go through,  giving also in this case a Fourier-Mukai transform, although not canonically defined.

Second, the assumption that either ${\rm char}(k)=0$ or ${\rm char}(k)>p_a(X)$ is needed because of the following two facts: first of all, the results of M. Haiman on the isospectral Hilbert scheme $\wt{\Hilb}^n(S)$ of a smooth surface $S$, originally proved under the assumption that  ${\rm char}(k)=0$, are known to hold also if
${\rm char}(k)>n$ (as pointed out by M. Groechenig in \cite[p. 18--19]{Gro1}); second, for any fine compactified Jacobian $\J_X(\un q)$ of $X$, the rational twisted Abel maps from
$\Hilb^{p_a(X)}$ to $\J_X(\un q)$ are, locally on the codomain, smooth and surjective (see Fact \ref{F:Abel-sur} for the precise statement).

\vspace{0,2cm}

Theorem A can be interpreted in two ways depending on whether $\ov{J}_X(\un q)=\ov{J}_X(\un q')$ or $\ov{J}_X(\un q)\neq \ov{J}_X(\un q')$.

On one hand, when applied to the case $\ov{J}_X(\un q)=\ov{J}_X(\un q')$, Theorem A provides a Fourier-Mukai autoequivalence on any fine compactified Jacobian of a reduced curve with planar singularities, thus extending the classical result of S. Mukai \cite{mukai} for Jacobians of smooth curves and the more recent result of D. Arinkin \cite[Thm. C]{arin2} for compactified Jacobians of integral curves with planar singularities. Note that in loc. cit. Arinkin states his result under the assumption that ${\rm char}(k)=0$; however it was observed by M. Groechenig in \cite[Thm. 4.8]{Gro1} that Arinkin's proof works verbatim also under the assumption that ${\rm char}(k)>2p_a(X)-1$. Our proof of Theorem A uses twisted Abel maps (see \eqref{E:Abel-map}) instead of the global Abel map used by Arinkin for integral curves; this explains why we are able to improve the hypothesis on the characteristic of the base field even for integral curves. As a consequence, it follows that all the results of \cite[Sec. 4]{Gro1} are true under the weaker assumption that ${\rm char}(k)\geq n^2(h-1)+1$.

The above Fourier-Mukai autoequivalence provides further evidence for the classical limit of the (conjectural) geometric Langlands correspondence for the general linear group $\GL_r$,
as formulated by Donagi-Pantev in \cite{DP}. More precisely, in loc. cit. the authors conjectured that there should exist a Fourier-Mukai autoequivalence, induced by a suitable Poincar\'e sheaf,
of the derived category of the moduli stack of Higgs bundles. Moreover, among other properties, such  a Fourier-Mukai autoequivalence is expected to induce an autoequivalence of the derived category
of the fibers of the Hitchin map. Since the fibers of the Hitchin map can be described in terms of compactified Jacobians of spectral curves (see \cite[Appendix]{MRV2} and the references therein for the precise
description), it is natural to expect that such a Fourier-Mukai autoequivalence  should exist on each fine compactified Jacobian of a spectral curve, which has always planar singularities.
Our Main Theorem shows that this is indeed the case for reduced spectral curves (i.e. over the so called regular locus of the Hitchin map), extending the result of D. Arinkin for integral spectral curves
(i.e. over the so called elliptic locus of the Hitchin map).

On the other hand, in the general case when $\ov{J}_X(\un q)$ is different from $\ov{J}_X(\un q')$ (and possibly non isomorphic to it, see the examples in \cite{MRV1}), Theorem A implies that $\ov{J}_X(\un q)$ and $\J_X(\un q')$
(which are birational Calabi-Yau singular projective varieties by what said above) are derived equivalent via a canonical Fourier-Mukai transform.
 This result seems to suggest an extension to singular varieties of the conjecture of Kawamata \cite{Kaw}, which predicts that birational Calabi-Yau
smooth projective varieties should be derived equivalent.

We point out that a topological counterpart of the above result is obtained by the third author, together with L. Migliorini and V. Schende, in \cite{MSV}: any two fine compactified Jacobians of $X$ (under the same assumptions on $X$) have the same perverse Leray filtration on their cohomology. This result again seems to suggest an extension to (mildly) singular varieties of the result of Batyrev \cite{Bat} which  says that birational Calabi-Yau smooth projective varieties have the same Betti numbers.

\vspace{0,2cm}

As a corollary of Theorem A, we can generalize the autoduality result of D. Arinkin \cite[Thm. B]{arin2} for compactified Jacobians of integral curves (which extends the previous result of Esteves-Kleimann \cite{EK} for integral curves with nodes and cusps). In order to state our autoduality result, we need first to introduce some notation.

For a projective $k$-scheme $Z$,  denote by $\Spl(Z)$ the (possibly non-separated) algebraic space, locally of finite type over $k$, parametrizing simples sheaves on $Z$ (see \cite[Thm. 7.4]{AK}). Denote by $\Pmm(Z)\subseteq \Spl(Z)$ the open subset parametrizing simple, torsion-free sheaves having  rank-$1$ on each irreducible component of $Z$ and by $\Pm(Z)\subseteq \Pmm(Z)$ the open
 subset parametrizing simple, Cohen-Macaulay sheaves having  rank-$1$ on each irreducible component of $Z$ (see \cite[Prop. 5.13]{AK}).
If $Z$ does not have embedded components (or, equivalently, if the structure sheaf $\O_Z$ is torsion-free) then $\Pmm(Z)$ contains the Picard group scheme $\Pic(Z)$ of $Z$ as an open subset; under this hypothesis,  we will denote by $\Picbar(Z)$ the connected component of $\Pmm(Z)$ that contains  $\O_Z\in \Pic(Z)\subseteq \Pmm(Z)$.
Clearly, $\Picbar(Z)$ contains as an open subset the connected component $\Pic^o(Z)$ of $\Pic(Z)$ that contains $\O_Z\in \Pic(Z)$.

If $X$ is a projective reduced curve with locally planar singularities, then $\Pm(X)=\Pmm(X)$  (since on a curve torsion-free sheaves are also  Cohen-Macaulay) is known to be a scheme
(which is denoted by $\bJbar_X$ in \S\ref{S:comp-Jac})  and $\Picbar(X)$ is contained in the subscheme $\bJbar_X^{1-g}\subset \bJbar_X$ parametrizing torsion-free rank-$1$ sheaves on $X$ of Euler characteristic $1-p_a(X)$ (or equivalently degree $0$), see \S\ref{S:comp-Jac}.
Note that every fine compactified Jacobian $\J_X(\un q)$ of $X$ such that $|\un q|=1-p_a(X)$ is an open and proper subscheme of $\bJbar_X^{1-p_a(X)}$ (see \S\ref{S:comp-Jac}) and that the Poincar\'e sheaf considered above is actually a restriction of a Cohen-Macaulay Poincar\'e sheaf  $\ov\P$ on $\bJbar_X^0\times \bJbar_X^0$ (see  \S\ref{S:Poinc}).


In our previous paper \cite{MRV2} we proved that there is an isomorphism of algebraic groups (see \cite[Thm. C]{MRV2})
$$\begin{aligned}
 \beta_{\un q}: J(X)=\Pic^o(X) & \longrightarrow \Pic^o(\ov{J}_X(\un q)) \\
L & \mapsto \P_L:=\P_{|\ov{J}_X(\un q) \times \{L\}}.
\end{aligned}
$$

In this paper, we prove the following theorem that can be seen as a natural generalization of the above autoduality result.

\begin{thmB}
Let $X$ be a reduced connected projective curve with planar singularities over an algebraically closed field $k$  of characteristic either zero or bigger than the arithmetic genus $p_a(X)$ of $X$.
 Let  $\J_X(\un q)$ be a fine compactified Jacobian of $X$ with $|\un q|=1-p_a(X)$.
Then the morphism
\begin{equation}\label{E:rho}
\begin{aligned}
 \rho_{\un q}: \Picbar(X) & \longrightarrow \Picbar(\ov{J}_X(\un q)) \\
I & \mapsto \ov\P_I:=\ov\P_{|\ov{J}_X(\un q) \times \{I\}}
\end{aligned}
\end{equation}
is an open embedding which is equivariant with respect to the isomorphism of algebraic groups $ \beta_{\un q}: J(X)=\Pic^o(X) \stackrel{\cong}{\longrightarrow} \Pic^o(\ov{J}_X(\un q))$, where
$\Pic^o(X)$ (resp. $\Pic^o(\J_X(\un q))$) acts on $\Picbar(X)$ (resp. on $\Picbar(\J_X(\un q))$) by tensor product. Moreover:
\begin{enumerate}[(i)]
\item \label{E:Bi} The image of $\rho_{\un q}$ is contained in $\Pm(\J_X(\un q))\cap \Picbar(\J_X(\un q))$.
\item \label{E:Bii} The morphism $\rho_{\un q}$ induces a morphism of algebraic groups
$$ \rho_{\un q}: \Picbar(X)\cap \Pic(X)  \longrightarrow \Picbar(\ov{J}_X(\un q))\cap \Pic(\J_X(\un q)).$$
\item \label{E:Biv} If  every singular point of $X$ that lies on at least two different irreducible components of $X$ is a separating node
(e.g. if $X$ is an irreducible curve or a nodal curve of compact type) then $\rho_{\un q}$ is an isomorphism between integral projective varieties.
\end{enumerate}
\end{thmB}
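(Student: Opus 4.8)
The plan is to realize $\rho_{\un q}$ as the restriction to skyscraper sheaves of a Fourier--Mukai transform, and to extract all of its properties from the equivalence of Theorem \ref{T:MainThm}. First I would check that $\rho_{\un q}$ is a well-defined morphism with image in $\Pm(\J_X(\un q))\cap\Picbar(\J_X(\un q))$, which is part \ref{E:Bi}. By Theorem \ref{T:Poinc} the sheaf $\ov\P$ on $\bJbar_X^0\times\bJbar_X^0$ is Cohen--Macaulay and flat over the second projection, with maximal Cohen--Macaulay fibres that are moreover of rank $1$ on each component (being line bundles over the dense locus of line bundles). Hence the family $\{\ov\P_I\}_{I\in\Picbar(X)}$ defines a morphism to $\Spl(\J_X(\un q))$ landing in $\Pm(\J_X(\un q))$, and the normalization $\ov\P_{\O_X}\cong\O_{\J_X(\un q)}$ forces its image into the connected component $\Picbar(\J_X(\un q))$; simplicity of the $\ov\P_I$ will follow from the computation below.

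The crucial observation is that every $I\in\Picbar(X)$, being a simple sheaf, is $\un q'$-stable for a suitable general polarization $\un q'$ with $|\un q'|=0$, so that $I$ is a point of $\J_X(\un q')$ and $\ov\P_I=\ov\P_{|\J_X(\un q)\times\{I\}}$ is precisely the value at the skyscraper $k(I)$ of the Fourier--Mukai transform with kernel $\ov\P$ from $\Dc^b(\J_X(\un q'))$ to $\Dc^b(\J_X(\un q))$, which is an equivalence by Theorem \ref{T:MainThm} applied to the pair $(\un q',\un q)$ (the derived restriction is an honest sheaf by the flatness in Theorem \ref{T:Poinc}). Full faithfulness then gives $\Ext^\bullet_{\J_X(\un q)}(\ov\P_I,\ov\P_{I'})\cong\Ext^\bullet_{\J_X(\un q')}(k(I),k(I'))$ for all $I,I'$. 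In degree $0$ with $I=I'$ this yields $\Hom(\ov\P_I,\ov\P_I)=k$, i.e. simplicity (finishing part \ref{E:Bi}); for $I\neq I'$ the orthogonality of distinct skyscrapers gives injectivity of $\rho_{\un q}$ on points; and in degree $1$ it identifies $T_{\rho_{\un q}(I)}\Picbar(\J_X(\un q))=\Ext^1_{\J_X(\un q)}(\ov\P_I,\ov\P_I)$ with $\Ext^1_{\J_X(\un q')}(k(I),k(I))=T_I\J_X(\un q')=\Ext^1_X(I,I)=T_I\Picbar(X)$, so that the differential of $\rho_{\un q}$ is an isomorphism. Because the equivalence identifies the whole local deformation theory of $k(I)$ with that of $\ov\P_I$, and not merely the tangent spaces, $\rho_{\un q}$ is formally \'etale; being also injective, it is an open embedding.

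The equivariance with respect to $\beta_{\un q}$ and part \ref{E:Bii} I would obtain from the cube (biextension) structure of the Poincar\'e sheaf: the isomorphisms $\ov\P_{I\otimes L}\cong\ov\P_I\otimes\P_L$ for $L\in\Pic^o(X)$, and $\ov\P_{L\otimes L'}\cong\ov\P_L\otimes\ov\P_{L'}$ for line bundles $L,L'$, hold over the dense locus of line bundles---where they reduce to \cite[Thm. C]{MRV} and the autoduality of $J(X)$---and extend over all of $\Picbar(X)$ since both sides are Cohen--Macaulay sheaves agreeing on a dense open subset. This makes $\rho_{\un q}$ equivariant and restricts it to a homomorphism of algebraic groups on $\Picbar(X)\cap\Pic(X)$.

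For part \ref{E:Biv}, under the separating-node hypothesis the fine compactified Jacobian $\J_X(\un q)$ is integral (its number of irreducible components $c(X)$ being $1$ in this case) and both $\Picbar(X)$ and $\Picbar(\J_X(\un q))$ are integral projective varieties; since $\Picbar(X)$ is then proper, the open embedding $\rho_{\un q}$ has open and closed, hence full, image in the irreducible target, and is therefore an isomorphism. The main obstacle I anticipate is the passage from the tangent isomorphism to the open embedding: one must (a) confirm that every simple sheaf in $\Picbar(X)$ is indeed stabilized by some general degree-$0$ polarization, so that the entire source enters the Fourier--Mukai picture, and (b) verify that the isomorphism on deformation functors produced by the equivalence is the one actually induced by $\rho_{\un q}$---that is, identify the Kodaira--Spencer map of the family $\ov\P$ with the Fourier--Mukai transform---and that it is an isomorphism of complete local rings rather than only of Zariski tangent spaces, which is what makes $\rho_{\un q}$ formally \'etale and its image open.
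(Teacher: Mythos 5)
Your overall strategy --- realizing $\rho_{\un q}$ as the action of the Fourier--Mukai equivalence on skyscraper sheaves and extracting simplicity, injectivity and \'etaleness from full faithfulness --- is close in spirit to the paper's, and your treatment of part \eqref{E:Bi}, of the equivariance, of part \eqref{E:Bii}, and of local structure (the paper proves the restriction of $\rho_{\un q}$ to each fine compactified Jacobian $\J_X(\un q')$ is an open embedding, via Hilbert polynomials and an explicit inverse $F\mapsto \supp\Phi^{\ov\P^{\vee}[g]}(F)$, rather than via deformation functors, but both routes are viable) is essentially sound. However, there is a genuine gap in your injectivity argument. The orthogonality $\Ext^{\bullet}(\mathbf{k}(I),\mathbf{k}(I'))=0$ for $I\neq I'$ only applies when $I$ and $I'$ are points of the \emph{same} fine compactified Jacobian $\J_X(\un q')$, i.e. when they are simultaneously stable for a single general polarization. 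For a reducible $X$ the moduli space $\bJbar_X^0$ is non-separated and is only \emph{covered} by the fine compactified Jacobians (Fact \ref{F:Este-Jac}\eqref{F:Este-Jac3}); two distinct simple sheaves $I_1\neq I_2$ in $\Picbar(X)$ need in general two different polarizations $\un q^1,\un q^2$ to be stabilized, and then no single Fourier--Mukai equivalence sees both as skyscrapers. So your argument gives injectivity only on each chart $\J_X(\un q')\cap\Picbar(X)$, i.e. that $\rho_{\un q}$ is a local isomorphism --- which is exactly where the paper also lands after its Claim 1 --- but not global injectivity.

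The paper closes this gap with a substantial additional argument (Claim 2 of its proof): it first reduces, via the decomposition of $X$ into separating blocks, to the case where $X$ has no separating nodes; it then extends $\wt\rho_{\un q}$ over the semiuniversal deformation $\Spec R_X$, compares the two universal sheaves pulled back from $\J_{\X}(\un q^1)$ and $\J_{\X}(\un q^2)$ on the overlap of their images, proves they agree over the open locus $V_0$ where the fibres of $\X\to\Spec R_X$ are integral with at most one node (using the Esteves--Kleiman/Arinkin autoduality for integral curves), and concludes by a codimension-$\geq 2$ Cohen--Macaulay extension argument (Fact \ref{F:Diaz-Har}\eqref{F:Diaz-Harris3} and \cite[Thm. 5.10.5]{EGAIV2}). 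None of this is present in or implied by your proposal; the obstacles you flag at the end ((a) stabilizability of each individual sheaf, (b) the Kodaira--Spencer identification) are real but secondary, and neither addresses the failure of simultaneous stability for pairs of sheaves. A smaller remark on part \eqref{E:Biv}: your properness argument needs the target $\Picbar(\J_X(\un q))$ to be separated for the image of the proper source to be closed, which is not given ($\Spl$ is a possibly non-separated algebraic space); the paper instead deduces \eqref{E:Biv} directly from Arinkin's theorem applied to each integral separating block.
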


Part \eqref{E:Biv} of the above Theorem B is a slight generalization of the result for irreducible curves proved by D. Arinkin in \cite[Thm. B]{arin2}.
It would be interesting to know if $\rho_{\un q}$ is an isomorphism for any reduced curve $X$ with locally planar singularities.


\subsection{Sketch of the proof of Theorem A}

Let us now give a brief outline of the proof of Theorem A.

Using the well-known description of the kernel of a composition of two integral transforms,  Theorem A is equivalent to the following equality in
$\Dc^b(\J_{X}(\un q)\times \J_{X}(\un q))$:
\begin{equation}\label{Eq1}
\Psi[g]:=Rp_{13*}\left(p_{12}^*((\ov \P)^{\vee})\lotimes p_{23}^*(\ov\P)\right)[g] \cong \O_{\Delta},
\end{equation}
where $p_{ij}$ denotes the projection of $\J_{X}(\un q)\times \J_X(\un q')\times \J_{X}(\un q)$ onto the $i$-th and $j$-th factors and
$\O_{\Delta}$ is the structure sheaf of the diagonal $\Delta \subset \J_{X}(\un q)\times \J_{X}(\un q)$.

In order to prove \eqref{Eq1}, the key idea, which we learned from D. Arinkin in \cite{arin1} and \cite{arin2}, is to prove a similar formula for the effective semiuniversal deformation \footnote{In loc. cit., D. Arinkin considers the stack of all (integral) curves with planar singularities. Here (and in our previous related papers \cite{MRV1} and \cite{MRV2}), we need to work with the semiuniversal deformation space of $X$ in order to be able to define universal fine compactified Jacobians with respect to any general polarization on the central fiber, see \S \ref{S:defo}.}.
family $\pi:\X\to \Spec R_X$ of the curve $X$.
The fine compactified Jacobians $\J_X(\un q)$ and $\J_X(\un q')$ deform over $\Spec R_X$ to the universal fine compactified Jacobians $\J_{\X}(\un q)$ and $\J_{\X}(\un q')$, respectively;
see  \S\ref{S:defo}.
 Moreover, the Poincar\'e sheaf  $\ov \P$ on $\ov{J}_X(\un q)\times \J_X(\un q')$ deforms to a universal Poincar\'e sheaf $\ov{\P}^{\rm un}$ on the fiber product
 $\ov{J}_{\X}(\un q)\times_{\Spec R_X}  \J_{\X}(\un q')$.
Equation \eqref{Eq1} will follow, by restricting to the central fiber, from the following universal version of it (which we prove in Theorem \ref{T:psi-univ}):
\begin{equation}\label{Eq2}
\Psi^{\rm un}[g]:=Rp_{13*}\left(p_{12}^*((\Pbun)^{\vee})\lotimes p_{23}^*(\Pbun)\right)[g]\cong \O_{\Delta^{\rm un}}  \in \Dc^b(\J_{\X}(\un q)\times_{\Spec R_X} \J_{\X}(\un q)),
\end{equation}
where $p_{ij}$ denotes the projection of $\J_{\X}(\un q)\times_{\Spec R_X}\J_{\X}(\un q')\times_{\Spec R_X} \J_{\X}(\un q)$ onto the  $i$-th and $j$-th factor and
$\O_{\Delta^{\rm un}}$ is the structure sheaf of the universal diagonal $\Delta^{\rm un}\subset \J_{\X}(\un q)\times_{\Spec R_X}\J_{\X}(\un q)$.

A key intermediate step in proving \eqref{Eq2} consists in showing that
\begin{equation*}
\Psi^{\rm un}[g] \text{ is a Cohen-Macaulay sheaf such that } \supp \Psi^{\rm un}[g] =\Delta^{\rm un}. \tag{*}
\end{equation*}

The two main ingredients in proving (*) are the equigeneric stratification of $\Spec R_X$ (see Fact \ref{F:Diaz-Har}) and a lower bound for the codimension of the support of the restriction of
$\Psi^{\rm un}[g]$ on the fibers of $\J_{\X}(\un q)\times_{\Spec R_X}\J_{\X}(\un q)\to \Spec R_X$ (see Proposition \ref{P:codim-psi}).

\subsection{Outline of the paper}

The paper is organized as follows.

In Subsection \ref{S:comp-Jac} we collect several facts on fine compactified Jacobians of reduced curves, with special emphasis on the case of curves with planar singularities.
In Subsection \ref{S:defo} we recall some facts on deformation theory that will be crucial in the proof of Theorem A: the  equigeneric stratification of the semiuniversal deformation space of a curve with planar singularities (see Fact \ref{F:Diaz-Har}) and the  universal fine compactified Jacobians (see Fact \ref{F:Punivfine}).



Section \ref{S:Hilb} is devoted to Hilbert schemes of points on smooth surfaces and on curves with planar singularities. More precisely, in Subsection \ref{S:Hilb-sur} we recall some classical facts on the Hilbert scheme of points on a smooth surface and on the Hilbert-Chow morphism together with the recent results of M. Haiman on the isospectral Hilbert scheme. In Subsection \ref{S:Hilb-cur}, we recall some facts on the Hilbert scheme of a curve $X$ with planar singularities and ton the  local Abel map from the Hilbert scheme of $X$ to any fine compactified Jacobian of $X$.

In Section \ref{S:Poinc}, we define the Poincar\'e sheaf $\ov\P$ and we prove that it is a maximal Cohen-Macaulay sheaf, flat over each factor (see Theorem \ref{T:Poinc}).
The proof of Theorem \ref{T:Poinc} is based on the work of Arinkin \cite{arin2}, which uses in a crucial way the properties of  M. Haiman's isospectral Hilbert scheme of a surface.

In Section \ref{S:Poinc3}, we establish several properties of the Poincar\'e sheaf $\ov\P$ while Section \ref{S:proof} contains the proofs of Theorem A and of Theorem B.


\vspace{0,2cm}

The following notations will be used throughout the paper.

\subsection*{Notations}
\begin{convention}
 	$k$ will denote an algebraically closed field (of arbitrary characteristic), unless otherwise stated. All \textbf{schemes} are $k$-schemes, and all morphisms are implicitly assumed to respect
	the $k$-structure.
\end{convention}

\begin{convention}\label{N:curves}
	A \textbf{curve}  is a \emph{reduced} projective scheme over $k$ of pure dimension $1$. 

Given a curve $X$, we denote by $X_{\rm sm}$ the smooth locus of $X$, by $X_{\rm sing}$ its singular locus and by $\nu:X^{\nu}\to X$ the normalization morphism.
 We denote by $\gamma_X$, or simply by $\gamma$ where there is no danger of confusion, the number of irreducible components of $X$.

We denote by $p_a(X)$  the \emph{arithmetic genus} of $X$, i.e.  $p_a(X):=1-\chi(\O_X)=1-h^0(X,\O_X)+h^1(X, \O_X) $.
We denote by $g^{\nu}(X)$ the \emph{geometric genus} of $X$, i.e. the sum of the genera of the connected components of the normalization $X^{\nu}$, and by $p_a^{\nu}(X)$ the arithmetic genus of the
normalization $X^{\nu}$ of $X$. Note that $p_a^{\nu}(X)=g^{\nu}(X)+1-\gamma_X$.

\end{convention}

\begin{convention}
	A \textbf{subcurve} $Z$ of a curve $X$ is a closed $k$-scheme $Z \subseteq X$ that is reduced  and of pure dimension $1$.  We say that a subcurve $Z\subseteq X$ is non trivial if
	$Z\neq \emptyset, X$.
	
	Given two subcurves $Z$ and $W$ of $X$ without common irreducible components, we denote by $Z\cap W$ the $0$-dimensional subscheme of $X$ that is obtained as the
	scheme-theoretic intersection of $Z$ and $W$ and we denote by $|Z\cap W|$ its length.
	
	Given a subcurve $Z\subseteq X$, we denote by $Z^c:=\ov{X\setminus Z}$ the \textbf{complementary subcurve} of $Z$ and we set $\delta_Z=\delta_{Z^c}:=|Z\cap Z^c|$.
 \end{convention}

\begin{convention}
A curve $X$ is called \textbf{Gorenstein} if its dualizing sheaf $\omega_X$ is a line bundle.
\end{convention}

\begin{convention}
A curve $X$ has \textbf{locally complete intersection (l.c.i.) singularities at $p\in X$} if the completion $\wh{\O}_{X,p}$ of the local ring of $X$ at $p$ can be written as
$$\wh{\O}_{X,p}=k[[x_1,\ldots,x_r]]/(f_1,\ldots,f_{r-1}),$$
for some $r\geq 2$ and some $f_i\in k[[x_1,\ldots,x_r]]$. A curve $X$ has locally complete intersection (l.c.i.)
singularities if $X$ is l.c.i. at every $p\in X$.
Clearly, a curve with l.c.i. singularities is Gorenstein.
\end{convention}

\begin{convention}
A curve $X$ has \textbf{planar singularities at $p\in X$} if  the completion
$\wh{\O}_{X,p}$ of the local ring of $X$ at $p$ has embedded dimension two, or equivalently if it can be written
as
$$\wh{\O}_{X,p}=k[[x,y]]/(f),$$
for a reduced series $f=f(x,y)\in k[[x,y]]$.
A curve $X$ has planar singularities if $X$ has planar singularities at every $p\in X$.
Clearly, a curve with planar singularities has l.c.i. singularities, hence it is Gorenstein.
\end{convention}

\begin{convention}\label{N:Jac-gen}
Given a curve $X$, the \textbf{generalized Jacobian} of $X$, denoted by $J(X)$ or by $\Pic^{\un 0}(X)$,
is the algebraic group whose $k$-valued points are the group of line bundles on $X$ of multidegree $\un 0$ (i.e. having
degree $0$ on each irreducible component of $X$) together with the multiplication given by the tensor product.
The generalized Jacobian of $X$ is a connected commutative smooth algebraic group of dimension equal to
$h^1(X,\O_X)$ and it coincides with the connected component of the Picard scheme $\Pic(X)$ of $X$ containing the identity.
\end{convention}

\begin{convention}
Given a scheme $Y$, we will denote by $D(Y)$ the \textbf{derived category} of complexes of $\mathcal{O}_Y$-modules with quasi-coherent cohomology sheaves and by $D^b(Y)\subset D(Y)$ the \textbf{bounded derived category} consisting of complexes with only finitely many non-zero cohomology sheaves. We will denote by $\Dc(Y)\subset D(Y)$ (resp. $\Dc^b(Y)\subset D^b(Y)$) the full category consisting of complexes with coherent cohomology and by $\Dqc(Y)\subset D(Y)$ (resp. $\Dqc^b(Y)\subset D^b(Y)$) the full category consisting of complexes with quasi-coherent cohomology.
\end{convention}

\begin{convention}
Given a scheme $Y$ and a closed point $y\in Y$, we will denote by ${\bf k}(y)$ the \textbf{skyscraper sheaf} supported at $y$.
\end{convention}

\section{Fine compactified Jacobians and their universal deformations}

The aim of this section is to summarize some properties of fine (universal) compactified Jacobians of connected reduced curves with planar singularities
which were proved in \cite{MRV1} and \cite{MRV2}. Throughout this section, we fix a connected reduced curve $X$ over an algebraically closed field $k$.

\subsection{Fine compactified Jacobians} \label{S:comp-Jac}



The aim of this subsection is to review the definition and the main properties of fine compactified Jacobians of reduced curves with
planar singularities, referring to \cite[\S 2]{MRV1} for complete proofs.

Fine compactified Jacobians of a curve $X$ will parametrize sheaves on $X$ of a certain type, that we are now going to define.

\begin{defi}\label{D:sheaves}
A coherent sheaf $I$ on a connected reduced curve $X$ is said to be:
\begin{enumerate}[(i)]
\item \emph{rank-$1$} if $I$ has generic rank $1$ at every irreducible component of $X$;
\item \emph{torsion-free} if $\Supp(I)=X$ and every non-zero subsheaf $J\subseteq I$ is such that $\dim \Supp(J)=1$;
\item \emph{simple} if ${\rm End}_k(I)=k$.
\end{enumerate}
\end{defi}
\noindent Note that any line bundle on $X$ is a simple rank-$1$ torsion-free sheaf.

\vspace{0,1cm}

If the curve $X$ is Gorenstein, then rank-$1$ torsion-free sheaves on $X$ correspond to linear equivalence classes of generalized divisors in the sense of Hartshorne (see \cite[Prop. 2.8]{Har3}).
This allows us to describe these sheaves in terms of (usual) effective divisors as follows.

\begin{lemma}\label{L:sheaves}
Let $X$ be a (reduced) Gorenstein curve and let $I$ be a rank-$1$ torsion-free sheaf on $X$. Then there exist two disjoint effective divisors $E_1$ and $E_2$ on $X$, with $E_2$ being a Cartier divisor supported on the smooth locus of $X$, such that
$$I=I_{E_1}\otimes I_{E_2}^{-1},
$$
where $I_{E_i}$ denotes the ideal sheaf of $E_i$ (for $i=1,2$).
\end{lemma}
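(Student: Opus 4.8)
The plan is to produce the decomposition by clearing denominators in a controlled way, using the projectivity of $X$ together with a general-position argument for $E_2$. Since $I$ is torsion-free of rank $1$, the natural map $I \to I \otimes_{\O_X} \mathcal{K}_X$ into the sheaf of total quotient rings is injective and identifies $I$ with a nondegenerate coherent fractional ideal; this is exactly the incarnation of $I$ as a generalized divisor underlying \cite[Prop. 2.8]{Har3}. Writing $I = I_{E_1}\otimes I_{E_2}^{-1}$ with $E_2$ an effective Cartier divisor amounts, since then $I_{E_2}^{-1} = \O_X(E_2)$, to exhibiting an injection $I \hookrightarrow \O_X(E_2)$ whose image, twisted down by $E_2$, is the ideal sheaf of an effective divisor $E_1$ disjoint from $E_2$. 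So the whole statement reduces to producing one good embedding $I \hookrightarrow \O_X(E_2)$.

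To build such an embedding I would fix a very ample line bundle $A$ on the projective curve $X$ and pass to a large twist. A homomorphism $I \to A^{\otimes m}$ is the same as a global section of $\SHom(I,\O_X)\otimes A^{\otimes m}$, and for $m\gg 0$ this sheaf is globally generated by Serre's theorem. Hence a general $\varphi\colon I \to A^{\otimes m}$ is nonzero at each of the finitely many generic points of $X$; being a nonzerodivisor on the torsion-free sheaf $I$, it is injective, and its cokernel $Q$ is coherent of finite length since $\varphi$ is an isomorphism at every generic point. Next, again for $m\gg 0$ so that $A^{\otimes m}$ is very ample, I would choose a general section $s\in H^0(X,A^{\otimes m})$; its zero scheme $E_2$ is then a reduced effective Cartier divisor which, by general position, lies in the smooth locus $\Xsm$ and avoids the finite set $\supp Q \cup \Xsing$. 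Identifying $A^{\otimes m}\cong \O_X(E_2)$ via $s$ turns $\varphi$ into an injection $I \hookrightarrow \O_X(E_2)$.

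It then remains to read off $E_1$ and check disjointness. Twisting the exact sequence $0\to I \to \O_X(E_2) \to Q \to 0$ by $\O_X(-E_2)$ and using that $\O_X(-E_2)$ is trivial on $\supp Q$ (because $E_2\cap \supp Q=\emptyset$) yields $0 \to I(-E_2) \to \O_X \to Q \to 0$; thus $I(-E_2)$ is an ideal sheaf with finite-length quotient $Q$, that is, it is $I_{E_1}$ for the effective divisor $E_1$ cut out by this ideal, whose support is exactly $\supp Q$. By construction $E_1$ and $E_2$ are disjoint, $E_2$ is Cartier and supported on $\Xsm$, and $I \cong I_{E_1}\otimes \O_X(E_2) = I_{E_1}\otimes I_{E_2}^{-1}$, as desired.

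The conceptual content is elementary, and the only delicate point is the general-position step: one must guarantee simultaneously that $\varphi$ is injective with finite-length cokernel and that the divisor $E_2$ can be taken on the smooth locus and disjoint from the support of that cokernel (and from $\Xsing$). Both are handled by taking $m$ large enough that the relevant sheaves are globally generated, respectively very ample, and then invoking that over an algebraically closed field a general section of a base-point-free line bundle on a curve has reduced zero locus avoiding any prescribed finite set. The non-locally-free points of $I$ are automatically absorbed into $E_1$ rather than $E_2$, since a homomorphism from a non-invertible stalk onto a free rank-$1$ module cannot be surjective. This is the step where the projectivity of $X$ is essential.
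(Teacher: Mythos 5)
Your proof is correct and takes essentially the same route as the paper. The paper's proof simply invokes Hartshorne's result \cite[Prop.~2.11]{Har3} to produce the decomposition $I=I_{E_1}\otimes I_{E_2}^{-1}$ with $E_2$ Cartier and linearly equivalent to a high power of an ample line bundle, and then moves $E_2$ within its linear system into general position; you reprove that cited ingredient directly (via global generation of $\SHom(I,\O_X)\otimes A^{\otimes m}$ and a general section of $A^{\otimes m}$) before carrying out the same general-position step.
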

\begin{proof}
It follows from \cite[Prop. 2.11]{Har3} that we can write $I=I_{E_1}\otimes I_{E_2}^{-1}$ for two effective divisors $E_1$ and $E_2$ on $X$ such that $E_2$ is Cartier and linearly equivalent to a
an arbitrary high power of a fixed ample line bundle. Thus, up to by replacing $E_2$ with a divisor linear equivalent to it, we can assume that the support of $E_2$ is disjoint from the singular locus of
$X$ and from the support of $E_1$, q.e.d.
\end{proof}

Rank-$1$ torsion-free simple sheaves on $X$ can be parametrized by a scheme. More precisely,  there exists $k$-scheme $\bJbar_X$, locally of finite type and universally closed over $k$, which represents the
 Zariski (or, equivalently, \'etale or fppf) sheafification of the functor
 \begin{equation*}\label{E:func-Jbar}
\bJbar_X^* : \{{\rm Schemes}/k\}  \to \{{\rm Sets}\}
\end{equation*}
which associates to a $k$-scheme $T$ the set of isomorphism classes of $T$-flat, coherent sheaves on $X\times _k T$
whose fibers over $T$ are simple rank-$1$ torsion-free sheaves.
The fact that $\bJbar_X$ represents the  Zariski  sheafification of the functor $\bJbar_X^* $ amounts to the existence of a coherent sheaf $\I$ on $X\times \bJbar_X$, flat over $\bJbar_X$,  such that for every $\F\in \bJbar_X^*(T)$ there exists a unique map $\alpha_{\F}:T\to \bJbar_X$ with the property that
$\F=(\id_X\times \alpha_{\F})^*(\I)\otimes \pi_2^*(N)$ for some $N\in \Pic(T)$, where $\pi_2:X\times T\to T$ is the projection onto the second factor.
The sheaf $\I$ is uniquely determined up  to tensor product with the pullback of an invertible sheaf on $\bJbar_X$ and it is called a \emph{universal sheaf}.
Moreover, there exists a $k$-smooth open subset $\Pic(X)=\bJ_X\subseteq \bJbar_X$, whose $k$-points parametrizes line bundles on $X$.
The restriction of a universal sheaf $\I$ to  $X\times \bJ_X$ is a line bundle that enjoys a similar universal property  with respect to  families of line bundles on $X$. 
A proof of the above results can be found in \cite[Fact 2.2]{MRV1}, where they are deduced from results of Murre-Oort, Altmann-Kleiman \cite{AK}, \cite{AK2} and Esteves \cite{est1}.

Since the Euler characteristic $\chi(I):=h^0(X,I)-h^1(X,I)$ of a sheaf $I$ on $X$ is constant under deformations, we get a decomposition
\begin{equation}\label{E:dec}
\begin{sis}
& \bJbar_X=\coprod_{\chi \in \Z} \bJbar_X^{\chi},\\
& \bJ_X=\coprod_{\chi\in \Z} \bJ_X^{\chi}=\coprod_{\chi\in \Z} \Pic^{\chi+p_a(X)-1}(X),\\
\end{sis}
\end{equation}
where $\bJbar_X^{\chi}$ (resp. $\bJ_X^{\chi}$) denotes the open and closed subscheme of $\bJbar_X$ (resp. $\bJ_X$) parametrizing simple rank-1 torsion-free sheaves $I$ (resp. line bundles $L$) such that $\chi(I)=\chi$ (resp. $\chi(L)=\chi$ or equivalently degree $\deg(L)=\chi+p_a(X)-1$). We will sometimes refer to the degree of a rank-$1$ torsion-free sheaf $I$, which is defined by $\deg I:=\chi(I)+p_a(X)-1$. 

If $X$ has planar singularities, then $\bJbar_X$ has the following properties.

\begin{fact}\label{F:propJbig}
Let $X$ be a connected reduced curve with planar singularities. Then
\begin{enumerate}[(i)]
\item \label{F:propJbig1} $\bJbar_X$ is a reduced scheme with locally complete intersection singularities.
\item \label{F:propJbig2} $\bJ_X$ is the smooth locus of $\bJbar_X$. In particular, $\bJ_X$ is dense in $\bJbar_X$.
\end{enumerate}
\end{fact}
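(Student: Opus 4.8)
The plan is to prove both parts by a local analysis of the deformation theory of $\bJbar_X$ at an arbitrary point $[I]$, reducing everything to a statement about deformations of a torsion-free module over a planar singularity, which I would then control via matrix factorizations on a smooth surface germ.

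First I would recall that the tangent space to $\bJbar_X$ at $[I]$ is $\Ext^1_X(I,I)$ and that obstructions lie in $\Ext^2_X(I,I)$. Since $X$ is a curve and $I$ is locally free away from $\Xsing$, the local-to-global spectral sequence $H^p(X,\EExt^q_X(I,I))\Rightarrow\Ext^{p+q}_X(I,I)$ forces $\Ext^2_X(I,I)\cong\bigoplus_{p\in\Xsing}\Ext^2_{\wh\O_{X,p}}(I_p,I_p)$ (because $H^2(\SHom(I,I))=0$ and $\EExt^1_X(I,I)$ is a skyscraper), and similarly $\Ext^1_X(I,I)\cong H^1(X,\SHom(I,I))\oplus\bigoplus_p\Ext^1_{\wh\O_{X,p}}(I_p,I_p)$. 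Thus the formal deformation problem decouples: $\wh\O_{\bJbar_X,[I]}$ is a smooth factor (from the unobstructed $H^1(X,\SHom(I,I))$, i.e.\ the Picard-type directions) completed-tensored with the local versal deformation rings $\mathcal R_p$ of the modules $I_p$, and it suffices to show that each $\mathcal R_p$ is l.c.i., reduced, and smooth if and only if $I_p$ is free.

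For the local statement I would exploit that a planar singularity is a hypersurface, $\wh\O_{X,p}=S/(f)$ with $S=k[[x,y]]$ regular. Viewing $M:=I_p$ as an $S$-module, it is pure of dimension $1$ with $\depth_S M=1$, so by Auslander--Buchsbaum $\hdim_S M=1$ and $M$ has a square presentation $0\to S^n\xrightarrow{\Phi}S^n\to M\to 0$ with $\det\Phi=f$ up to a unit (a matrix factorization). As $\Ext^{\geq 2}_S(M,M)=0$, the deformations of $M$ as an $S$-module — equivalently of the matrix $\Phi$ — are unobstructed, so the moduli space $\mathcal N_p$ of pure one-dimensional $S$-sheaves deforming $M$ is smooth. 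The Fitting-ideal map $\mathcal N_p\to T_p$ sending $M$ to its scheme-theoretic support $(\det\Phi)$ lands in the (smooth) base $T_p$ of deformations of the singularity $f$, and its fibre over $[f]$ is exactly $\mathcal R_p$. Using the standard bound $\dim\bJbar_X^d\le p_a(X)$ together with the generic fibre dimension, $\mathcal N_p\to T_p$ is flat by miracle flatness, so its fibres, and in particular $\mathcal R_p$, are l.c.i. This proves that $\bJbar_X$ is l.c.i., hence Cohen--Macaulay. For reducedness, Cohen--Macaulay gives $S_1$, while generic reducedness follows from the density of line bundles (which I would establish by deforming $\Phi$ inside $\{\det=f\}$ to a free module: line bundles are the generic points of $\mathcal R_p$, the non-free locus being a proper closed subset). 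For part (ii): at a line bundle $\EExt^{\geq1}(I,I)=0$ gives $\Ext^2_X(I,I)=0$, so $[I]$ is a smooth point of dimension $h^1(X,\O_X)=p_a(X)$; conversely, if $I_p$ is not free then $M$ has infinite projective dimension over the hypersurface $S/(f)$, so Eisenbud's $2$-periodicity yields $\Ext^2_{S/(f)}(M,M)\neq0$, and a local computation (for a node $\dim_k\Ext^1=2>1=\dim\mathcal R_p$) shows $\dim_k\Ext^1_X(I,I)=p_a(X)+c(I)$ with $c(I)>0$; as $\bJbar_X$ is reduced of pure dimension $p_a(X)$, this makes $[I]$ singular. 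Hence the regular locus is exactly $\bJ_X$, which, being open and smooth, is dense.

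The main obstacle is the heart of the third paragraph: proving that the matrix-factorization moduli $\mathcal N_p$ is smooth and that the support map $\mathcal N_p\to T_p$ is flat with l.c.i.\ fibres. It is essential to argue through this smooth-family-plus-flatness mechanism rather than a naive Euler-characteristic count, since $\dim\Ext^1_X(I,I)-\dim\Ext^2_X(I,I)$ does \emph{not} compute the dimension — already for a node $\dim\Ext^1=\dim\Ext^2=2$ while $\mathcal R_p$ is a one-dimensional node — and it is precisely the $2$-periodicity of $\Ext$ over the hypersurface that inflates the obstruction space while keeping the deformation space l.c.i.\ of the correct dimension.
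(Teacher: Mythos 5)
The paper does not reprove this Fact: it cites \cite[Thm. 2.3]{MRV}, whose argument transports the known properties of $\Hilb^{n}_X$ --- reduced, l.c.i., pure of dimension $n$, with smooth locus $\Hilbl^n_X$ and $\Hilbr^n_X$ dense, due to Altman--Iarrobino--Kleiman and Brian\c con--Granger--Speder and ultimately resting on Brian\c con's irreducibility of $\Hilb^m(k[[x,y]])$ --- along the local Abel maps $A_M\colon V\to U$ of Proposition \ref{P:Abel-sur}, which are smooth and surjective with $A_M^{-1}(\bJ_X)=\Hilbl^n_X\cap V$; being reduced, being l.c.i.\ and the location of the smooth locus all descend along a smooth surjective morphism. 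Your route --- decoupling $\Def(I)$ into the local functors $\Def(I_p)$, realizing each as the fibre of the unobstructed (since $\Ext^2_S(M,M)=0$) deformation space of $M$ over the ambient surface germ mapping to the deformations of $f$, then invoking miracle flatness --- is the Fantechi--G\"ottsche--van Straten mechanism that the paper uses elsewhere (Fact \ref{F:smoothuni}), so it is a genuinely different and in principle viable path to the l.c.i.\ statement.

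As written, however, it has three real gaps. (1) Density of $\bJ_X$ (hence generic reducedness and the ``in particular'' of (ii)) is justified by one clause, ``deforming $\Phi$ inside $\{\det=f\}$ to a free module.'' That every component of $\Def_{S/(f)}(M)$ contains free modules is essentially the irreducibility theorem of Altman--Iarrobino--Kleiman and Rego; there is no known elementary matrix-factorization proof, and the standard one goes through Brian\c con and the punctual Hilbert scheme --- exactly the input you are trying to bypass. (2) Miracle flatness requires \emph{all} fibres of $\mathcal N_p\to T_p$ to have the same dimension; you invoke a bound only for the central fibre, and that bound ($\dim\bJbar_X^d\le p_a(X)$, or locally $\dim\Def_{S/(g)}(M)\le\delta(g)$ for every nearby $g$) is itself a consequence of Brian\c con plus the Abel-map fibre count, not a free ``standard'' fact. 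Worse, $\Ext^1_S(M,M)$ is infinite dimensional (already $\Ext^1_S(S/(x),S/(x))=k[[y]]$ for the node), so your $\mathcal N_p$ has no Schlessinger hull and miracle flatness cannot be applied to it verbatim; one must replace it by the finite-dimensional deformation space of the pair (singularity, module), or by the global family $\bJbar_{\X}\to\Spec R_X$. (3) For the converse half of (ii), $\Ext^2_{S/(f)}(M,M)\neq0$ only says the obstruction space is nonzero, not that the point is singular; what is needed is $\dim_k\Ext^1_{S/(f)}(M,M)>\dim\Def_{S/(f)}(M)$ for every non-free $M$, which you verify only for the node. Each gap is fillable, but filling (1) and (2) honestly leads back to the Hilbert-scheme facts on which the paper's own (cited) proof is built.
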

\begin{proof}
See \cite[Thm. 2.3]{MRV1}.
\end{proof}

For any integer $\chi\in \bbZ$, the scheme $\bJbar_X^{\chi}$ is not of finite type nor separated over $k$  if $X$ is not irreducible. However, it can be covered by
open subsets that are proper (and even projective) over $k$: the fine compactified Jacobians of $X$. The fine compactified Jacobians  depend on the choice of a general polarization, whose definition is as
follows  (using the notations of \cite{MRV1}).

\begin{defi}\label{pola-def}
Let $X$ be a connected reduced curve. 
\begin{enumerate}
\item A \emph{polarization} on a connected curve $X$ is a tuple of rational numbers $\un q=\{\un q_{C_i}\}$, one for each irreducible component $C_i$ of $X$, such that $|\un q|:=\sum_i \un q_{C_i}\in \Z$.
We call $|\un q|$ the total degree of $\un q$. Given any subcurve $Y \subseteq X$, we set $\un{q}_Y:=\sum_j \un{q}_{C_j}$ where the sum runs
over all the irreducible components $C_j$ of $Y$.
\item A polarization $\un q$ is called \emph{integral} at a subcurve $Y\subseteq X$ if
$\un q_Z \in \Z$ for any connected component $Z$ of $Y$ and of $Y^c$. A polarization is called
\emph{general} if it is not integral at any non-trivial subcurve $Y\subset X$.
\end{enumerate}
\end{defi}





The choice of a polarization on $X$ allows to define the concepts of stability and semistability. 

\begin{defi}\label{sheaf-ss-qs}
Let $\un q$ be a polarization on $X$ and let $I$ be a torsion-free rank-1  sheaf on $X$ of Euler characteristic $\chi(I)=|\un q|$ (not necessarily simple).
We say that $I$ is \emph{(semi)stable} with respect to $\un q$, or simply $\un q$-(semi)stable, if
for every non-trivial subcurve $Y\subset X$, we have that
\begin{equation}\label{multdeg-sh1}
\chi(I_Y)\geq \un q_Y \: \: (\text{resp. } >)
\end{equation}
where $I_Y$ is the quotient of the restriction $I_{|Y}$ modulo its biggest zero-dimensional subsheaf (or, in other words, $I_Y$ is the biggest 
 torsion-free quotient of $I_{|Y}$).
\end{defi}


Given a polarization $\un q$ on $X$, we denote by $\ov{J}^{ss}_X(\un q)$ (resp. $\ov{J}^{s}_X(\un q)$) the subscheme of
$\bJbar_X$ parametrizing simple rank-1 torsion-free sheaves $I$ on $X$ which are $\un q$-semistable (resp. $\un q$-stable).
By \cite[Prop. 34]{est1}, the inclusions
$$\ov{J}^{s}_X(\un q)\subseteq \ov{J}^{ss}_X(\un q)\subset \bJbar_X$$
are open.

\begin{fact}[Esteves]\label{F:Este-Jac}
Let $X$ be a connected reduced curve.
\noindent
\begin{enumerate}[(i)]
\item \label{F:Este-Jac2} If $\un q$ is general then $\ov{J}^{ss}_X(\un q)=\ov{J}^{s}_X(\un q)$ is a projective scheme over $k$ (not necessarily reduced).
\item \label{F:Este-Jac3} $\displaystyle \bJbar_X=\bigcup_{{\un q} \text{ general}} \ov{J}_X^s(\un q).$
\end{enumerate}
\end{fact}
\begin{proof}
See \cite[Fact 2.19]{MRV1} and the references therein. 
\end{proof}

If $\un q$ is general, we set $\ov{J}_X(\un q):=\ov{J}^{ss}_X(\un q)=\ov{J}^{s}_X(\un q)$ and we call it the
\emph{fine compactified Jacobian} with respect to the polarization $\un q$. We denote by $J_X(\un q)$ the open subset of $\ov{J}_X(\un q)$ parametrizing
line bundles on $X$. Note that $J_X(\un q)$ is isomorphic to the disjoint union of a certain number of copies of the generalized Jacobian $J(X)=\Pic^{\un 0}(X)$ of $X$.

If $X$ has planar singularities, then any fine compactified Jacobian of $X$ enjoys the following properties.

\begin{fact}\label{F:prop-Jac}
Let $X$ be a connected reduced curve with planar singularities and $\un q$ a general polarization on $X$. Then
\begin{enumerate}[(i)]
\item \label{F:prop-Jac1} $\ov{J}_X(\un q)$ is a connected reduced scheme with locally complete intersection singularities and trivial dualizing sheaf. 
\item \label{F:prop-Jac2} The smooth locus of $\ov{J}_X(\un q)$ coincides with the open subset $J_X(\un q)\subseteq \ov{J}_X(\un q)$ parametrizing line bundles; in particular $J_X(\un q)$ is dense in $\ov{J}_X(\un q)$ and of pure dimension equal to $p_a(X)$. 
\item \label{F:prop-Jac3} $J_X(\un q)$ is the disjoint union of a number of copies of $J(X)$ and such a number  is independent of the chosen polarization $\un q$ and it is denoted by $c(X)$.
In particular, all the fine compactified Jacobian of $X$ have $c(X)$ irreducible components, all of  dimension equal to
the arithmetic genus $p_a(X)$ of $X$, and they  are all birational among them.
\end{enumerate}
\end{fact}
\begin{proof}
See \cite[Thm. A]{MRV1}.
\end{proof}

In \cite[\S 5.1]{MRV1},  we prove a formula for the number $c(X)$ (which is called the complexity of $X$) in terms of the combinatorics of the curve $X$. The above properties depends heavily on the fact that the curve $X$ has planar singularities and indeed we expect that many of the above properties are false without this assumption (see the discussion in \cite[Rmk. 2.7]{MRV1}).





\subsection{Universal fine compactified Jacobians} \label{S:defo}


The aim of this subsection is to introduce and describe universal fine compactified Jacobians following the presentation given in \cite[\S 4, \S 5]{MRV1} and \cite[\S 3]{MRV2}.

Consider the effective semiuniversal deformation of a reduced curve $X$ (in the sense of \cite{Ser})
\begin{equation}\label{E:eff-fam}
\xymatrix{
X \ar[d]\ar@{^{(}->}[r]\ar@{}[dr]|{\square}&  \X\ar[d]^{\pi}\\
o:=[\m_X] \ar@{^{(}->}[r] &  \Spec R_X,
}
\end{equation}
where $R_X$ is a Noetherian complete local $k$-algebra with maximal ideal $\m_{X}$ and residue field $k$. Note that if $X$ has locally complete intersection singularities (e.g. if $X$ has planar singularities), 
then $\Spec R_X$ is formally smooth or, equivalently, $R_X$ is a power series ring over $k$ (see e.g. \cite[Fact 4.1]{MRV1} and the references therein). For any (schematic) point $s\in \Spec R_X$, we will denote by 
$\X_s:=\pi^{-1}(s)$  the fiber of $\pi$ over $s$ and by $\X_{\ov s}:=\X_{s}\otimes_{k(s)} \ov{k(s)}$ an associated geometric fiber.
For later use, we recall the following 

\begin{lemma}\label{L:openU}
 Let $U$ be the open subset of $\Spec R_X$ consisting of all the  points $s\in \Spec R_X$ such that
the fiber $\X_{\ov s}$ of the universal family $\pi:\X\to \Spec R_X$ is smooth or it has a unique singular point which is a
node. If $X$ has locally planar singularities, then the codimension of the complement of $U$ inside $\Spec R_X$ is at least two.
\end{lemma}
\begin{proof}
See \cite[Lemma 4.3]{MRV1}. 
\end{proof}


The scheme $\Spec R_X$ admits two stratifications into closed subsets according to either the arithmetic genus or the geometric genus of the
normalization of the geometric fibers of the family $\pi$. More precisely, using the notation introduced in \ref{N:curves}, consider the two functions
\begin{equation}
\begin{aligned}
p_a^{\nu}: \Spec R_X & \longrightarrow \bbN,\\
 s & \mapsto p_a^{\nu}(\X_{\ov s}):=p_a(\X_{\ov s}^{\nu}),\\
\end{aligned}
\hspace{2cm}
\begin{aligned}
g^{\nu}: \Spec R_X & \longrightarrow \bbN,\\
s & \mapsto g^{\nu}(\X_{\ov s})=g^{\nu}(\X_{\ov s}^{\nu}).
\end{aligned}
\end{equation}
Since  the number of connected components of $\X_{\ov s}^{\nu}$ is the number $\gamma(\X_{\ov s})$
of irreducible components of $\X_{\ov s}$, we have the relation
\begin{equation}\label{E:geo-ari}
p_a^{\nu}(\X_{\ov s})=g^{\nu}(\X_{\ov s})-\gamma(\X_{\ov s})+1\leq g^{\nu}(\X_{\ov s}).
\end{equation}
The functions $p_a^{\nu}$ and $g^{\nu}$ are lower semi-continuous (see \cite[Lemma 3.2]{MRV2}). 
Moreover, using \eqref{E:geo-ari} and the fact that the arithmetic genus $p_a$ stays constant in the family $\pi$ because of flatness, we get
that
$$
p_a(X^{\nu})=p_a^{\nu}(X)\leq p_a^{\nu}(\X_{\ov s})\leq g^{\nu}(\X_{\ov s})\leq p_a(\X_{\ov s})=p_a(X).
$$
Therefore for any $p_a(X^{\nu})\leq l\leq p_a(X)$ we have two closed subsets of $\Spec R_X$:
\begin{equation}\label{E:strata-RX}
(\Spec R_X)^{g^{\nu}\leq l}:=\{s\in \Spec R_X \: : \: g^{\nu}(\X_{\ov s})\leq l \}\subseteq (\Spec R_X)^{p_a^{\nu}\leq l}:=\{s\in \Spec R_X \: : \: p_a^{\nu}(\X_{\ov s})\leq l \}.
\end{equation}
If $X$ has planar singularities, then  the stratification (called \emph{equigeneric stratification}) by the latter closed subsets has the following remarkable properties.

\begin{fact}\label{F:Diaz-Har}
Assume that $X$ is a reduced curve with planar singularities. Then, for any $p_a(X^{\nu})\leq l\leq p_a(X)$, we have that:
\begin{enumerate}[(i)]
\item \label{F:Diaz-Harris1} The closed subset
$(\Spec R_X)^{p_a^{\nu}\leq l}\subset \Spec R_X$ has codimension  at least  $p_a(X)-l$. Hence, the same is true for the closed subset $(\Spec R_X)^{g^{\nu}\leq l}\subseteq \Spec R_X$. 
\item \label{F:Diaz-Harris2} Each generic point $s$ of $(\Spec R_X)^{p_a^{\nu}\leq l}$ is such that $\X_{\ov s}$ is a nodal curve.
\end{enumerate}
\end{fact}
\begin{proof}
See \cite[Thm. 3.3]{MRV2}. 
\end{proof}

The schemes $\bJ_X\subseteq \bJbar_X$ of \S\ref{S:comp-Jac} can be deformed over $\Spec R_X$. More precisely, there a scheme $\bJbar_{\X}$ endowed with a morphism
$u:\bJbar_{\X}\to \Spec R_X$, which is locally of finite type and universally closed, which represents
the Zariski (or, equivalently, \'etale of fppf) sheafification of the functor
$$\bJbar_{\X}^*:\{\Spec R_X-\text{schemes} \} \longrightarrow \{ \text{Sets} \}$$
which sends a scheme $T\to \Spec R_X$ to the set of isomorphism classes of $T$-flat, coherent sheaves on
$\X_T:=T\times_{\Spec R_X} \X$ whose fibers over $T$ are simple rank-1  torsion-free sheaves. 
The fact that $\bJbar_{\X}$ represents the  Zariski  sheafification of the functor $\bJbar_{\X}^* $ amounts to the existence of a coherent sheaf $\wh{\I}$ on $\X\times_{\Spec R_X} \bJbar_{\X}$, flat over
$\bJbar_{\X}$,  such that for every $\F\in \bJbar_{\X}^*(T)$ there exists a unique $\Spec R_X$-map $\alpha_{\F}:T\to \bJbar_{\X}$ with the property that $\F=(\id_{\X}\times \alpha_{\F})^*(\wh{\I})\otimes \pi_2^*(N)$ for some $N\in \Pic(T)$, where $\pi_2:\X\times_{\Spec R_X} T\to T$ is the projection onto the second factor.
The sheaf $\wh{\I}$ is uniquely determined up  to tensor product with the pullback of an invertible sheaf on $\bJbar_{\X}$ and it is called a \emph{universal sheaf} on $\bJbar_{\X}$.
Moreover, there exists an open subscheme $\bJ_{\X}\subseteq \bJbar_{\X}$, smooth over $\Spec R_X$,  
that parametrizes families of line bundles on the family $\pi:\X\to \Spec R_X$. 
Furthermore, the geometric fiber of $\bJbar_{\X}$ (resp. of $\bJ_{\X}$) over $s\in \Spec R_X$ is isomorphic to $\bJbar_{\X_{\ov s}}$ (resp. $\bJ_{\X_{\ov s}}$) and the pull-back of $\wh{\I}$ to 
$\X_{\ov s}\times \bJbar_{\X_{\ov s}}$ is a universal sheaf for $\bJbar_{\X_{\ov s}}$. In particular, the fiber of $\bJbar_{\X}$ (resp. of $\bJ_{\X}$) over the closed point $o\in \Spec R_X$ is isomorphic to $\bJbar_X$ (resp. $\bJ_X$) and the restriction  of $\wh{\I}$ to $X\times \bJbar_X$ is equal to a universal sheaf as in \S\ref{S:comp-Jac}.
A proof of the above results can be found in \cite[Fact 4.1]{MRV1}, where they are deduced from results of Altmann-Kleiman \cite{AK}, \cite{AK2} and Esteves \cite{est1}.

\vspace{0.1cm}

We now introduce universal fine compactified Jacobians, which are certain open subsets of $\bJbar_{\X}$ that are projective over $\Spec R_X$ and whose central fiber
is a fine compactified Jacobian of $X$.
The universal fine compactified Jacobian will depend on  a general polarization $\un q$ on $X$ as in Definition \ref{pola-def}. Indeed, the polarization $\un q$ induces
a polarization on each geometric fiber of the effective semiuniversal deformation family
$\pi:\X\to \Spec R_X$, in the following way. For any (schematic) point $s\in \Spec R_X$, denote by $\psi_s:\X_{\ov s} \to \X_s$ the natural base change map.
There is a natural map
\begin{equation}\label{E:map-subcurve}
\begin{aligned}
\Sigma_s:\{\text{Subcurves of } \X_{\ov s}\} & \longrightarrow \{\text{Subcurves of } X\}\\
\X_{\ov s}\supseteq Z & \mapsto \ov{\psi_s(Z)}\cap X\subseteq X,
\end{aligned}
\end{equation}
where $\ov{\psi_s(Z)}$ is the Zariski closure inside $\X$ of the subcurve $\psi_s(Z)\subseteq \X_s$ and the intersection  $\ov{\psi_s(Z)}\cap X$ is endowed with the reduced scheme structure 
(see \cite[\S 5]{MRV1} for more details).
Using the above map $\Sigma_s$,  we can define a polarization $\un q^s$ on $\X_{\ov s}$ starting from a polarization $\un q$ on $X$ by the rule:
\begin{equation}\label{E:pola-Xs}
\un q^s_Z:=\un q_{\Sigma_s(Z)} \: \: \text{ for any subcurve } Z\subseteq \X_{\ov s}. 
\end{equation}
It turns out that if $\un q$ is a general polarization on $X$, then $\un q^s$ is a general polarization on $\X_{\ov s}$ for any point $s\in \Spec R_X$, see \cite[Lemma-Definition 5.3]{MRV1}.

Given a general polarization $\un q$ on $X$, it is proved in \cite[Thm. 5.4]{MRV1} that  there exists an open subscheme $\J_{\X}(\un q)\subseteq \bJbar_{\X}$, called 
 the \emph{universal fine compactified Jacobian} of $X$ with respect to the polarization $\un q$, 
which is projective over $\Spec R_X$ and such that the geometric fiber of  $u:\J_{\X}(\un q)\to \Spec R_X$ over a point
$s\in \Spec R_X$ is isomorphic to $\J_{\X_{\ov s}}(\un q^s)$. In particular, the fiber of $\J_{\X}(\un q)\to \Spec R_X$
over the closed point $o\in \Spec R_X$ is isomorphic to $\J_X(\un q)$. We denote by $J_{\X}(\un q)$ the open subset of $\J_{\X}(\un q)$ parametrizing line bundles,
i.e. $J_{\X}(\un q)=\J_{\X}(\un q)\cap \bJ_{\X}\subseteq \bJbar_{\X}$.




If the curve $X$ has planar singularities, then the universal fine compactified Jacobians of $X$ have several nice properties that we collect in the following statement.

\begin{fact}\label{F:Punivfine}
Assume that $X$ is a reduced and connected curve with planar singularities and let $\un q$ be a general polarization on $X$.
Then we have:
\begin{enumerate}[(i)]
\item \label{F:Punivfine1} The scheme $\J_{\X}(\un q)$ is smooth and irreducible.
\item \label{F:Punivfine2} The surjective map $u:\J_{\X}(\un q)\to \Spec R_X$ is projective and flat of relative dimension $p_a(X)$.
\item \label{F:Punivfine3} The smooth locus of $u$ is $J_{\X}(\un q)$.
\end{enumerate}
\end{fact}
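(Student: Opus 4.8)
The goal is to prove Fact \ref{F:Punivfine}, which asserts three properties of the universal fine compactified Jacobian $\J_{\X}(\un q)$ when $X$ has planar singularities: smoothness and irreducibility (i), that $u$ is projective and flat of relative dimension $p_a(X)$ (ii), and that the smooth locus of $u$ is $J_{\X}(\un q)$ (iii).

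The plan is to deduce these properties from the facts already established earlier in this subsection, exploiting the open-immersion $\J_{\X}(\un q)\subseteq \bJbar_{\X}$. First I would prove part (i). Smoothness of $\J_{\X}(\un q)$ is immediate: by Fact \ref{F:smoothuni} the ambient scheme $\bJbar_{\X}$ is smooth, and an open subscheme of a smooth scheme is smooth; since $\J_{\X}(\un q)$ is open in $\bJbar_{\X}$ by Fact \ref{F:univ-fine}, we are done. For irreducibility, I would combine the fiber description with the structure of the central fiber. The projection $u:\J_{\X}(\un q)\to \Spec R_X$ is proper (indeed projective) with connected fibers --- each fiber $\J_{\X_s}(\un q^s)$ is connected by Fact \ref{F:prop-fineJac}\eqref{F:prop-fineJac3} applied to the (planar) fiber curve $\X_s$ --- over the irreducible, in fact smooth, base $\Spec R_X$ (Fact \ref{F:smooth-RX}). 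A smooth scheme is a disjoint union of its irreducible components, but a scheme with connected fibers that is flat and proper over an irreducible base, together with irreducibility of the central fiber, forces global irreducibility; alternatively, since $\J_{\X}(\un q)$ is smooth hence normal, and its generic fiber is irreducible (it is a compactified Jacobian of a smooth or nearly-smooth curve over the generic point, by the equigeneric stratification of Fact \ref{F:Diaz-Har}), connectedness of all fibers propagates irreducibility to the total space.

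Next, for part (ii): projectivity of $u$ over $\Spec R_X$ is exactly the content of Fact \ref{F:univ-fine}. Flatness is the key point. Since $\J_{\X}(\un q)$ is smooth (hence Cohen-Macaulay) by part (i), and $\Spec R_X$ is regular (Fact \ref{F:smooth-RX}), I would apply the ``miracle flatness'' criterion: a morphism from a Cohen-Macaulay scheme to a regular scheme is flat provided all fibers have the same dimension equal to $\dim(\text{source})-\dim(\text{base})$. The fibers $\J_{\X_s}(\un q^s)$ all have pure dimension equal to $p_a(\X_s)=p_a(X)$ by Fact \ref{F:prop-fineJac}\eqref{F:prop-fineJac5} (the arithmetic genus being constant in the flat family $\pi$), so the fiber dimension is the constant $p_a(X)$; combined with $\dim \J_{\X}(\un q)=\dim \Spec R_X + p_a(X)$, miracle flatness yields flatness and the relative dimension $p_a(X)$ simultaneously. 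Surjectivity of $u$ follows since every fiber is nonempty.

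Finally, part (iii). The smooth locus of the morphism $u$ consists of the points where $u$ is smooth; since $u$ is flat (by (ii)) with total space smooth, a point lies in the smooth locus of $u$ precisely when the fiber through it is smooth at that point. By Fact \ref{F:prop-fineJac}\eqref{F:prop-fineJac2} applied to each fiber curve $\X_s$, the smooth locus of the fiber $\J_{\X_s}(\un q^s)$ is exactly $J_{\X_s}(\un q^s)$, the locus parametrizing line bundles. Taking the union over all $s$ identifies the smooth locus of $u$ with $J_{\X}(\un q)=\J_{\X}(\un q)\cap \bJ_{\X}$. I expect the main obstacle to be the bookkeeping in part (i): carefully justifying global irreducibility from fiberwise connectedness requires either the miracle-flatness output of (ii) (so one might reorder and prove flatness first) or an appeal to Zariski's connectedness/the structure of the equigeneric stratification to identify the generic fiber, so the cleanest route is probably to establish smoothness and flatness first and then derive irreducibility from connectedness of fibers plus normality of the smooth total space.
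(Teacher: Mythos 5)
The paper itself offers no argument here: Fact \ref{F:Punivfine} is quoted with proof deferred entirely to \cite[Thm. 4.3]{MRV}. Your reconstruction from the surrounding facts is sound and is essentially the intended argument: openness in the smooth scheme $\bJbar_{\X}$ for smoothness, equidimensionality of the fibers plus regularity of $R_X$ for miracle flatness, and connectedness of the fibers for irreducibility. Two points deserve more care than you give them. First, every time you apply Fact \ref{F:prop-fineJac} (or Fact \ref{F:Hilb-cur}-type statements) to a fiber $\X_s$ with $s$ not the closed point, you are implicitly asserting that $\X_s$ is itself a reduced, connected curve with planar singularities; this is true, but only because these conditions are open on the base of a proper flat family and $\Spec R_X$ is local, so the locus where they hold, containing the closed point, is everything --- this should be said explicitly, as it underlies the whole fiberwise strategy. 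Second, your first formulation of the irreducibility step invokes flatness before it has been proved; the clean order is: smoothness (open in $\bJbar_{\X}$), then connectedness of the total space (properness plus connected fibers over the connected local base $\Spec R_X$, which needs no flatness), hence irreducibility since a connected smooth scheme is irreducible, then the dimension count $\dim\J_{\X}(\un q)=\dim\Spec R_X+p_a(X)$ from irreducibility and constancy of fiber dimension, and only then miracle flatness. With that reordering, which you yourself suggest at the end, the proof is complete.
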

\begin{proof}
See \cite[Thm. 5.5]{MRV1}.
\end{proof}

\section{Punctual Hilbert schemes} \label{S:Hilb}

The aim of this section is to recall some properties of the punctual Hilbert schemes (i.e. Hilbert schemes of points) on curves with planar singularities and on smooth surfaces, which will be
needed in Section \ref{S:Poinc}.

For any  projective scheme $Z$ and $n\geq 1$, let $\Hilb_Z^n$ be the (punctual) Hilbert scheme parametrizing $0$-dimensional subschemes $D\subset Z$ of length $n$, i.e. such that $k[D]:=\Gamma(D,\O_D)$ is a $k$-algebra of dimension $n$.
The Hilbert scheme $\Hilb_Z^n$ is endowed with a universal divisor $\D$ giving rise to the following diagram:
\begin{equation}\label{E:univ-div}
\xymatrix{
& \D \ar@{^{(}->}[r]\ar[dl]_{h}\ar[dr]^{f} & \Hilb_Z^n\times Z \\
\Hilb_Z^n & & Z
}
\end{equation}
where the morphism $h$ is finite and flat of degree $n$. The sheaf $\A:=h_*\O_{\D}$ is a coherent sheaf of algebras on $\Hilb^n_Z$ which is locally free of rank $n$. The fiber of $\A$ over
$D\in \Hilb_Z^n$ is canonically isomorphic to the $k$-algebra $k[D]$ of regular functions on $D$.
We refer the reader to \cite[Chap. 5, 6]{FGA} for a detailed account of the theory of Hilbert schemes.

The punctual Hilbert scheme $\Hilb_Z^n$ contains a remarkable open subset $\Hilbc_Z^n\subseteq \Hilb_Z^n$, called the \emph{curvilinear Hilbert scheme} of $Z$, consisting of all the $0$-dimensional subschemes $D\in \Hilb_Z^n$ such that $Z$ can be embedded into a smooth curve, or equivalently such that
$$k[D]\cong \prod_{i}\frac{k[x]}{(x^{n_i})}.$$

In what follows, we will be concerned with the punctual Hilbert schemes of curves and surfaces.
Observe that if a curve $X$ is contained in a smooth surface $S$, then $X$ has planar singularities.
Indeed, the converse is also true due to the following result of Altman-Kleiman \cite{AK0}.

\begin{fact}[Altman-Kleiman]\label{F:cur-sur}
If $X$ is a connected projective reduced curve with planar singularities, then there exists a smooth projective integral surface $S$ such that $X\subset S$.
\end{fact}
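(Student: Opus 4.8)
The plan is to embed $X$ into projective space, cut the ambient dimension down to three by a generic linear projection that keeps $X$ embedded, and then exhibit $S$ as a general hypersurface of large degree through the resulting space curve. Concretely, fix any closed embedding $X\hookrightarrow \PP^N$; I will reduce to $N=3$ and then produce $S\in |\cI_X(d)|$ smooth for $d\gg 0$. The hypothesis that $X$ has planar singularities enters in two essential places: it bounds the tangent variety during the projection step, and it guarantees that the conormal of $X$ in $\PP^3$ is nowhere zero, which is exactly what is needed to make a surface through $X$ smooth along $X$.

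For the projection step, recall that at $p\in X$ the Zariski tangent space $T_pX$ has dimension $1$ if $p\in \Xsm$ and dimension $2$ if $p\in \Xsing$ (planarity), and that $\Xsing$ is finite. Hence the embedded projective tangent spaces $\mathbb{T}_pX\subseteq \PP^N$ have dimension at most $2$, and their union $\operatorname{Tan}(X)$ has dimension at most $2$ (at most $1+1$ over the one-dimensional locus $\Xsm$, and a finite union of planes over $\Xsing$). The secant variety $\operatorname{Sec}(X)$ has dimension at most $2\dim X+1=3$. Therefore, as long as $N\ge 4$, a general point $q\in \PP^N$ lies off $\operatorname{Sec}(X)\cup\operatorname{Tan}(X)$, and projection from $q$ restricts to a closed immersion of $X$ into $\PP^{N-1}$ (injective on points, since $q$ is off the secants, and unramified, since $q$ is off the tangent spaces). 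Iterating, I may assume $X\subset \PP^3$; as planarity is intrinsic to $X$, the image is again a curve with planar singularities.

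It remains to find a smooth integral surface in $\PP^3$ containing $X$. Put $V:=H^0(\PP^3,\cI_X(d))$ for $d\gg 0$, chosen so that $\cI_X(d)$ is globally generated (whence the base scheme of $|\cI_X(d)|$ is $X$) and, uniformly in $p\in X$, the map $V\to \cI_{X,p}/(\cI_{X,p}\cap \mathfrak{m}_p^2)$ onto the conormal image is surjective; both hold by Serre vanishing. By Bertini a general $S=V(F)$ is smooth away from its base locus $X$. To control $S$ along $X$, form the incidence variety
\[
\Sigma:=\{(p,[F])\in X\times \PP(V)\ :\ F\in \mathfrak{m}_p^2\},
\]
which records where $S$ fails to be smooth. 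Over $p\in \Xsm$ the conormal of $X$ in $\PP^3$ has rank $2$, so the fibre of $\Sigma\to X$ has codimension $2$ in $\PP(V)$; over $p\in \Xsing$ planarity gives conormal rank $1$ (nonzero), so the fibre has codimension $1$. Since $\dim \Xsm=1$ and $\dim \Xsing=0$, in both cases $\dim\Sigma\le \dim\PP(V)-1$, so $\Sigma\to \PP(V)$ is not dominant and a general $F$ makes $S$ smooth at every point of $X$ too. Such an $S$ is a smooth hypersurface in $\PP^3$, hence connected (from $0\to \O_{\PP^3}(-d)\to \O_{\PP^3}\to \O_S\to 0$ one reads off $h^0(\O_S)=1$), hence integral, and it contains $X$ by construction.

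The main obstacle is smoothness of $S$ precisely along $\Xsing$: there the conormal rank drops to $1$, so imposing $dF_p\ne 0$ is only a codimension-$1$ condition on $V$ and would not beat a one-parameter family of such points. The resolution is that planar singularities are isolated, so $\Xsing$ is zero-dimensional and the count still closes; moreover planarity is exactly what keeps the conormal rank positive, so that a smooth surface through $X$ exists at all (were the embedding dimension $3$ somewhere, every $F\in \cI_{X,p}$ would lie in $\mathfrak{m}_p^2$ and no such $S$ could exist).
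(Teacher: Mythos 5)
Your argument is correct, and it is worth noting that the paper offers no proof at all here: Fact \ref{F:cur-sur} is stated as a black box with a citation to Altman--Kleiman's Bertini theorem for hypersurface sections containing a subscheme \cite{AK0}. What you have written is essentially a self-contained proof of the special case of that theorem which is needed, and it follows the expected route: generic projection to $\PP^3$ (legitimate because the secant variety has dimension at most $3$ and the union of embedded tangent spaces has dimension at most $2$, planarity and the finiteness of $\Xsing$ giving the bound at singular points), followed by a general member of $|\cI_X(d)|$ for $d\gg 0$. Your incidence-variety count along $X$ is the heart of the matter and is exactly where planarity is used: the conormal image $\cI_{X,p}/(\cI_{X,p}\cap\m_p^2)$ has rank $2$ on the one-dimensional locus $\Xsm$ and rank $1$ on the zero-dimensional locus $\Xsing$, so $\Sigma$ fails to dominate $\PP(V)$ in both strata; and since this is a pure dimension count it works in any characteristic, which matters because the paper imposes no characteristic hypothesis at this point. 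Two small points deserve a word. First, the uniform surjectivity of $V\to \cI_{X,p}/(\cI_{X,p}\cap\m_p^2)$ is automatic once $\cI_X(d)$ is globally generated, because $\m_p\cI_{X,p}\subseteq \cI_{X,p}\cap\m_p^2$, so that quotient is a quotient of the fibre $\cI_{X,p}\otimes k(p)$; you may as well say this explicitly rather than invoking Serre vanishing twice. Second, for smoothness of $S$ away from $X$ you should not invoke generic smoothness (which can fail in positive characteristic) but rather the form of Bertini asserting that a general member of a linear system is smooth away from the base locus provided the system defines an immersion there; for $d\gg 0$ the system $|\cI_X(d)|$ does define a closed immersion of $\PP^3\setminus X$, so this is fine, but the distinction should be made. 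With those caveats your proof is complete and, unlike the paper's, does not outsource the statement.
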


If $X\subset S$ is as above, then we get a closed embedding
$$\Hilb_X^n\subseteq \Hilb_S^n.$$
In the next two subsections, we will review some of the properties of $\Hilb_S^n$ and of $\Hilb_X^n$, that we will need later on.

\subsection{Punctual Hilbert schemes of surfaces}\label{S:Hilb-sur}

Throughout this subsection, we fix a projective smooth integral surface $S$ over an algebraically closed field $k$.

The following properties of $\Hilb^n_S$ are due to J. Fogarty (see \cite[Thm. 7.2.3]{FGA} for a modern proof).

\begin{fact}[Fogarty] \label{F:Hilb-sur}
For any $n\in \bbN$ and any projective smooth connected surface $S$, the punctual Hilbert scheme $\Hilb^n_S$ is smooth
and irreducible of dimension $2n$.
\end{fact}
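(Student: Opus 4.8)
The plan is to deduce smoothness and pure dimension $2n$ from a tangent-space computation that is valid at \emph{every} point, and then to obtain irreducibility by proving that the locus of reduced subschemes is dense.

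First I would set up the deformation theory of the Hilbert functor: at a point $[Z]\in\Hilb^n_S$ with ideal sheaf $\I_Z\subset\O_S$ and structure sheaf $\O_Z=\O_S/\I_Z$, the Zariski tangent space is $\Hom_{\O_S}(\I_Z,\O_Z)$ and obstructions lie in $\Ext^1_{\O_S}(\I_Z,\O_Z)$. Since $Z$ is $0$-dimensional, the sheaves $\EExt^i_{\O_S}(\I_Z,\O_Z)$ are skyscrapers, so the local-to-global spectral sequence collapses to $\Ext^i_{\O_S}(\I_Z,\O_Z)=H^0(\EExt^i_{\O_S}(\I_Z,\O_Z))$ and the whole question localizes at each $p\in\supp Z$, where $R:=\wh\O_{S,p}$ is regular local of dimension $2$ and $I:=\I_{Z,p}$ is an $\m$-primary ideal of colength $\ell_p:=\dim_k R/I$. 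The key computation is $\dim_k\Hom_R(I,R/I)=2\ell_p$. I would get this from the exact sequence $0\to I\to R\to R/I\to 0$: applying $\Hom_R(-,R/I)$ gives $\Hom_R(I,R/I)\cong\Ext^1_R(R/I,R/I)$; then local duality over the regular (hence Gorenstein) ring $R$ gives $\Ext^i_R(M,M)\cong\Ext^{2-i}_R(M,M)^\vee$ (using $\omega_R\cong R$), and the vanishing of the Euler characteristic $\sum_i(-1)^i\dim_k\Ext^i_R(M,M)=0$ for the finite-length (hence torsion) module $M=R/I$ — which follows from the alternating sum of ranks in a finite free resolution being zero — yields $\dim_k\Ext^1_R(R/I,R/I)=2\dim_k\Hom_R(R/I,R/I)=2\ell_p$. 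Summing over $p$ gives $\dim_k T_{[Z]}\Hilb^n_S=2n$ for every $Z$.

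Granting this, the embedding dimension at each closed point is exactly $2n$, so every local ring of $\Hilb^n_S$ has dimension at most $2n$, and is regular precisely when its dimension equals $2n$. To pin down the dimension I would show that the open locus of reduced subschemes, which is isomorphic to $(S^{n}\setminus\text{diagonals})/\mathfrak{S}_n$ and hence smooth and irreducible of dimension $2n$, is dense in $\Hilb^n_S$. Density amounts to smoothability of every $0$-dimensional subscheme; I would reduce to the punctual case and invoke the irreducibility of the punctual Hilbert scheme of a smooth surface (Briançon), whose generic member is curvilinear, together with the elementary fact that a curvilinear subscheme is a flat limit of reduced ones (collide distinct points along a smooth curve germ). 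It then follows that $\Hilb^n_S$ is irreducible of dimension $2n$; for an irreducible finite-type $k$-scheme the local dimension at every closed point equals $2n$, so the tangent-space bound forces $\dim\O_{\Hilb^n_S,x}=\dim_k T_x=2n$, i.e.\ regularity, everywhere.

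The main obstacle is the uniform tangent-space equality $\dim_k T_{[Z]}=2n$: this is exactly where $\dim S=2$ enters, through the self-duality $\Ext^1_R(M,M)\cong\Ext^1_R(M,M)^\vee$ and the resulting identity $\dim_k\Ext^1_R(M,M)=2\dim_k\Hom_R(M,M)$ for finite-length modules over a regular local ring of dimension two; the analogous smoothness statement already fails for threefolds. I would also stress that the naive obstruction-theoretic estimate $\dim_{[Z]}\Hilb^n_S\ge\dim_k\Hom(\I_Z,\O_Z)-\dim_k\Ext^1(\I_Z,\O_Z)=2n-n=n$ (the local computation gives $\dim_k\Ext^1_{\O_S}(\I_Z,\O_Z)=n$) is too weak, so the correct dimension $2n$ does not come from obstruction theory but genuinely from the density (smoothability) argument; establishing that density via Briançon's theorem is therefore the second, more geometric, ingredient.
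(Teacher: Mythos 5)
The paper gives no proof of this statement: it is recorded as a classical Fact of Fogarty with a citation to \cite[Thm.\ 7.2.3]{FGA}, so there is no in-paper argument to compare against; your proof is correct and is essentially the standard modern one. Both pillars check out. For the tangent-space identity, the isomorphism $\Hom_R(I,R/I)\cong\Ext^1_R(R/I,R/I)$ does hold, but you should make explicit that the first map $\Hom_R(R/I,R/I)\to\Hom_R(R,R/I)$ in the long exact sequence is an isomorphism (both sides are $R/I$); the Euler characteristic $\sum_i(-1)^i\dim_k\Ext^i_R(M,M)$ equals $\rk(M)\cdot\len(M)=0$ because $M$ is perfect and torsion; and the duality $\Ext^i_R(M,M)\cong\Ext^{2-i}_R(M,M)^\vee$ is correct for finite-length $M$ over a two-dimensional regular local ring (compute $\Ext^i_R(M,M)$ from a finite free resolution, use that $\Ext^j_R(M,R)$ is the Matlis dual of $M$ concentrated in $j=2$, and that Matlis duality is exact), giving $h^1=2h^0=2\ell_p$. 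Your logic for deducing smoothness — uniform tangent dimension $2n$ plus irreducibility plus equidimensionality of an irreducible finite-type scheme — is sound, and you rightly note that the obstruction-theoretic bound $2n-n=n$ is useless, so all the geometric weight falls on the density of the reduced locus. The one structural remark concerns how you get that density: you invoke Brian\c con's irreducibility of the local punctual Hilbert scheme (the paper's Fact \ref{F:Hilb-Chow}). This is logically sound and non-circular, since Brian\c con's theorem is proved independently of Fogarty's, but it is historically reversed and rests on a result considerably deeper than the one being proved; Fogarty's original route instead establishes connectedness of $\Hilb^n_S$ by induction via the nested Hilbert schemes $\Hilb^{n-1,n}_S$ and obtains the dimension bound from there. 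Since the present paper quotes Brian\c con's theorem as an independent Fact, your shortcut is perfectly acceptable in context.
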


Let $\Sym^n(S)$ be the $n$-th symmetric product of $S$, i.e. $\Sym^n(S)=S^n/\Sigma_n$ where $\Sigma_n$ is the symmetric group on $n$ letters acting on the $n$-th product $S^n$ by permuting the factors.  The $n$-th symmetric product $\Sym^n(S)$ parametrizes $0$-cycles $\displaystyle \zeta=\sum_{p\in \supp \zeta} \zeta_p \cdot p$ on $S$ of length $n$. There is a surjective morphism, called the \emph{Hilbert-Chow morphism} (see \cite[Sec. 7.1]{FGA}), defined by
\begin{equation}\label{E:Hilb-Chow}
\begin{aligned}
\HC: \Hilb_S^n & \longrightarrow \Sym^n(S), \\
D & \mapsto \sum_{p\in S} l(\cO_{D,p})\cdot  p,
\end{aligned}
\end{equation}
where $l(\cO_{D,p})$ is the length of the Artinian ring $\cO_{D,p}$.

The fiber of the Hilbert-Chow morphism over a divisor $\sum_i n_i p_i\in \Sym^n S$  is isomorphic to
(see \cite[p. 820]{Iar0})
\begin{equation}\label{E:fiberHC}
\HC^{-1}\left(\sum_i n_i p_i\right)\cong \prod_i \Hilb^{n_i}(\wh{\O}_{S,p_i}),
\end{equation}
where, for any $m\geq 1$ and any $p\in S$, $\Hilb^{m}(\wh{\O}_{S,p}):=\Hilb^{m}(k[[x,y]])$ is the \emph{local Hilbert scheme} parametrizing ideals $I\subset k[[x,y]]$ of colength $m$, i.e. such that $\displaystyle \frac{k[[x,y]]}{I}$ is a $k$-algebra of dimension $m$. Denote by $\Hilbc^m(k[[x,y]])\subseteq \Hilb^{m}(k[[x,y]])$ the open subset
(called the \emph{curvilinear local Hilbert scheme}) parametrizing ideals $I\subset  k[[x,y]]$ such that
$\displaystyle \frac{k[[x,y]]}{I}\cong \frac{k[z]}{(z^m)}$.

The following result was proved  by J. Brian\c con \cite{Bri} (see also \cite{Iar} and \cite{Gra}).

\begin{fact}[Brian\c con]\label{F:Hilb-Chow}
For $m\geq 1$, the local Hilbert scheme $\Hilb^{m}(k[[x,y]])$ is irreducible of dimension $m-1$.
In particular, the curvilinear local Hilbert scheme $\Hilbc^m(k[[x,y]])\subseteq \Hilb^{m}(k[[x,y]])$ is an open dense
subset.
\end{fact}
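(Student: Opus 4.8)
The plan is to realise $H:=\Hilb^m(k[[x,y]])$, the scheme of $\m$-primary ideals $I\subset R:=k[[x,y]]$ with $\dim_k R/I=m$, as the closure of its curvilinear open locus $H^c:=\Hilbc^m(k[[x,y]])$, and to establish two things: that $H^c$ is irreducible of dimension $m-1$, and that $H^c$ is dense in $H$. Granting these, irreducibility and the dimension count follow immediately. Note first that $H^c$ is open in $H$: the condition $R/I\cong k[z]/(z^m)$ is equivalent to $R/I$ being generated by one element as a $k$-algebra, i.e. to $\dim_k \m/(\m^2+I)\le 1$, which is an open (upper-semicontinuous) condition along the universal family.

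First I would compute $H^c$ directly. A curvilinear subscheme is the $m$-th infinitesimal neighbourhood of the origin inside a smooth formal branch; after discarding the branches tangent to the $y$-axis (a lower-dimensional locus treated symmetrically in the other chart), such a branch is parametrised by $x\mapsto t,\ y\mapsto c_1t+c_2t^2+\cdots$, and the resulting length-$m$ ideal depends only on $(c_1,\dots,c_{m-1})$. This yields a chart $\mathbb{A}^{m-1}\to H^c$; verifying that it is injective (distinct truncated parametrisations with $x=t$ fixed give distinct ideals) and dominant shows that $H^c$ is a smooth irreducible variety of dimension $m-1$, hence $\overline{H^c}$ is an irreducible component of $H$ of dimension $m-1$.

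Next I would organise $H$ by the Hilbert function $T=(\dim_k(R/I)_i)_i$ of the associated graded ring $\operatorname{gr}_{\m}(R/I)$, obtaining finitely many locally closed strata $Z_T$, with the curvilinear stratum being exactly $Z_{(1,1,\dots,1)}=H^c$. The key numerical input, special to two variables, is Iarrobino's dimension estimate: for every admissible $T$ one has $\dim Z_T\le m-1$, with equality only for the curvilinear $T$. Combined with the previous paragraph this already forces $\overline{H^c}$ to be the unique top-dimensional part of $H$, and reduces the whole statement to the containment $Z_T\subseteq \overline{H^c}$ for every $T$.

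This last containment — equivalently, density of $H^c$ — is the step I expect to be the main obstacle, since it is precisely the assertion that no lower-dimensional spurious components survive. I would attack it by induction on $m$ (the cases $m\le 2$ being trivial) through the incidence scheme $\widetilde H:=\{(I',I):I\subset I',\ \dim_k R/I'=m-1,\ \dim_k R/I=m\}$, whose projections $p\colon(I',I)\mapsto I'$ and $q\colon(I',I)\mapsto I$ have fibres $\mathbb{P}(I'/\m I')\cong\mathbb{P}^{\mu(I')-1}$ and $\mathbb{P}(\operatorname{soc}(R/I))\cong\mathbb{P}^{s(I)-1}$ respectively. The Hilbert–Burch structure theorem for codimension-two ideals gives the identity $s(I)=\mu(I)-1$, which is the coincidence that makes the two fibrations balance; feeding in the inductive density over $\Hilb^{m-1}$ and pushing forward along $q$ (which is surjective, as every Artinian quotient has nonzero socle) propagates density to $H$, but \emph{only} after controlling the loci where $\mu(I')$ is large, where a family of $\mathbb{P}^1$'s cannot by dimension degenerate onto a higher-dimensional fibre. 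Bounding the dimension of these large-socle strata is exactly the content of Iarrobino's estimate, and it is here that planarity (two variables) is indispensable: the analogous irreducibility fails at points of higher embedding dimension. I therefore regard establishing this socle-dimension bound, and the resulting containment of every stratum in $\overline{H^c}$, as the genuinely hard core of the argument.
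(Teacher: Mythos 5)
The paper gives no proof of this statement: it is recorded as a Fact and attributed to Brian\c con, with \cite{Bri}, \cite{Iar}, \cite{Gra} as references, so there is no argument of the authors' to compare yours against; the only question is whether your sketch stands on its own. The preliminary parts do: openness of the curvilinear locus via semicontinuity of $\dim_k \m/(\m^2+I)$, the two charts $\mathbb{A}^{m-1}\to \Hilbc^m(k[[x,y]])$ showing that the closure of the curvilinear locus is an irreducible component of dimension $m-1$, the Hilbert--Burch identity $s(I)=\mu(I)-1$ for $\m$-primary ideals of $k[[x,y]]$, and the surjectivity of $q$ are all correct and standard.

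The gap is in the only step that actually constitutes Brian\c con's theorem, namely the density $Z_T\subseteq\overline{\Hilbc^m}$ for every Hilbert function $T$, and your proposal does not close it. First, the stratum bound $\dim Z_T\le m-1$ (with equality only for the curvilinear $T$) is invoked rather than proved, and even granted in full it only shows that $\overline{\Hilbc^m}$ is the unique component of maximal dimension; it does not exclude lower-dimensional components. Second, the inductive mechanism through the incidence scheme $\widetilde H$ is not actually completed: knowing that $p^{-1}(\Hilbc^{m-1}(k[[x,y]]))$ is irreducible of dimension $m-1$ and that $q$ is surjective does not imply that every pair $(I',I)$ lies in the closure of $p^{-1}$ of the curvilinear locus, which is what you would need to push density forward along $q$. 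To get that, one must either prove irreducibility of the nested punctual Hilbert scheme (not easier than the statement at hand) or exhibit explicit degenerations of each stratum into the curvilinear locus; this is precisely what the proofs of Brian\c con, Iarrobino, Granger, and (in the incidence-variety formulation you adopt) Ellingsrud--Lehn carry out, typically via standard bases or careful socle/generator dimension estimates special to two variables. You correctly identify where the difficulty sits, but as written the proposal reduces the theorem to an unproved assertion essentially equivalent to it.
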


Note that Facts \ref{F:Hilb-sur} and \ref{F:Hilb-Chow}, together with \eqref{E:fiberHC}, imply that $\HC$ is a resolution of singularities (see also \cite[Thm. 7.3.4]{FGA}).

\vspace{0,1cm}

Consider now the reduced fiber product $\wt{\Hilb}_S^n:=(S^n\times_{\Sym^n(S)} \Hilb_S^n)_{\rm red}$, i.e. the reduced scheme associated to the fiber product of $S^n$ and $\Hilb_S^n$ over $\Sym^n(S)$. The scheme $\wt{\Hilb}_S^n$ was introduced by M. Haiman \cite[Def. 3.2.4]{Hai} under the name of \emph{isospectral Hilbert scheme} of $S$.
Consider the diagram
\begin{equation}\label{E:diag-iso}
\xymatrix{
\wt{\Hilb}_S^n \ar[r]^{\psi} \ar[d]_{\sigma} & \Hilb_S^n \ar[d]\\
S^n  \ar[r]&  \Sym^n(S).
}
\end{equation}
Clearly, there is a natural action of $\Sigma_n$ on $\wt{\Hilb}_S^n$ that makes $\sigma$ a $\Sigma_n$-equivariant
morphism and $\psi$ a $\Sigma_n$-invariant morphism.
In \cite{Hai}, Haiman proved the following properties of $\wt{\Hilb}^n_S$.

\begin{fact}[Haiman]\label{F:iso-Hilb}
Assume that either ${\rm char}(k)=0$ or ${\rm char}(k)>n. $\footnote{Haiman stated his results in \cite{Hai} under the assumption that ${\rm char}(k)=0$. However,  his results are true also if ${\rm char}(k)>n$ as observed by M. Groechenig in \cite[Rmk. 4.9]{Gro1}.} Then
\noindent
\begin{enumerate}[(i)]
\item The isospectral Hilbert scheme $\wt{\Hilb}^n_S$ is normal, Gorenstein and integral of dimension $2n$.
\item The morphism $\psi:\wt{\Hilb}^n_S\to \Hilb_S^n$ is finite and flat of degree $n!$.
\end{enumerate}
\end{fact}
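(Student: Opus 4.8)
This is Haiman's ``$n!$ theorem'' \cite{Hai}, whose extension to ${\rm char}(k)>n$ is due to Groechenig \cite{Gro1}; the plan is to follow Haiman's strategy. Since the Cohen-Macaulay, Gorenstein, normality and flatness assertions are all \'etale-local on the base $\Hilb^n_S$, and any smooth surface is \'etale-locally isomorphic to $\mathbb{A}^2=\Spec k[x,y]$, I would first reduce to $S=\mathbb{A}^2$, so that $\wt{\Hilb}^n_S$ becomes the affine isospectral Hilbert scheme $X_n$ sitting inside $(\mathbb{A}^2)^n\times\Hilb^n_{\mathbb{A}^2}$. The cheap part is the generic picture: by Fact \ref{F:Hilb-sur} the scheme $\Hilb^n_S$ is smooth, irreducible of dimension $2n$, and over the dense open locus $U\subseteq \Hilb^n_S$ of reduced (distinct-point) subschemes the morphism $\psi$ restricts to the projection from the complement of the big diagonal in $S^n$, which is irreducible of dimension $2n$ and is an $n!$-sheeted cover of $U$. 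Hence $\wt{\Hilb}^n_S$ (reduced by construction) carries a distinguished component of dimension $2n$ dominating $\Hilb^n_S$ with generic degree $n!$, and once normality and connectedness are known this component will be the whole of $X_n$, giving integrality, while the degree of $\psi$ will be forced to be $n!$ once flatness is known.

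The substance of the theorem---integrality (absence of spurious components over the discriminant), the Cohen-Macaulay and Gorenstein properties, and normality---is where I would invoke Haiman's central algebraic input, the Polygraph Theorem. Here one introduces the polygraph $Z(n,l)\subset(\mathbb{A}^2)^n\times(\mathbb{A}^2)^l$, the union over all maps $f:\{1,\dots,l\}\to\{1,\dots,n\}$ of the linear subspaces on which the $j$-th auxiliary point coincides with the $f(j)$-th point, and one proves that its coordinate ring $R(n,l)$ is a \emph{free} module over $k[\mathbf{x},\mathbf{y}]=k[x_1,\dots,x_n,y_1,\dots,y_n]$. Granting this, Haiman's description of $\Hilb^n_{\mathbb{A}^2}$ as the blow-up of $(\mathbb{A}^2)^n$ along the ideal generated by the $\Sigma_n$-alternating polynomials, together with the identification of the sign-isotypic components of the associated blow-up algebra with twists of the polygraph rings, lets one read off that $X_n$ is reduced, irreducible, Cohen-Macaulay and in fact Gorenstein; normality then follows via Serre's criterion, with the $S_2$ condition supplied by Cohen-Macaulayness and regularity in codimension one coming from the analysis along the generic point of the discriminant.

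Finally, flatness of $\psi$ would follow by miracle flatness: $\psi$ is a finite morphism between schemes of pure dimension $2n$, the target $\Hilb^n_S$ is regular by Fact \ref{F:Hilb-sur} and the source $\wt{\Hilb}^n_S$ is Cohen-Macaulay by the previous step, so $\psi$ is automatically flat, and being finite and flat over the integral base $\Hilb^n_S$ it is locally free of constant rank equal to the generic degree $n!$ computed over $U$. The decisive obstacle is the Polygraph Theorem itself: its proof is a delicate double induction on $(n,l)$ that degenerates $Z(n,l)$ onto configurations of smaller polygraphs and tracks an explicit free generating family through these degenerations. This is also the only place where the characteristic hypothesis enters; as Groechenig observes in \cite{Gro1}, Haiman's characteristic-zero argument goes through verbatim as soon as ${\rm char}(k)>n$, since one only needs the orders of the symmetric groups $\Sigma_m$ with $m\leq n$ to be invertible in $k$.
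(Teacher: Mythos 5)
The paper does not prove this statement: it is stated as a ``Fact'' and attributed wholesale to Haiman \cite{Hai}, with the relaxation of the characteristic hypothesis credited to Groechenig \cite[p. 18--19]{Gro1}, so there is no internal argument to compare yours against. Your sketch is a faithful outline of Haiman's actual proof (reduction to $\mathbb{A}^2$, the Polygraph Theorem as the key input, the blow-up description, Serre's criterion for normality, and miracle flatness to get flatness and degree $n!$), and you correctly identify that the characteristic enters only through the invertibility of $|\Sigma_m|$ for $m\leq n$. One small imprecision: in Haiman's picture it is $\Hilb^n_{\mathbb{A}^2}$ that is the blow-up of $\Sym^n(\mathbb{A}^2)$ along the ideal generated by the alternating polynomials, while the isospectral scheme $X_n$ is the blow-up of $(\mathbb{A}^2)^n$ along the corresponding ideal $J$; you have conflated the two bases, though this does not affect the structure of the argument.
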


\vspace{0,1cm}

The inverse image of the curvilinear Hilbert scheme $\Hilbc_S^n\subseteq \Hilb_S^n$ via the map $\psi$ of \eqref{E:diag-iso} admits a modular description that we now recall.
Denote by $\Flag_S^n$ the moduli space of flags
$$D_1\subset\ldots \subset D_n,$$
where $D_i\in \Hilbc_S^n$ has length $i$ for every $i=1,\ldots, n$. There is a natural morphism
\begin{equation}\label{E:psi-cur}
\begin{aligned}
{}^c \psi: \Flag_S^n & \longrightarrow \Hilbc_S^n, \\
(D_1\subset \ldots \subset D_n) & \mapsto D_n.
\end{aligned}
\end{equation}

\begin{fact}\label{F:flag-iso}
Assume that either ${\rm char}(k)=0$ or ${\rm char}(k)>n$. Then there is a cartesian diagram
$$
\xymatrix{
\Flag_S^n \ar[r]^{{}^c\psi}\ar@{^{(}->}[d] \ar@{}[dr]|{\square}& \Hilbc_S^n \ar@{^{(}->}[d]\\
\wt{\Hilb}_S^n \ar[r]_{\psi} & \Hilb_S^n
}
$$
\end{fact}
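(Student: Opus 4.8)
The plan is to make the right-hand fiber product explicit and then exhibit $\Flag_S^n$ as representing it. Since $\Hilbc_S^n\hookrightarrow\Hilb_S^n$ is an open immersion, $\wt{\Hilb}_S^n\times_{\Hilb_S^n}\Hilbc_S^n$ is just the open subscheme $\wt{\Hilbc}_S^n:=\psi^{-1}(\Hilbc_S^n)\subseteq\wt{\Hilb}_S^n$, which by Fact \ref{F:iso-Hilb} is normal and integral of dimension $2n$ and carries the restriction $\sigma=(P_1,\dots,P_n)\colon\wt{\Hilbc}_S^n\to S^n$ of the tautological ordered points, with $\sum_i[P_i]=\HC\circ\psi$ in $\Sym^n(S)$. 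Thus the claimed cartesian square is equivalent to an isomorphism $\Flag_S^n\cong\wt{\Hilbc}_S^n$ over $\Hilb_S^n$, i.e. one intertwining ${}^c\psi$ and $\psi$.

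First I would build the forward morphism $\Phi\colon\Flag_S^n\to\wt{\Hilbc}_S^n$. Over $\Flag_S^n$ lives the universal flag $\mathcal{D}_1\subset\cdots\subset\mathcal{D}_n$, with $\mathcal{D}_j$ finite and flat of degree $j$; for each $j$ the successive quotient $\mathcal{I}_{\mathcal{D}_{j-1}}/\mathcal{I}_{\mathcal{D}_j}$ is flat of relative length one over $\Flag_S^n$, so its scheme-theoretic support is the graph of a morphism $a_j\colon\Flag_S^n\to S$ recording the $j$-th point. The tuple $(a_1,\dots,a_n)$ together with ${}^c\psi$ (which sends the flag to $\mathcal{D}_n$) satisfies $\sum_j[a_j]=\HC(\mathcal{D}_n)$, hence defines a map to $S^n\times_{\Sym^n(S)}\Hilbc_S^n$ which, once $\Flag_S^n$ is known to be reduced, factors through the reduction $\wt{\Hilbc}_S^n$.

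The heart of the proof is the construction of the inverse, for which I would build the universal flag over $\wt{\Hilbc}_S^n$ out of the ordering $P_1,\dots,P_n$. The crucial feature of the curvilinear locus is that the local rings are chain rings: at a point $p$ a curvilinear scheme $\O_{D,p}\cong k[x]/(x^m)$ has a \emph{unique} subscheme of each colength $t\le m$, namely the one cut out by $(x^t)$. Hence there is a unique curvilinear subscheme $\mathcal{D}_j\subseteq\mathcal{D}_n$ of length $j$ whose $0$-cycle is $\sum_{i\le j}[P_i]$, and these assemble into a flag. To produce the $\mathcal{D}_j$ as flat families I would use that, by Fact \ref{F:cur-sur} and the curvilinear hypothesis, the universal subscheme is \'etale-locally (around each point of its support) contained in a family of smooth curves $\mathcal{C}\subset S$; on a smooth curve $\Hilb^n=\Sym^n$, so the $\mathcal{D}_j=\sum_{i\le j}P_i$ are manifestly flat effective subdivisors, and the intrinsic characterization above guarantees that these local constructions are independent of the auxiliary curve and glue to a global universal flag. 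This yields $\Psi\colon\wt{\Hilbc}_S^n\to\Flag_S^n$, and one checks on functors of points that $\Phi$ and $\Psi$ are mutually inverse: the successive points of $\Psi(D,P_\bullet)$ recover $P_\bullet$, and the flag reconstructed from its successive points is the original one.

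I expect the globalization in the previous paragraph---realizing the $\mathcal{D}_j$ as flat families over $\wt{\Hilbc}_S^n$ and checking that the \'etale-local models glue---to be the main obstacle, and it is precisely here that the curvilinear condition (chain-ring structure plus embeddability into a smooth curve) is indispensable; note that over a fat curvilinear point the scheme-theoretic fiber of ${}^c\psi$ is non-reduced of length $n!$, matching the totally ramified fiber of the degree-$n!$ finite flat map $\psi$ from Fact \ref{F:iso-Hilb}. If one prefers to sidestep the explicit inverse, an alternative is to show that $\Phi$ is finite (both schemes being finite over $\Hilbc_S^n$, via properness of the flag scheme and finiteness of $\psi$) and an isomorphism over the dense locus of $n$ distinct reduced points, and then invoke Zariski's Main Theorem together with the normality of $\wt{\Hilbc}_S^n$ to conclude that $\Phi$ is an isomorphism.
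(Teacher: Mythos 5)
The paper offers no proof of this Fact: it is quoted from Arinkin \cite[Prop. 3.7]{arin2}, and your argument is essentially a reconstruction of his. The skeleton is right: the fibre product is the open subscheme $\psi^{-1}(\Hilbc_S^n)$ of the reduced scheme $\wt{\Hilb}_S^n$; the chain-ring structure of curvilinear algebras shows that a complete flag of $D$ is the same datum as an ordering of the cycle $\HC(D)$; and the scheme-theoretic comparison is made by passing, Zariski-locally on $\Hilbc_S^n$, to a family of smooth curves $\cC\subseteq S\times U$ containing the universal subscheme (these exist by the Bertini theorem of Altman--Kleiman behind Fact \ref{F:cur-sur}, since curvilinearity is exactly the condition $I_D\not\subseteq I_p^2$ at singular points of the section).

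The one step your write-up genuinely leans on without justification is the reducedness of $\Flag_S^n$ (and, for the Zariski's-Main-Theorem variant, its integrality). You need it both for the forward map — to see that the two composites $\Flag_S^n\to \Sym^n(S)$ agree and that $({}^c\psi,(a_1,\dots,a_n))$ factors through the \emph{reduced} fibre product $\wt{\Hilb}_S^n$ — and to run the finite-birational-onto-normal argument; neither reducedness nor irreducibility of a nested Hilbert scheme is automatic, and the full flag Hilbert scheme of a surface is badly behaved away from the curvilinear locus. The gap is closable from your own local model, so it is worth making this explicit: over an open $U\subseteq\Hilbc_S^n$ carrying such a $\cC$, the functor of flags is literally $\cC^n_U\times_{\Sym^n_U\cC}U$, which is finite and flat of degree $n!$ over $U$ by miracle flatness; being finite flat over the integral $U$ and generically reduced (it is \'etale over the locus of distinct points), it is reduced, and since any flag deforms inside a fixed smooth curve containing $D$ to one with reduced support, $\Flag_S^n$ is irreducible. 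With this in hand either of your routes closes: the local identifications with $(S^n\times_{\Sym^n(S)}U)_{\rm red}$ glue because a morphism into $\Flag_S^n$ is determined by $({}^c\psi,{}^c\sigma)$ (flat subschemes of a finite flat family that agree fibrewise over a reduced base coincide), while the alternative route additionally invokes Haiman's normality of $\wt{\Hilb}_S^n$ — which is where the hypothesis on ${\rm char}(k)$ actually enters.
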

For a proof, see \cite[Prop. 3.7]{arin2}. Moreover, in loc. cit., it is also shown that
the composition of the inclusion $\Flag_S^n\hookrightarrow \wt{\Hilb}_S^n$ given in Fact \ref{F:flag-iso} with the map $\sigma:\wt{\Hilb}_S^n\to S^n$ of \eqref{E:diag-iso} is equal to the modular map
\begin{equation}\label{E:sig-cur}
\begin{aligned}
{}^c \sigma: \Flag_S^n & \longrightarrow S^n, \\
(D_1\subset \ldots \subset D_n) & \mapsto \left( \supp \ker(\cO_{D_i} \to \cO_{D_{i-1}})\right)_i.
\end{aligned}
\end{equation}

\subsection{Punctual Hilbert scheme of curves with planar singularities}\label{S:Hilb-cur}

Throughout this subsection, we fix a connected projective reduced curve $X$ with planar singularities over an algebraically closed field $k$.

\vspace{0,1cm}

Note that if $D\in \Hilb^n_X$ then its ideal sheaf $I_D$ is a torsion-free rank-1 sheaf on $X$ (in the sense of Definition \ref{D:sheaves}), which is however, in general, neither a line bundle (unless $X$ is smooth) nor simple (unless $X$ is
irreducible). We refer to \cite[Exa. 38]{est1} for an example of $D\in\Hilb^n_X$ with $I_D$ not simple.
We introduce the following subschemes of $\Hilb^d(X)$:
\begin{equation}\label{E:subsch1}
\begin{sis}
& \Hilbr^n_X:=\{D\in \Hilb^n_X\: : \: D \text{ is reduced and contained in the  smooth locus } X_{\rm sm} \subseteq X  \},\\
& \Hilbl^n_X:=\{D\in \Hilb^n_X\: : \: I_D \text{ is a line bundle}\}, \\
& \Hilbs^n_X:=\{D\in \Hilb^n_X\: : \: I_D \text{ is simple}\}.\\
\end{sis}
\end{equation}
By \cite[Prop. 5.15]{AK}, the natural inclusions
\begin{equation}\label{E:Hilb-ls}
\Hilbr^n_X\subseteq \Hilbl^n_X\subseteq \Hilbs^n_X\subseteq \Hilb^n_X
\end{equation}
are open inclusions.

The punctual Hilbert scheme of a curve with planar singularities was studied by Altman-Iarrobino-Kleiman \cite{AIK} and by Brian\c con-Granger-Speder \cite{BGS}, who proved the following

\begin{fact}[Altman-Iarrobino-Kleiman, Brian\c con-Granger-Speder]\label{F:Hilb-cur}
Let $X$ be a connected projective reduced curve with planar singularities.  Then the Hilbert scheme $\Hilb^n_X$
satisfies the following properties:
\begin{enumerate}[(i)]
\item \label{F:Hilb-cur1} $\Hilb^n_X$
is a connected and reduced projective scheme of pure dimension $n$ with locally complete intersection singularities.
\item \label{F:Hilb-cur2} $\Hilbr^n_X$ is dense in $\Hilb^n_X$.
\item \label{F:Hilb-cur3} $\Hilbl^n_X$ is the smooth locus of $\Hilb^n_X$.
\end{enumerate}
\end{fact}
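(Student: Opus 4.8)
The plan is to use the embedding of $X$ into a smooth surface provided by Fact \ref{F:cur-sur} in order to realize $\Hilb^n_X$ as the zero scheme of a section of a rank-$n$ vector bundle on the smooth, irreducible $2n$-dimensional variety $\Hilb^n_S$ (Fact \ref{F:Hilb-sur}), and then to pin down its dimension by a stratification argument resting on Brian\c con's theorem (Fact \ref{F:Hilb-Chow}). Concretely, fix a smooth projective surface $S\supset X$; since $S$ is smooth, $X$ is locally cut out by one equation, so writing locally $X=V(f)$ with $f\in\O_S$ and using the universal divisor diagram \eqref{E:univ-div}, the function $g^*f\in\Gamma(\D,\O_{\D})$ pushes forward to a section $\ov f:=h_*(g^*f)$ of the locally free rank-$n$ sheaf $\A=h_*\O_{\D}$. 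A point $D\in\Hilb^n_S$ lies in the zero scheme of $\ov f$ exactly when $f$ vanishes on $D$, i.e. when $D\subseteq X$, so $\Hilb^n_X$ is (locally, hence globally after the evident twist by $\O_S(X)$) the zero scheme of a section of a rank-$n$ bundle on the smooth $\Hilb^n_S$. Two consequences are immediate: every irreducible component of $\Hilb^n_X$ has dimension $\geq 2n-n=n$, and wherever $\Hilb^n_X$ has the expected dimension $n$ the section is regular, so $\Hilb^n_X$ is there a local complete intersection of pure dimension $n$. Part \eqref{F:Hilb-cur1} is thus reduced to the reverse bound $\dim\Hilb^n_X\leq n$, together with connectedness and reducedness.

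For the dimension bound I would use the Hilbert--Chow support morphism $\HC_X:\Hilb^n_X\to\Sym^n(X)$ and stratify according to the length of $D$ concentrated at the singular points. Over a cycle $\sum_i n_i p_i$ the fibre is $\prod_i\Hilb^{n_i}(\wh\O_{X,p_i})$, where a smooth point contributes a single reduced point while a planar singular point contributes a closed subscheme of $\Hilb^{n_i}(\wh\O_{S,p_i})$, of dimension at most $n_i-1$ by Brian\c con's theorem. If $D$ has total length $b$ supported on $\Xsm$ and lengths $a_1,\dots,a_r\geq 1$ at $r$ distinct singular points, the corresponding stratum has dimension at most $b+\sum_j(a_j-1)=n-r$. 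This gives $\dim\Hilb^n_X\leq n$, hence pure dimension $n$ and the local complete intersection property everywhere. The same count shows that every stratum meeting $\Xsing$, as well as the loci carrying a non-reduced subscheme of $\Xsm$, has dimension $<n$, so the open set $\Hilbr^n_X$ of $n$ distinct points of $\Xsm$ is dense, which is \eqref{F:Hilb-cur2}.

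Reducedness then follows formally. A local complete intersection is Cohen--Macaulay, hence satisfies Serre's condition $S_1$; on the other hand $\Hilbr^n_X$ is smooth (being open in $\Sym^n(\Xsm)$ of the smooth curve $\Xsm$) and dense, so $\Hilb^n_X$ is generically reduced, i.e. satisfies $R_0$; by Serre's criterion $R_0+S_1$ yields reducedness. For connectedness I would note that $\HC_X$ is proper (as $\Hilb^n_X$ is projective) and surjective onto $\Sym^n(X)$, which is connected because $X$ is; since the fibres $\prod_i\Hilb^{n_i}(\wh\O_{X,p_i})$ are products of (connected) punctual Hilbert schemes of the planar germs $\wh\O_{X,p_i}$, a proper surjection with connected fibres over a connected base has connected source. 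This completes \eqref{F:Hilb-cur1}.

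Finally, for \eqref{F:Hilb-cur3} I would compute the Zariski tangent space $T_D\Hilb^n_X=\Hom_X(I_D,\O_D)$ and show that $\dim_k\Hom_X(I_D,\O_D)=n$ precisely when $I_D$ is locally free, i.e. when $D\in\Hilbl^n_X$; since $\Hilb^n_X$ is pure of dimension $n$, this equality characterizes its smooth points. One inclusion is clear, since along $\Hilbl^n_X$ the subscheme $D$ is a Cartier divisor and a direct computation gives tangent dimension $n$. I expect the reverse inclusion to be the main obstacle: one must show that the tangent space strictly jumps at every $D$ whose ideal fails to be principal at some planar singularity, which demands a genuinely local analysis of $\Hom(I,\wh\O_{X,p}/I)$ for non-principal ideals $I\subset\wh\O_{X,p}=k[[x,y]]/(f)$. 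This can again be organized through the surface embedding, by identifying the smooth locus of the zero scheme of $\ov f$ with the locus where the differential of $\ov f$ is surjective onto the fibre of $\A$.
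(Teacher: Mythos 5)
The paper offers no proof of this Fact beyond a citation: part (i) is attributed to \cite[Cor.~7]{AIK} and \cite[Prop.~1.4, Prop.~3.1]{BGS}, part (ii) to \cite[Prop.~1.4]{BGS}, and part (iii) to \cite[Prop.~2.3]{BGS}. Your reconstruction follows the same architecture as those sources: you realize $\Hilb^n_X$ inside the smooth $2n$-dimensional $\Hilb^n_S$ as the zero scheme of the section $h_*(g^*f)$ of the rank-$n$ bundle $h_*(g^*\O_S(X))$, which gives the lower bound $\dim\geq n$ on every component and the l.c.i.\ property once purity is known; you obtain the upper bound and the density of $\Hilbr^n_X$ from the Hilbert--Chow stratification together with Brian\c con's theorem; and you deduce reducedness from $R_0+S_1$. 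These parts are correct and essentially reproduce the published arguments.

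There remain two genuine gaps. The first, which you flag yourself, is the hard half of (iii): you never show that $\dim_k\Hom(I_D,\O_D)>n$ whenever $I_D$ fails to be principal at some singular point, i.e.\ that a non-principal ideal $I$ of colength $m$ in $k[[x,y]]/(f)$ satisfies $\dim_k\Hom\bigl(I,(k[[x,y]]/(f))/I\bigr)\geq m+1$. This local computation is exactly the content of \cite[Prop.~2.3]{BGS} and is the one place in the whole statement where real work is required; as written you have only proved that $\Hilbl^n_X$ is contained in the smooth locus, not the reverse inclusion. The second gap is the connectedness of $\Hilb^n_X$: your argument needs each fibre $\prod_i\Hilb^{n_i}(\wh{\O}_{X,p_i})$ of $\HC$ to be connected, and you justify this by asserting that the punctual Hilbert schemes of the planar curve germs are connected. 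Brian\c con gives irreducibility of the ambient $\Hilb^{n_i}(k[[x,y]])$, but $\Hilb^{n_i}(\wh{\O}_{X,p_i})$ is merely a closed subscheme of it, and a closed subscheme of an irreducible scheme can perfectly well be disconnected; the connectedness of these local Hilbert schemes is a theorem in its own right, not a consequence of anything you have established, so invoking it here leaves a hole. A safer route is the standard induction through the nested Hilbert scheme $\Hilb^{n-1,n}_X$ of pairs $D'\subset D$: it surjects onto $\Hilb^n_X$, and its fibre over $D'\in\Hilb^{n-1}_X$ is the scheme $\PP(I_{D'})$ of length-one quotients of $I_{D'}$, which maps onto $X$ with projective-space fibres and is therefore connected; this proves connectedness of the Hilbert scheme of points of any connected projective scheme and uses no planarity at all.
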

\begin{proof}
Part \eqref{F:Hilb-cur1} follows from \cite[Cor. 7]{AIK} (see also \cite[Prop. 1.4]{BGS}) and \cite[Prop. 3.1]{BGS}. Part \eqref{F:Hilb-cur2} follows from \cite[Prop. 1.4]{BGS}.
Part \eqref{F:Hilb-cur3} follows from \cite[Prop. 2.3]{BGS}.
\end{proof}
Note that the above properties \eqref{F:Hilb-cur2} and \eqref{F:Hilb-cur3} of $\Hilb_X^n$ are inherited by its open subset $\Hilbs_X^n$. This also holds for the reducedness and the locally complete intersection singularities part of \eqref{F:Hilb-cur1}.

\vspace{0,1cm}

The punctual Hilbert scheme of $X$ and the moduli space $\bJbar_X$ are related via the Abel map, which is defined as follows. Given a line bundle $M$ on $X$, we define the \emph{($M$-twisted) Abel map of degree $d$} by
\begin{equation}\label{E:Abel-map}
\begin{aligned}
A_M^d: \Hilbs^d_X & \longrightarrow \bJbar_X, \\
D & \mapsto I_D\otimes M. \\
\end{aligned}
\end{equation}
Note that the image of $A_M(\Hilbs^d_X)$ is contained in $ \bJbar_X^{1-p_a(X)-d+\deg M}$, since for any $D\in \Hilb^d X$ we have that 
$$\chi(I_D\otimes M)=\chi(I_D)+\deg M=\chi(\O_X)-\chi(\O_D)+\deg M=1-p_a(X)-d+\deg M.$$

The following result  shows that, locally on the codomain, the $M$-twisted Abel maps of degree $p_a(X)$ are smooth and surjective (for a suitable choices of $M\in \Pic(X)$) for Gorenstein curves.

\begin{fact}\label{F:Abel-sur}
Let $X$ be a connected projective reduced Gorenstein curve of arithmetic genus $g=p_a(X)$.
For any  $\chi\in \bbZ$, there exists a cover of $\bJbar_X^{\chi}$ by $k$-finite type open subsets $\{U_{\beta}\}$ such that, for each such $U_{\beta}$, there exists $M_{\beta}\in  \Pic^{\chi+2g-1}(X)$ with the property that
 $\Hilbs^{g}_X\supseteq  V_{\beta}:=(A_{M_{\beta}}^g)^{-1}(U_{\beta})\stackrel{A^g_{M_{\beta}}}{\longrightarrow} U_{\beta}$ is smooth and surjective.
\end{fact}
\begin{proof}
See \cite[Prop. 2.5]{MRV1}.
\end{proof}

\begin{remark}\label{R:Ab-irr}
\noindent 
\begin{enumerate}[(i)]
\item The integer $g=p_a(X)$ is the smallest integer for which the above Fact \ref{F:Abel-sur}  is true for any $X$ (see \cite[Rmk. 2.6]{MRV1}).
\item If the curve $X$ is irreducible (and Gorenstein) of arithmetic genus $g$, then we can get a global result although using a bigger punctual Hilbert scheme, namely: 
for any integer $\chi\in \bbZ$ and for any line bundle $M$ on $X$ of degree $3g-2+\chi$, the  $M$-twisted Abel map $A_M^{2g-1}:\Hilbs^{2g-1}(X)=\Hilb^{2g-1}(X)\to \bJbar_X^{\chi}$ is smooth and surjective  (see \cite[Thm.8.6]{AK}). It is easy to see that $2g-1$ is the smallest integer for which the above property holds for any $X$ of arithmetic genus $g$. 
\end{enumerate}
\end{remark}

\vspace{0,2cm}

Consider now the curvilinear Hilbert scheme $\Hilbc_X^n \subset \Hilb_X^n$ of $X$.
Observe that, since $X$ is assumed to have planar singularities, $D\in \Hilb_X^n$ belongs to $\Hilbc_X^n$
if and only if $I_D\not\subseteq I_p^2$ for every $p\in \Xsing$, where $I_p$ denotes the defining ideal of $p\in X$.
Furthermore, if we chose a projective smooth integral surface $S$ such that $X\subset S$ (see Fact \ref{F:cur-sur}), then we have the equality
\begin{equation}\label{E:curXS}
\Hilbc_X^n=\Hilbc_S^n\cap \Hilb_X^n\subset \Hilb_S^n.
\end{equation}

\begin{lemma}\label{L:cod-curv2}
The complement of $\Hilbc^n_X$ inside $\Hilb_X^n$ has codimension at least two.
\end{lemma}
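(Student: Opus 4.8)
The plan is to bound the dimension of the non-curvilinear locus $Z:=\Hilb_X^n\setminus\Hilbc_X^n$ directly, showing $\dim Z\le n-2$; since $\Hilb_X^n$ has pure dimension $n$ by Fact \ref{F:Hilb-cur}\eqref{F:Hilb-cur1}, this yields $\codim Z\ge 2$. (If $n\le 2$ every length-$n$ subscheme is curvilinear and $Z=\emptyset$, so I may assume $n\ge 3$.) The starting point is the local characterization recalled just above the statement: $D\in\Hilbc_X^n$ if and only if $I_D\not\subseteq I_p^2$ for every $p\in\Xsing$. Since $\Gamma(D,\O_D)$ splits as the product of its local factors and since $\wh{\O}_{X,p}\cong k[[x]]$ is a discrete valuation ring at every smooth point, a subscheme is curvilinear exactly when each of its local components at the finitely many singular points is curvilinear. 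Writing $Z_p:=\{D\in\Hilb_X^n:I_D\subseteq I_p^2\}$ for $p\in\Xsing$, one then has $Z=\bigcup_{p\in\Xsing}Z_p$, a finite union, so it suffices to prove $\dim Z_p\le n-2$ for each singular point $p$.

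Next I would stratify $Z_p$ by the length $m$ of the local component $D_p$ of $D$ at $p$. The condition $I_D\subseteq I_p^2$ depends only on $D_p$ and forces $m\ge 3$; moreover it says precisely that $D_p$ lies in the closed subset $H^m_{p,\mathrm{nc}}\subseteq H^m_p:=\Hilb^m(\wh{\O}_{X,p})$ consisting of colength-$m$ ideals $I\subseteq\wh{\O}_{X,p}$ with $I\subseteq\m_p^2$ (here $\m_p$ is the maximal ideal, the localization of $I_p$). Recording $D_p$ together with the residual length-$(n-m)$ subscheme supported on $X\setminus\{p\}$ exhibits this stratum, via the standard product decomposition of the Hilbert scheme over disjoint supports, as an open subset of the product $H^m_{p,\mathrm{nc}}\times\Hilb_X^{n-m}$; the second factor has dimension at most $n-m$, again by Fact \ref{F:Hilb-cur}\eqref{F:Hilb-cur1}. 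Hence the stratum has dimension at most $\dim H^m_{p,\mathrm{nc}}+(n-m)$, and the whole problem reduces to the local estimate $\dim H^m_{p,\mathrm{nc}}\le m-2$ for every $m\ge 3$, since then $\dim Z_p\le\max_{m\ge 3}[(m-2)+(n-m)]=n-2$.

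For the local estimate I would pass to a smooth projective surface $S$ with $X\subset S$, provided by Fact \ref{F:cur-sur}. Completing at $p$ gives $\wh{\O}_{X,p}=k[[x,y]]/(f)$ with $f\in\m^2$ (as $p$ is singular), so colength-$m$ ideals of $\wh{\O}_{X,p}$ correspond to colength-$m$ ideals of $k[[x,y]]$ containing $f$; this identifies $H^m_p$ with a closed subscheme of the surface local Hilbert scheme $\Hilb^m(k[[x,y]])$, and under it $H^m_{p,\mathrm{nc}}$ maps into the locus $\{I\subseteq\m^2\}$ because $\m^2+(f)=\m^2$. But $\{I\subseteq\m^2\}$ is exactly the complement of the curvilinear local Hilbert scheme $\Hilbc^m(k[[x,y]])$, which by Brian\c con's theorem (Fact \ref{F:Hilb-Chow}) is a dense open subset of the irreducible $(m-1)$-dimensional scheme $\Hilb^m(k[[x,y]])$. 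Therefore $\{I\subseteq\m^2\}$ is a proper closed subset of dimension at most $m-2$, and so is its closed subscheme $H^m_{p,\mathrm{nc}}$, which is what was needed.

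The main obstacle I anticipate lies in the second paragraph: making rigorous the reduction of the stratum of $Z_p$ to the product of the punctual Hilbert scheme $H^m_{p,\mathrm{nc}}$ at $p$ with the Hilbert scheme of the residual points away from $p$. This rests on the standard (but care-requiring) fact that, over the locus of symmetric products where one point is pinned at $p$ and the remaining support avoids $p$, the Hilbert scheme of points splits as a product of the punctual Hilbert scheme at $p$ and the Hilbert scheme of the complement; one must also verify that replacing $\Hilb_X^m$-germs at $p$ by the completed local Hilbert scheme $\Hilb^m(\wh{\O}_{X,p})$ preserves the dimension bound. The remaining ingredients — the local curvilinearity criterion, Brian\c con's irreducibility, and the purity of $\Hilb_X^{n-m}$ — are quoted directly from the facts above.
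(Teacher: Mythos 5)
Your proof is correct and follows essentially the same route as the paper's: both decompose a non-curvilinear subscheme into its local component at a singular point $p$ and a residual part, embed the resulting stratum into $\left[\Hilb^m(\wh{\O}_{S,p})\setminus \Hilbc^m(\wh{\O}_{S,p})\right]\times \Hilb_X^{n-m}$, and combine Brian\c con's theorem (Fact \ref{F:Hilb-Chow}) with the purity of dimension of the Hilbert schemes of $X$ (Fact \ref{F:Hilb-cur}) to obtain the bound $n-2$. The only cosmetic difference is that you stratify by the singular point and the local length $m$, whereas the paper argues on each irreducible component of the complement via its generic point; the product decomposition over disjoint supports that you flag as the delicate step is exactly the embedding \eqref{E:openU} used in the paper.
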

\begin{proof}
First of all, chose a projective smooth integral surface $S$ such that $X\subset S$
(which is possible by Fact \ref{F:cur-sur}).
The Hilbert-Chow morphism of \eqref{E:Hilb-Chow} induces the following commutative diagram
\begin{equation}\label{E:diag-HC}
\xymatrix{
\Hilb_X^n \ar@{^{(}->}[r] \ar[d]_{\HC} & \Hilb_S^n \ar[d]^{\HC} \\
\Sym^n(X) \ar@{^{(}->}[r]& \Sym^n(S).
}
\end{equation}
Note that if $D\in \Hilb_X^n$ is such that $\HC(D)=\sum_i n_ip_i\in \Sym^n(X)$ then $D$ can be written as a disjoint union
$$D=\bigcup_{p_i\in \HC(D)} D_{|p_i},$$
where $D_{|p_i}$ is a $0$-dimensional subscheme of $X$ supported at $p_i$ and of length $n_i$.
We can look at $D_{|p_i}$ as an element of $\Hilb^{n_i}(\wh{O}_{S,p_i})$.
Clearly $D\in \Hilbc_X^n$ if and only if $D_{|p_i}\in \Hilbc^{n_i}(\wh{\O}_{S,p_i})$ for every $p_i\in \HC(D)$.

Consider now an irreducible component $W$ of $\Hilb_X^n\setminus \Hilbc_X^n$ and endow it with the reduced scheme structure. The above discussion implies that  there exists a singular point
$p\in X_{\rm sing}$ and an integer $m\geq 2$ such that for the generic $D\in W$ we have that
\begin{equation*}\label{E:irrW}
\begin{sis}
& mp\subseteq \HC(D), \\
& D_{|p}\in \Hilb^m(\wh{\O}_{S,p})\setminus \Hilbc^m(\wh{\O}_{S,p}).
\end{sis}
\end{equation*}
Therefore there exists an open and dense subset $U\subseteq W$ that admits an embedding
\begin{equation}\label{E:openU}
\begin{aligned}
U &\hookrightarrow \left[\Hilb^m(\wh{\O}_{S,p})\setminus \Hilbc^m(\wh{\O}_{S,p})\right] \times \Hilb^{n-m}_X \\
D & \mapsto \left(D_{|p}, \bigcup_{p\neq q\in \HC(D)} D_{|q}\right).
\end{aligned}
\end{equation}
Fact \ref{F:Hilb-Chow} and Fact \ref{F:Hilb-cur}\eqref{F:Hilb-cur1} imply that the right hand side of \eqref{E:openU} has dimension $m-2+(n-m)=n-2$;
therefore $W$, and hence $\Hilb_X^n\setminus \Hilbc_X^n$, can have dimension at most $n-2$. This concludes the proof since $\Hilb_X^n$ is pure of dimension $n$ by
Fact \ref{F:Hilb-cur}\eqref{F:Hilb-cur1}.

\end{proof}

If we intersect the open subsets of \eqref{E:Hilb-ls} with $\Hilbc_X^n\subset \Hilb_X^n$ we obtain the following
chain of open inclusions:
\begin{equation}\label{E:Hilb-rc}
\Hilbr^n_X\subseteq \Hilbcl_X^n:=\Hilbc_X^n\cap \Hilbl_X^n\subseteq \Hilbcs_X^n:=\Hilbc_X^n\cap \Hilbs_X^n\subseteq \Hilbc^n_X\subseteq \Hilb^n_X.
\end{equation}


\section{Definition of the Poincar\'e sheaf}\label{S:Poinc}

The aim of this section is to introduce the Poincar\'e sheaf $\ov \P$ on $\bJbar_X^{1-g}\times \bJbar_X^{1-g}$, where $X$ is a reduced connected projective curve of arithmetic genus $g:=p_a(X)$. 
Let us start by describing the restriction of $\ov \P$ to the open subset $\Open:=\bJbar_X^{1-g}\times \bJ_X^{1-g}\cup \bJ_X^{1-g}\times \bJbar_X^{1-g}\subseteq \bJbar_X^{1-g}\times \bJbar_X^{1-g}$ consisting of pairs of torsion-free rank-$1$  simple sheaves $I$ on $X$ of Euler characteristic $1-g$ (or equivalently degree $0$) such that at least one of the two sheaves is a line bundle.

\subsection{The Poincar\'e line bundle $\P$}\label{S:Poinc1}

Consider the open subset $X\times \Open \subseteq X\times \bJbar_X^{1-g}\times \bJbar_X^{1-g}$ and, for any $1\leq i<j\leq 3$, denote by $p_{ij}$ the projection onto the product of the $i$-th and $j$-th factors.
Consider the trivial family of curves
$$p_{23}:X\times\Open\to \Open.$$
For any coherent sheaf $\F$ on $X\times\Open$, flat over $\Open$, the complex $Rp_{23*}(\F)$ is perfect of amplitude $[0,1]$, i.e. there is a Zariski open cover $\Open=\bigcup_{\alpha} U_{\alpha}$ and, for each open subset $U_{\alpha}$, a complex $\G_*^{\alpha}:=\{\G_0^{\alpha}\to \G_1^{\alpha}\}$ of locally free sheaves of finite rank over $U_{\alpha}$ which is quasi-isomorphic to $Rp_{23*}(\F)_{|U_{\alpha}}$ (see \cite[Observation 43]{est1}). The line bundles
$\det(\G_*^{\alpha}):=\det(\G_0^{\alpha})\otimes \det(\G_1^{\alpha})^{-1}$ on $U_{\alpha}$ glue together to give a (well-defined) line bundle on $\Open$, which is denoted by $\D_{p_{23}}(\F)$ and is called the \emph{determinant of cohomology of} $\F$ \emph{with respect to} $p_{23}$ (see \cite[Sec. 6.1]{est1} for details).

Choose now a universal sheaf $\I$ on $X\times \bJbar_X^{1-g}$ as in \S\ref{S:comp-Jac}
and form the line bundle on $\Open$, called the \emph{Poincar\'e line bundle}:
\begin{equation}\label{E:Poin-sheaf}
\P:=\D_{p_{23}}(p_{12}^*\I\otimes p_{13}^*\calI)^{-1}\otimes \D_{p_{23}}(p_{12}^*\I)\otimes \D_{p_{23}}(p_{13}^*\I).
\end{equation}

\begin{remark}\label{R:why-open}
The above definition of $\P$  makes sense since $p_{12}^*\I$ and $p_{13}^*\I$ are coherent sheaves flat over $\Open$ (because $\I$ is coherent and flat over $\bJbar_X^{1-g}$) and $p_{12}^*\I\otimes p_{13}^*(\I)$ is flat over $\Open$
since $p_{12}^*(\I)$ is a line bundle over $X\times \bJ_X^{1-g}\times \bJbar_X^{1-g}$ and $p_{13}^*(\I)$ is a line bundle over $X\times \bJbar_X^{1-g}\times \bJ_X^{1-g}$. However $p_{12}^*(\I)\otimes p_{13}^*(\I)$ is not flat over $\bJbar_X^{1-g}\times \bJbar_X^{1-g}$ (in general), hence definition \eqref{E:Poin-sheaf} does not extend over $\bJbar_X^{1-g}\times \bJbar_X^{1-g}$.
\end{remark}

\begin{remark}\label{R:dep-sheaf}
The above definition of $\P$ is independent of the chosen universal sheaf $\I$ since we are working over
$\Open\subseteq \bJbar_X^{1-g}\times \bJbar_X^{1-g}$.
Indeed, one could define a Poincar\'e line bundle $\P$ on $\bJbar_X\times \bJ_X\cup \bJ_X\times \bJbar_X$ using the same formula \eqref{E:Poin-sheaf}. Then, considering another universal sheaf $\wt{\I}=\I\otimes \pi_2^*(N)$ for some $N\in \Pic(\bJbar_X)$ (see \S\ref{S:comp-Jac}) and defining a new Poincar\'e line bundle
$\wt\P$ with respect to $\wt\I$, one can check that (see \cite[Prop. 2.2]{egk} for a similar computation)
\begin{equation}\label{E:chan-Poin}
\wt{\P}_{|\bJbar_X^{\chi_1}\times \bJ_X^{\chi_2}}=\P_{|\bJbar_X^{\chi_1}\times \bJ_X^{\chi_2}}\otimes p_1^*(N_{|\bJbar_X^{\chi_1}})^{1-g-\chi_2}\otimes p_2^*(N_{|\bJ_X^{\chi_2}})^{1-g-\chi_1}
\end{equation}
for every $\chi_1, \chi_2 \in \bbZ$, where $p_1$ (resp. $p_2$) denote the projection of $\bJbar_X\times \bJ_X$ onto $\bJbar_X$ (resp. $\bJ_X$).



\end{remark}


The following Lemma will be used throughout the following.

\begin{lemma}\label{L:compu-det}
Let $S$ be a scheme and consider the trivial family $p_2:X\times S\to S$. Let $I$ be a rank-$1$ torsion-free sheaf on $X$ and write $I=I_{E_1}\otimes I_{E_2}^{-1}$ as in Lemma \ref{L:sheaves}. Let $\F$ be a coherent sheaf on $X\times S$, flat over $S$, and assume that $\F$ is locally free along $p_1^{-1}(E_i)$ for $i=1,2$. Then
\begin{equation}\label{E:magic-for}
\D_{p_2}(\F\otimes p_1^*I)\otimes \D_{p_2}(\F)^{-1}=\D_{p_2}\left(\F_{|p_1^{-1}(E_2)}\right)\otimes \D_{p_2}\left(\F_{|p_1^{-1}(E_1)}\right)^{-1}.
\end{equation}
\end{lemma}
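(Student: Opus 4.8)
The plan is to reduce everything to the multiplicativity of the determinant of cohomology on short exact sequences of $S$-flat coherent sheaves, a standard property of the formalism recalled in \cite[Sec. 6.1]{est1}: if $0\to \F'\to\F''\to\F'''\to 0$ is a short exact sequence of coherent sheaves on $X\times S$, all flat over $S$, then $\D_{p_2}(\F'')=\D_{p_2}(\F')\otimes\D_{p_2}(\F''')$. Writing $I=I_{E_1}\otimes I_{E_2}^{-1}$ as in Lemma \ref{L:sheaves}, the divisor $E_2$ is Cartier, so $M:=I_{E_2}^{-1}=\O_X(E_2)$ is a line bundle, and I would treat the two twists $p_1^*I=p_1^*I_{E_1}\otimes p_1^*M$ one after the other, using crucially that $E_1$ and $E_2$ are disjoint.

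First I would handle the twist by $I_{E_1}$. Pulling back the structure sequence $0\to I_{E_1}\to\O_X\to\O_{E_1}\to 0$ along $p_1$ and tensoring with $\F$, the term $\operatorname{Tor}_1^{\O_{X\times S}}(\F,p_1^*\O_{E_1})$ vanishes because $\F$ is locally free along $p_1^{-1}(E_1)$, the support of $p_1^*\O_{E_1}$. This produces the short exact sequence
$$0\to \F\otimes p_1^*I_{E_1}\to \F\to \F_{|p_1^{-1}(E_1)}\to 0.$$
Here $\F$ and $\F_{|p_1^{-1}(E_1)}$ are $S$-flat (the latter because $\F$ is locally free along $p_1^{-1}(E_1)$ and $E_1\times S\to S$ is flat), so the long exact $\operatorname{Tor}$-sequence forces the kernel $\G:=\F\otimes p_1^*I_{E_1}$ to be $S$-flat as well. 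Multiplicativity then gives $\D_{p_2}(\G)=\D_{p_2}(\F)\otimes\D_{p_2}(\F_{|p_1^{-1}(E_1)})^{-1}$.

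Next I would twist $\G$ by $M$. Since $E_1$ and $E_2$ are disjoint, $I_{E_1}\cong\O_X$ near $E_2$, so $\G$ is still locally free along $p_1^{-1}(E_2)$ and $\G_{|p_1^{-1}(E_2)}\cong\F_{|p_1^{-1}(E_2)}$. Tensoring $0\to\O_X\to M\to M_{|E_2}\to 0$ with $\G$ (the $\operatorname{Tor}_1$ obstruction again vanishing by local freeness along $p_1^{-1}(E_2)$) yields
$$0\to \G\to \G\otimes p_1^*M\to \F_{|p_1^{-1}(E_2)}\otimes p_1^*(M_{|E_2})\to 0.$$
Because $E_2$ is a $0$-dimensional scheme its Picard group is trivial, hence $M_{|E_2}\cong\O_{E_2}$ and the cokernel is simply $\F_{|p_1^{-1}(E_2)}$. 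Since $\G\otimes p_1^*M=\F\otimes p_1^*I$, multiplicativity gives $\D_{p_2}(\F\otimes p_1^*I)=\D_{p_2}(\G)\otimes\D_{p_2}(\F_{|p_1^{-1}(E_2)})$; substituting the formula for $\D_{p_2}(\G)$ and rearranging produces exactly \eqref{E:magic-for}.

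The computation is essentially formal once multiplicativity is available; the only points genuinely requiring care are the two flatness/exactness verifications — that the relevant $\operatorname{Tor}_1$ terms vanish (guaranteed by local freeness of $\F$ along $p_1^{-1}(E_i)$) and that the intermediate sheaf $\G$ is $S$-flat — together with the two elementary identifications $M_{|E_2}\cong\O_{E_2}$ (triviality of the Picard group of the finite scheme $E_2$) and $\G_{|p_1^{-1}(E_2)}\cong\F_{|p_1^{-1}(E_2)}$ (disjointness of $E_1$ and $E_2$). I expect no real obstacle beyond bookkeeping these hypotheses, which is precisely why the lemma is stated with the local-freeness and disjointness assumptions.
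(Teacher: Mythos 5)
Your proposal is correct and follows essentially the same route as the paper: both arguments reduce to the two short exact sequences $0\to \F\otimes p_1^*I_{E_1}\to \F\to \F_{|p_1^{-1}(E_1)}\to 0$ and $0\to \F\otimes p_1^*I_{E_1}\to \F\otimes p_1^*I\to \F_{|p_1^{-1}(E_2)}\to 0$ (you build the second by twisting sequentially with $\O_X(E_2)$, the paper by tensoring the structure sequence of $E_2$ with $I$, but the resulting sequences and the use of additivity of the determinant of cohomology are identical). Your explicit verifications of the $\operatorname{Tor}_1$ vanishing, the $S$-flatness of the intermediate sheaf, and the triviality of $M_{|E_2}$ are correct and merely make explicit what the paper leaves implicit.
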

\begin{proof}
Consider the exact sequences associated to the effective divisors $E_i\subseteq X$:
\begin{equation}\label{E:succE1}
0\to I_{E_1} \to \O_{X} \to \O_{E_1}\to 0,
\end{equation}
\begin{equation}\label{E:succE2}
0\to I_{E_2}=\O(-E_2) \to \O_{X} \to \O_{E_2}\to 0.
\end{equation}
Tensoring the sequence \eqref{E:succE2} with $I$ and using the fact that $I=I_{E_1}\otimes I_{E_2}^{-1}$, we get the new sequence
\begin{equation}\label{E:succE12}
0\to I_{E_2}\otimes I=I_{E_1} \to I \to I_{|E_2}=\O(E_2)_{|E_2}=\O_{E_2}\to 0,
\end{equation}
which remains exact since $I_{E_2}^{-1}=\O(E_2)$ is a line bundle and $(I_{E_1})_{|E_2}=\O_{E_2}$ because $E_1$ and $E_2$ have disjoint supports by construction.
By pulling back via $p_{1}: X\times S \to X$ the exact sequences \eqref{E:succE1} and \eqref{E:succE12} and tensoring
them with $\F$, we get the following two sequences which remain exact by the hypothesis on $\F$:
\begin{equation*}\label{E:seq-F}
\begin{sis}
& 0\to \F\otimes p_1^*I_{E_1} \to \F \to \F_{|p_1^{-1}(E_1)}\to 0, \\
& 0\to \F\otimes p_1^*I_{E_1} \to \F\otimes p_1^*I \to \F_{|p_1^{-1}(E_2)}\to 0.
\end{sis}
\end{equation*}
Using the additivity of the determinant of cohomology (see \cite[Prop. 44(4)]{est1}), we get
\begin{equation*}
\begin{sis}
& \D_{p_2}\left(\F\right)=
\D_{p_2}\left(\F\otimes p_{1}^{*}I_{E_1}\right) \otimes \D_{p_2}\left(\F_{|p_{1}^{-1}(E_1)}\right), \\
& \D_{p_2}\left(\F\otimes p_{1}^*I\right)= \D_{p_2}\left(\F\otimes p_{1}^*I_{E_1}\right)\otimes  \D_{p_2}\left(\F_{|p_{1}^{-1}(E_2)}\right).\\
\end{sis}
\end{equation*}
By taking the difference of the above two equalities, we get the desired formula \eqref{E:magic-for}.

\end{proof}

\begin{cor}\label{C:compdet}
Same assumptions as in Lemma \ref{L:compu-det}. Moreover, let $L$ be a line bundle on $X$.
Then
\begin{equation*}
\D_{p_2}(\F\otimes p_1^*L\otimes p_1^*I)= \D_{p_2}(\F\otimes p_1^*L)\otimes
\D_{p_2}(\F\otimes p_1^*I)\otimes \D_{p_2}(\F)^{-1}.
\end{equation*}
\end{cor}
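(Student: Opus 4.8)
The plan is to deduce the corollary from Lemma \ref{L:compu-det} by applying that lemma twice: once to $\F$ and once to the twisted sheaf $\F\otimes p_1^*L$. The guiding observation is that Lemma \ref{L:compu-det} expresses the ``difference'' $\D_{p_2}(\G\otimes p_1^*I)\otimes\D_{p_2}(\G)^{-1}$ entirely in terms of the two restrictions $\G_{|p_1^{-1}(E_1)}$ and $\G_{|p_1^{-1}(E_2)}$. So if I can show that these restrictions are unchanged (up to isomorphism) when $\G$ is replaced by $\G\otimes p_1^*L$, the two applications will have identical right-hand sides, and equating left-hand sides will produce exactly the claimed identity.

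First I would verify that Lemma \ref{L:compu-det} does apply to $\G:=\F\otimes p_1^*L$. Since $p_1^*L$ is a line bundle, hence locally free, the sheaf $\F\otimes p_1^*L$ is again flat over $S$ and is again locally free along each $p_1^{-1}(E_i)$; thus the hypotheses of the lemma hold for $\G$ verbatim. Applying the lemma to $\F$ and then to $\G$ yields
\begin{equation*}
\D_{p_2}(\F\otimes p_1^*I)\otimes \D_{p_2}(\F)^{-1}=\D_{p_2}\left(\F_{|p_1^{-1}(E_2)}\right)\otimes \D_{p_2}\left(\F_{|p_1^{-1}(E_1)}\right)^{-1}
\end{equation*}
together with the analogous identity in which every occurrence of $\F$ is replaced by $\F\otimes p_1^*L$.

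The key step is the comparison of the two right-hand sides. Each $E_i$ is a $0$-dimensional closed subscheme of $X$, hence the spectrum of an Artinian $k$-algebra, which is a finite product of local Artinian rings; such a scheme has trivial Picard group, so $L_{|E_i}\cong \O_{E_i}$. Pulling back along the projection $p_1^{-1}(E_i)=E_i\times S\to E_i$ gives $(p_1^*L)_{|p_1^{-1}(E_i)}\cong \O_{p_1^{-1}(E_i)}$, and therefore $(\F\otimes p_1^*L)_{|p_1^{-1}(E_i)}\cong \F_{|p_1^{-1}(E_i)}$ for $i=1,2$. Since the determinant of cohomology is functorial under isomorphisms of sheaves, the corresponding determinant line bundles coincide, so the two right-hand sides are equal. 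Equating the left-hand sides then gives
\begin{equation*}
\D_{p_2}(\F\otimes p_1^*L\otimes p_1^*I)\otimes \D_{p_2}(\F\otimes p_1^*L)^{-1}=\D_{p_2}(\F\otimes p_1^*I)\otimes \D_{p_2}(\F)^{-1},
\end{equation*}
and a trivial rearrangement is precisely the asserted formula.

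I do not anticipate any real obstacle: the entire computational content is already carried by Lemma \ref{L:compu-det}, and the only genuine input is the triviality of $L$ along the $0$-dimensional divisors $E_i$. The one point to state carefully is that all these equalities of determinant line bundles are to be read as canonical isomorphisms in $\Pic(S)$, so that the transition from the sheaf isomorphism $(\F\otimes p_1^*L)_{|p_1^{-1}(E_i)}\cong \F_{|p_1^{-1}(E_i)}$ to an equality of the associated determinants is justified by the functoriality of $\D_{p_2}$ under isomorphisms.
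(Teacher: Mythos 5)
Your proposal is correct and follows essentially the same route as the paper: the paper's proof likewise applies Lemma \ref{L:compu-det} to both $\F$ and $\F\otimes p_1^*L$ and uses the identity $p_1^*L_{|p_1^{-1}(E_i)}=p_1^*(L_{|E_i})=\O_{p_1^{-1}(E_i)}$ to match the right-hand sides. Your additional remarks (checking the hypotheses for $\F\otimes p_1^*L$ and justifying $L_{|E_i}\cong\O_{E_i}$ via the triviality of the Picard group of an Artinian scheme) simply make explicit what the paper leaves implicit.
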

\begin{proof}
It follows from the previous Lemma \ref{L:compu-det} together with the fact that (for $i=1,2$):
$$p_1^*L_{|p_1^{-1}(E_i)}=p_1^*(L_{|E_i})=p_1^*\O_{E_i}=\O_{p_1^{-1}(E_i)}.$$
\end{proof}

\subsection{Definition of the Poincar\'e sheaf  $\ov\P$ }\label{S:Poinc2}

In this subsection we construct a Poincar\'e sheaf $\ov\P$ on $\bJbar_X^{1-g}\times \bJbar_X^{1-g}$, which is an extension of $\P$.
The definition of $\ov\P$ is as follows.

\begin{defi}\label{D:Poinc}
Let $X$ be a reduced and connected curve.
Denote by
$$j:\Open=\bJ_X^{1-g}\times \bJbar_X^{1-g}\cup \bJbar_X^{1-g}\times \bJ_X^{1-g}\hookrightarrow \bJbar_X^{1-g}\times \bJbar_X^{1-g}$$
the natural inclusion.
The $\O_{\bJbar_X^{1-g}\times \bJbar_X^{1-g}}$-module
$$\ov\P:=j_*(\P)$$
is called the \emph{Poincar\'e sheaf}.
\end{defi}

If $X$ has planar singularities, then the Poincar\'e sheaf enjoys the following properties.

\begin{thm}\label{T:Poinc}
Let $X$ be a reduced connected curve with planar singularities of arithmetic genus $g:=p_a(X)$. Assume that either ${\rm char}(k)=0$ or  ${\rm char}(k)>g$.
\begin{enumerate}[(i)]
\item \label{T:Poinc1} $\ov\P$ is a maximal Cohen-Macaulay (coherent) sheaf on $\bJbar_X^{1-g}\times \bJbar_X^{1-g}$;
\item \label{T:Poinc2} $\ov\P$ is  flat with respect to the second projection $p_2:\bJbar_X^{1-g}\times \bJbar_X^{1-g}\to \bJbar_X^{1-g}$ and, for every $I\in \bJbar_X^{1-g}$, the restriction
$\ov \P_{I}:=\ov\P_{|\bJbar_X^{1-g}\times \{I\}}$ is a maximal Cohen-Macaulay sheaf on $\bJbar_X^{1-g}$.
\end{enumerate}
\end{thm}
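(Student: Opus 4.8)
The plan is to follow the strategy of Arinkin \cite{arin2}, replacing the global Abel map by the twisted Abel maps of Proposition \ref{P:Abel-sur} and transferring the Cohen--Macaulay property from M. Haiman's isospectral Hilbert scheme. First I would record two reductions. Since $\bJ_X^0$ is the smooth (hence dense) locus of $\bJbar_X^0$ by Fact \ref{F:propJbig}, the complement of $\Open$ in $\bJbar_X^0\times\bJbar_X^0$ is $(\bJbar_X^0\setminus\bJ_X^0)\times(\bJbar_X^0\setminus\bJ_X^0)$ and so has codimension at least two; as $\bJbar_X^0\times\bJbar_X^0$ is reduced with locally complete intersection singularities (again Fact \ref{F:propJbig}), hence $S_2$, the pushforward $\ov\P=j_*\P$ of the line bundle $\P$ is coherent and satisfies $S_2$. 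Next, being maximal Cohen--Macaulay is local and is both preserved and reflected by smooth surjective morphisms, via the depth formula $\depth_y f^*\F=\depth_{f(y)}\F+\dim\mathcal{O}_{Y,y}/\m_{f(y)}\mathcal{O}_{Y,y}$ for $f$ smooth. Hence, covering $\bJbar_X^0$ by the finite-type opens $U_\beta$ of Proposition \ref{P:Abel-sur} and forming the products $A_{M_\beta}\times A_{M_{\beta'}}\colon V_\beta\times V_{\beta'}\to U_\beta\times U_{\beta'}$, which are smooth, surjective, and whose targets cover $\bJbar_X^0\times\bJbar_X^0$, it suffices to prove that $(A_{M_\beta}\times A_{M_{\beta'}})^*\ov\P$ is maximal Cohen--Macaulay on $V_\beta\times V_{\beta'}\subseteq \Hilbs_X^{p_a(X)}\times\Hilbs_X^{p_a(X)}$.

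The second step is to compute this pullback. Using that $A_M(D)=I_D\otimes M$ and that the universal sheaf restricts to the universal ideal sheaf $\mathcal{I}_{\mathcal{D}}$ along the Abel map \eqref{E:Abel-map}, the defining formula \eqref{E:Poin-sheaf} together with the determinant-of-cohomology identities of Lemma \ref{L:compu-det} and Corollary \ref{C:compdet} expresses $(A_M\times A_{M'})^*\P$, over the locus where one of $D,D'$ is a Cartier divisor, as a fixed line bundle twisted by the determinant attached to the intersection of the two universal subschemes $\mathcal{D},\mathcal{D}'$ on $X$. After choosing a smooth projective surface $S\supset X$ (Fact \ref{F:cur-sur}) and using the equality $\Hilbc_X^n=\Hilbc_S^n\cap\Hilb_X^n$ of \eqref{E:curXS}, I would identify this expression, over the curvilinear locus, with the direct image under the finite flat degree-$n!$ map $\psi$ of Fact \ref{F:flag-iso} of a line bundle on the isospectral Hilbert scheme(s) $\wt\Hilb_S^n$.

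Granting this identification, the Cohen--Macaulay property on the curvilinear locus follows from Haiman's theorem (Fact \ref{F:iso-Hilb}): since $\wt\Hilb_S^n$ is Gorenstein, in particular Cohen--Macaulay, and $\psi$ is finite and flat, the direct image of a line bundle is maximal Cohen--Macaulay on $\Hilbc_X^n\times\Hilbc_X^n$. To pass to all of $V_\beta\times V_{\beta'}$, I would invoke Lemma \ref{L:cod-curv2}: the non-curvilinear locus has codimension at least two in $\Hilb_X^n$, so the locus where at least one factor is non-curvilinear has codimension at least two in the product. Since the pullback of $\ov\P$ is $S_2$ (being the pullback along a smooth morphism of an $S_2$ sheaf) and agrees, outside this codimension-$\geq 2$ locus, with the maximal Cohen--Macaulay sheaf just constructed, the two coincide, so the pullback is maximal Cohen--Macaulay everywhere. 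Descending along the smooth surjective maps $A_{M_\beta}\times A_{M_{\beta'}}$ then proves part \eqref{T:Poinc1}.

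For part \eqref{T:Poinc2}, the maximal Cohen--Macaulayness of each fiber $\ov\P_I$ is obtained by the same Abel-map reduction carried out fibrewise, again transferring the property from $\wt\Hilb_S^n$. The flatness of $\ov\P$ over the second projection is the delicate point, because the base $\bJbar_X^0$ is \emph{singular}, so that the naive implication ``total space and $p_2$-fibres Cohen--Macaulay of complementary dimension $\Rightarrow$ flat'' fails in general (the normalization of a node is the standard counterexample). To circumvent this I would pass to the universal Poincar\'e sheaf over the effective semiuniversal deformation: the analogue of parts \eqref{T:Poinc1} and of the fibrewise statement gives a maximal Cohen--Macaulay sheaf whose second projection has base $\bJbar_{\X}^0$, which is \emph{smooth}, hence regular, by Fact \ref{F:smoothuni}. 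Over a regular base, miracle flatness applies, because the fibres of the second projection have constant dimension $p_a(X)$; flatness of $\ov\P$ over $\bJbar_X^0$ is then recovered by specializing to the central fibre over $[\m_X]\in\Spec R_X$, restriction by the regular sequence cutting out $[\m_X]$ preserving flatness thanks to the Cohen--Macaulay property. I expect the two genuine obstacles to be precisely the explicit identification of the Abel-map pullback with a direct image from $\wt\Hilb_S^n$, where the determinant-of-cohomology bookkeeping of Lemma \ref{L:compu-det} must be matched with Haiman's geometry, and the $p_2$-flatness over the singular base, both of which ultimately rest on Haiman's results.
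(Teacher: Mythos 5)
Your overall strategy (Abel maps, Haiman's isospectral Hilbert scheme, and a codimension-two extension argument) is the right one, but the way you set it up has a gap at the crucial step. You pull $\ov\P$ back along $A_{M_\beta}\times A_{M_{\beta'}}$ to $V_\beta\times V_{\beta'}\subseteq \Hilbs_X^{p_a(X)}\times \Hilbs_X^{p_a(X)}$ and then try to match the result, over the curvilinear locus, with a pushforward from the isospectral Hilbert scheme. But the sheaf that Arinkin's machinery actually produces, namely $\Q^n$ of \eqref{E:sheaf-Q}, lives on $\Hilb_X^n\times\bJbar_X^0$ --- one Hilbert-scheme factor and one Jacobian factor --- because its construction tensors together $n$ pullbacks of the universal sheaf $\I$ on $X\times\bJbar_X^0$ and then takes sign-isotypic parts along $\psi\times\id$. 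There is no off-the-shelf maximal Cohen--Macaulay sheaf on $\Hilb_X^n\times\Hilb_X^n$ extending $(A_{M}\times A_{M'})^*\P$, and ``the direct image under $\psi$ of a line bundle on $\wt\Hilb_S^n$'' does not even produce a sheaf on a product of two Hilbert schemes. The paper accordingly applies the Abel map only in the \emph{first} factor, via $A_{M_\beta}\times\id\colon V_\beta\times\bJbar_X^0\to U_\beta\times\bJbar_X^0$, compares $\Q$ with $(A_{M_\beta}\times\id)^*\P$ on the open subset $W_\beta$ (Fact \ref{F:Q-Q'} and Proposition \ref{P:comp-QP}), and then \emph{descends} $\Q$ to the desired extension using Lemma \ref{L:descent}.

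The second, related, problem is logical: your concluding step argues that since the pullback of $\ov\P$ is $S_2$ and agrees outside a codimension-two locus with a maximal Cohen--Macaulay sheaf, the two coincide. That inference is valid only if the comparison sheaf is already defined, and maximal Cohen--Macaulay, on the \emph{whole} space; if it is only constructed over the curvilinear locus, then its pushforward across the bad locus need not be Cohen--Macaulay (nor even coherent), and producing the Cohen--Macaulay extension across that locus is precisely the content of the theorem --- it is exactly what Haiman's results about the full $\wt{\Hilb}^n_S$, rather than just its curvilinear part $\Flag_S^n$, are needed for, via Fact \ref{F:prop-Q}\eqref{F:prop-Q2}. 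Relatedly, the coherence of $j_*\P$ is not a formal consequence of $\bJbar_X^0\times\bJbar_X^0$ being $S_2$; in the paper it comes out a posteriori, once the coherent Cohen--Macaulay extension has been built by descent and identified with $j_*\P$ via \cite[Thm. 5.10.5]{EGAIV2}. Finally, for part \eqref{T:Poinc2} your detour through the universal family and miracle flatness is unnecessary, and would in any case require first proving part \eqref{T:Poinc1} for the universal Poincar\'e sheaf over $\Spec R_X$: the flatness of $\Q^n$ over the second factor $\bJbar_X^0$ is part of Arinkin's input (Fact \ref{F:prop-Q}\eqref{F:prop-Q3}), and both the flatness and the fibrewise Cohen--Macaulayness of $\ov\P$ then follow by faithfully flat descent along $A_{M_\beta}\times\id$, which is how the paper concludes.
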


\begin{remark}\label{R:CM-ext}
Under the assumptions of Theorem \ref{T:Poinc}, we observe that, since the complement of $\Open$
inside $\bJbar_X^{1-g}\times \bJbar_X^{1-g}$ has codimension greater or equal than two by Fact \ref{F:propJbig}\eqref{F:propJbig2}, the sheaf $\ov\P$ is the Cohen-Macaulay extension of $\P$; in other words, $\ov\P$
can be characterized as  the unique Cohen-Macauly sheaf on $\bJbar_X^{1-g}\times \bJbar_X^{1-g}$ whose restriction to $\Open$  is equal to $\P$ (see \cite[Thm. 5.10.5]{EGAIV2}).
\end{remark}

In proving Theorem \ref{T:Poinc}, we will adapt the strategy used by D. Arinkin \cite{arin2} to prove the same result for integral curves: we will first construct a sheaf $\Q^n$ on
$\Hilb_X^n\times \bJbar_X^{1-g}$ for any $n\geq 1$ and then we will descend it to a sheaf $\ov\P$ on $\bJbar_X^{1-g}\times \bJbar_X^{1-g}$ using the Abel map \eqref{E:Abel-map}.

\subsubsection{{\bf The sheaf $\Q$ on $\displaystyle \left(\coprod_{n\in \bbN} \Hilb^n_X\right)\times \bJbar_X^{1-g}$}}\label{S:sheafQ}

Choose an embedding $i:X\hookrightarrow S$ as in Fact \ref{F:cur-sur}, fix an integer $n\in \bbN$ and consider the following diagram (using the notations of \S\ref{S:Hilb-sur}):
\begin{equation}\label{E:diag-Q}
\xymatrix{
\Hilb^n_X\times \bJbar_X^{1-g} \ar@{^{(}->}[r] & \Hilb^n_S\times \bJbar_X^{1-g} \ar[d]^{p_1} &
\wt{\Hilb}^n_S\times \bJbar_X^{1-g} \ar[l]_{\psi\times \id} \ar[r]^{\sigma\times \id} & S^n\times \bJbar_X^{1-g} &
X^n\times \bJbar_X^{1-g} \ar@{_{(}->}[l]_{i^n\times \id}\\
& \Hilb^n_S &&&
}
\end{equation}
where the maps $\sigma\times \id$ and $i^n\times \id$ are clearly $\Sigma_n$-equivariant.

Choose a universal sheaf $\I$ on $X\times \bJbar_X^{1-g}$ as in \S\ref{S:comp-Jac} and define a sheaf
$\I^n$ on $X^n\times \bJbar_X^{1-g}$ by:
\begin{equation}\label{E:sheaf-In}
\I^n:=p_{1,n+1}^*(\I)\otimes \ldots \otimes p_{n,n+1}^*(\I),
\end{equation}
where $p_{i,n+1}:X^n\times \bJbar_X^{1-g}\to X\times \bJbar_X^{1-g}$ denotes the projection onto the $i$-th and $(n+1)$-th factor. Observe that $\I^n$ is clearly $\Sigma_n$-equivariant.

Define now a coherent sheaf $\Q^n$ on $\Hilb_S^n\times \bJbar_X^{1-g}$ by the formula
\begin{equation}\label{E:sheaf-Q}
\Q^n:=\left[(\psi\times \id)_*(\sigma\times \id)^*(i^n\times \id)_* \I^n \right]^{\sign}\otimes p_1^*(\det \A)^{-1},
\end{equation}
where $\A$ is the locally free rank $n$ sheaf on $\Hilb_S^n$ defined after the diagram \eqref{E:univ-div}
and the upper index $\sign$ stands for the space of anti-invariants with respect to the natural action of $\Sigma_n$.

The sheaf $\Q^n$ enjoys the following properties, as shown by Arinkin in \cite[Prop. 4.1, Section 4.1]{arin2}\footnote{Arinkin stated his results in \cite{arin2} under the assumption that ${\rm char}(k)=0$. However,  his results are true also if ${\rm char}(k)>n$ as observed by M. Groechenig in \cite[Rmk. 4.9]{Gro1}.}.

\begin{fact}[Arinkin]\label{F:prop-Q}
Assume that either ${\rm char}(k)=0$ or ${\rm char}(k)>n$.
Let $X$ be a connected projective reduced curve with planar singularities and choose
 an embedding $i:X\hookrightarrow S$ as in Fact \ref{F:cur-sur}. Then the sheaf $\Q^n$ defined by \eqref{E:sheaf-Q} satisfies the following properties:
\begin{enumerate}[(i)]
\item \label{F:prop-Q1} $\Q^n$ is supported schematically on $\Hilb^n_X\times \bJbar_X^{1-g}$ and it does not depend on the chosen embedding $i:X\hookrightarrow S$.
\item \label{F:prop-Q2} $\Q^n$ is a maximal Cohen-Macaulay sheaf on $\Hilb_X^n\times \bJbar_X^{1-g}$.
\item \label{F:prop-Q3} $\Q^n$ is flat over $\bJbar_X^{1-g}$.
\item \label{F:prop-Q4} For any $I\in \bJbar_X^{1-g}$, the restriction $\Q^n_{|\Hilb_X^n\times \{I\}}$ is a maximal Cohen-Macaulay sheaf on $\Hilb_X^n$.
\end{enumerate}
\end{fact}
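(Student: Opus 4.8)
The plan is to follow Arinkin's strategy, whose engine is Haiman's theorem on the isospectral Hilbert scheme (Fact \ref{F:iso-Hilb}), and to reduce every assertion to a single geometric input: that, after transporting $X^n$ to $\wt{\Hilb}_S^n$, one gets a \emph{dimensionally proper} intersection. Since $X$ is a Cartier divisor in the smooth surface $S$, the product $X^n\subseteq S^n$ is a regular embedding of codimension $n$, cut out locally by a regular sequence of length $n$. First I would bound $\dim \sigma^{-1}(X^n)\subseteq \wt{\Hilb}_S^n$. Its image under the finite map $\psi$ lands in the locus of $D\in \Hilb_S^n$ with $\supp D\subseteq X$, that is in $\HC^{-1}(\Sym^n X)$; stratifying $\Sym^n X$ by the partition type of the support and using Brian\c con's count (Fact \ref{F:Hilb-Chow}) for the punctual fibres, every stratum contributes dimension exactly $n$, whence $\dim \sigma^{-1}(X^n)=n=2n-n$. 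Because $\wt{\Hilb}_S^n$ is Cohen--Macaulay of dimension $2n$ (Fact \ref{F:iso-Hilb}), the $n$ local equations of $X^n$ pull back to a regular sequence on $\wt{\Hilb}_S^n$; hence $\operatorname{Tor}^{\O_{S^n}}_{>0}(\O_{X^n},\O_{\wt{\Hilb}_S^n})=0$ and the derived pullback $(\sigma\times\id)^*(i^n\times\id)_*\I^n$ is a genuine sheaf, maximal Cohen--Macaulay and flat over $\bJbar_X^0$ with $n$-dimensional fibres.

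With this in hand, the Cohen--Macaulay statements \eqref{F:prop-Q2} and \eqref{F:prop-Q4} and the flatness \eqref{F:prop-Q3} follow because each remaining operation preserves maximal Cohen--Macaulayness and flatness over $\bJbar_X^0$. The sheaf $\I^n$ is flat over $\bJbar_X^0$ (its fibres $I^{\boxtimes n}$ have the constant Euler characteristic $\chi(I)^n$ and are maximal Cohen--Macaulay on $X^n$, being external products of rank-$1$ torsion-free sheaves on $X$); the pushforward $(\psi\times\id)_*$ along the finite flat morphism $\psi$ is exact and preserves both properties; the passage to the $\sign$-isotypic part is the extraction of a direct summand, which is harmless precisely because $n!$ is invertible under the hypothesis ${\rm char}(k)=0$ or ${\rm char}(k)>n$; and the twist by $p_1^*(\det\A)^{-1}$ changes nothing. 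Part \eqref{F:prop-Q4} can alternatively be deduced from \eqref{F:prop-Q2} and \eqref{F:prop-Q3} using that $\bJbar_X^0$ is Cohen--Macaulay together with Auslander's depth formula for a module flat over its base.

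The heart of the matter, and the step I expect to be the main obstacle, is the \emph{schematic} support assertion in \eqref{F:prop-Q1}: set-theoretically the support is visibly contained in $\{\,\supp D\subseteq X\,\}\times\bJbar_X^0$, but this locus is strictly larger than $\Hilb_X^n\times\bJbar_X^0$ (already for $n=2$ it acquires extra $n$-dimensional components of ``fat points of $S$ transverse to $X$''), so one must show that the anti-invariant construction together with the $\det\A$-twist kills exactly these spurious components and cuts out $\Hilb_X^n$ scheme-theoretically. I would verify this by a local computation over the curvilinear locus, where the flag description of $\psi$ is available: by Fact \ref{F:flag-iso} the morphism $\psi$ restricts to ${}^c\psi:\Flag_S^n\to \Hilbc_S^n$ and $\sigma$ to the explicit modular map \eqref{E:sig-cur}, which lets one compute $\Q^n$ directly on the curvilinear subschemes supported on $X$ (both those contained in $X$ and the transverse ones), check that the spurious components contribute zero, and recognize that on the simple curvilinear locus $\Q^n$ agrees, via the Abel map \eqref{E:Abel-map}, with the Poincar\'e line bundle \eqref{E:Poin-sheaf} up to the ambiguity \eqref{E:chan-Poin}. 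Since $\Q^n$ is maximal Cohen--Macaulay, hence satisfies Serre's condition $S_2$, is equidimensional and has no embedded components, and since the non-curvilinear locus has codimension $\geq 2$ in $\Hilb_X^n$ (Lemma \ref{L:cod-curv2}), this curvilinear computation propagates to determine both the schematic support and the sheaf on all of $\Hilb_X^n\times\bJbar_X^0$; the same codimension-$\geq 2$ extension yields independence of the embedding $i:X\hookrightarrow S$, because over the curvilinear locus the object recovered is intrinsic to $X$ and a maximal Cohen--Macaulay extension across a codimension-$\geq 2$ closed subset is unique.
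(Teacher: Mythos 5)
The paper itself offers no proof of this statement: it is quoted from Arinkin (\cite[Prop.~4.1, Sec.~4.1]{arin2}, with Groechenig's remark on positive characteristic), so strictly there is no in-paper argument to compare against. Your reconstruction is essentially Arinkin's actual proof: the dimension count $\dim \HC^{-1}(\Sym^n X)=n$ via Brian\c con (Fact \ref{F:Hilb-Chow}) makes $\sigma^{-1}(X^n)$ a proper intersection in the Cohen--Macaulay scheme $\wt{\Hilb}^n_S$ (Fact \ref{F:iso-Hilb}), whence the derived pullback is a Cohen--Macaulay sheaf flat over $\bJbar_X^0$; Cohen--Macaulayness and flatness then descend through the finite flat $\psi$, the sign-isotypic summand (where $n!$ invertible is used), and the line-bundle twist; and the schematic support and independence of $i:X\hra S$ come from a curvilinear computation propagated by $S_2$-extension across codimension two. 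Two points deserve one more line each. First, the Tor-vanishing for $(i^n\times\id)_*\I^n$ does not formally follow from $\operatorname{Tor}^{\O_{S^n}}_{>0}(\O_{X^n},\O_{\wt{\Hilb}_S^n})=0$, since $\I^n$ is not locally free on $X^n$; you need the Peskine--Szpiro acyclicity lemma applied to a length-$n$ free resolution of the maximal Cohen--Macaulay fibre $I^{\boxtimes n}$ over the regular $\O_{S^n}$ (your dimension count is exactly the hypothesis it requires, so this is expository rather than substantive). Second, the announced curvilinear computation that kills the spurious components of $\HC^{-1}(\Sym^n X)$ and identifies the sheaf intrinsically is precisely the content of Fact \ref{F:Q-Q'} (Arinkin's Lemma 3.6 and Prop.~4.4); it is the genuinely computational heart of part (i) and remains a black box in your sketch, as it does in the paper. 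Your observation that a Cohen--Macaulay sheaf with no embedded points is schematically supported on $\Hilb^n_X\times\bJbar_X^0$ once this holds on a dense open subset of its support is correct and is the right way to conclude.
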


Denote by $\Q$ the sheaf on $\displaystyle \left(\coprod_{n\in \bbN} \Hilb^n_X\right)\times \bJbar_X^{1-g}$ which is equal to $\Q^n$ on $\Hilb_X^n\times \bJbar_X^{1-g}$.

\subsubsection{{\bf The sheaf $\Q'$ on $\displaystyle \left(\coprod_{n\in \bbN} \Hilb^n_X\right)\times \bJbar_X^{1-g}$}}\label{S:sheafQ'}

The restriction of $\Q^n$ to $\Hilbc^n_X\times \bJbar_X^{1-g}$ coincides the restriction of another sheaf $\Q'^n$ on $\Hilb_X^n\times \bJbar_X^{1-g}$ that we now introduce.

Consider the universal divisor $\D\subseteq \Hilb_X^n\times X$; see \eqref{E:univ-div}.
Recall that  $\A:=h_*\O_{\D}$ is a coherent sheaf of algebras on $\Hilb^n_X$ which is locally free
of rank $n$. Denote by $\A^{\times}$ the subsheaf of $\A$ of invertible elements. Clearly, $\A^{\times}$ is the sheaf of sections of a flat abelian group scheme over $\Hilb_X^n$, whose fiber over
$D \in \Hilb_X^n$ is canonically isomorphic to the group $k[D]^{\times}$ of invertible elements of the algebra $k[D]$ of regular functions on $D$.
Clearly $\A^{\times}$ acts on $\A$ and therefore also on the line bundle $\det \A$; the action of $\A^{\times}$ on $\det \A$ is given by the norm character $\N:\A^{\times}\to \O^{\times}$.

Consider the pull-back $p_1^{-1}(\A^{\times})$ (resp. $p_1^{-1}(\A)$) of $\A^{\times}$ (resp. $\A$) to $\Hilb_X^n\times\bJbar_X^{1-g}$. For any sheaf $\F$ of $p_1^{-1}(\A)$-algebras on
$\Hilb_X^n\times \bJbar_X^{1-g}$, we will denote by $\F_{\N}$  the maximal  quotient of $\F$ on which $p_1^{-1}(\A^{\times})$ acts via the norm character $\N$.

Define now a coherent sheaf $\Q'^n$ on $\Hilb_X^n\times \bJbar_X^{1-g}$ by the formula
\begin{equation}\label{E:sheaf-Q'}
\Q'^n:=\left[\bigwedge^n(h\times \id)_*(f\times \id)^* \I \right]_{\N}\otimes p_1^*(\det \A)^{-1}=
\end{equation}
\begin{equation*}
=\left[\bigwedge^n(h\times \id)_*(f\times \id)^* \I \right]_{\N} \otimes \left[\bigwedge^n(h\times \id)_*
\O_{\D\times \bJbar_X^{1-g}}\right] ,
\end{equation*}
where $\I$ is a universal sheaf  on $X\times \bJbar_X^{1-g}$ as in \S\ref{S:comp-Jac} and where the maps involved in the above formula are collected in the diagram below.

\begin{equation*}
\xymatrix{
&& \D\times \bJbar_X^{1-g} \ar@{^{(}->}[r]\ar[dl]_{h\times \id}\ar[dr]^{f\times \id} & \Hilb_X^n\times X\times \bJbar_X^{1-g}\\
\Hilb_X^n & \Hilb_X^n \times \bJbar_X^{1-g}\ar[l]_{p_1}& & X\times \bJbar_X^{1-g}
}
\end{equation*}

\begin{remark}\label{R:Q'-lb}
The restriction of $\Q'^n$ to the open subset $\Hilb_X^n\times \bJ_X^{1-g}\cup \Hilbr_X^n\times \bJbar_X^{1-g}\subseteq \Hilb_X^n\times \bJbar_X^{1-g}$ is a line bundle and is equal to \begin{equation}\label{E:restr-Q'}
\Q'^n_{|\Hilb_X^n\times \bJ_X^{1-g}\cup \Hilbr_X^n\times \bJbar_X^{1-g}}=\det\left((h\times \id)_*(f\times \id)^* \I\right) \otimes \det((h\times \id)_*\O_{\D\times \bJbar_X^{1-g}}) ^{-1}.
\end{equation}
Indeed, the universal sheaf $\I$ is a line bundle on the open subset
$$(f\times \id)(h\times \id)^{-1}\left(\Hilb_X^n\times \bJ_X^{1-g}\cup \Hilbr_X^n\times \bJbar_X^{1-g}\right)=X\times \bJ_X^{1-g}\cup X_{\rm sm}\times \bJbar_X^{1-g}\subseteq X\times \bJbar_X^{1-g}.$$
This implies that $\bigwedge^n(h\times \id)_*(f\times \id)^* \I=\det\left((h\times \id)_*(f\times \id)^* \I\right) $
is a line bundle on $\Hilb_X^n\times \bJ_X^{1-g}\cup \Hilbr_X^n\times \bJbar_X^{1-g}$ on which $p_1^{-1}(\A^{\times})$ acts via the norm character $\N$. The expression \eqref{E:restr-Q'} now follows.
\end{remark}

The relation between $\Q^n$ and $\Q'^n$ is clarified by the following result of D. Arinkin (see \cite[Prop. 4.4, Section 4.2]{arin2}\footnote{The result in  \cite[Prop. 4.4]{arin2} is stated only for an integral curve $X$ (with locally planar singularities). However, the proof of loc. cit. consists in choosing an embedding of $X$ into a smooth and projective surface $S$ and then using \cite[Lemma 3.6]{arin2} which  is a statement about $\Hilbc_S^n$. Therefore, the same proof works for a reduced curve $X$ with locally planar singularities using Fact \ref{F:cur-sur}.}).

\begin{fact}[Arinkin]\label{F:Q-Q'}
Assume that either ${\rm char}(k)=0$ or ${\rm char}(k)>n$. Let $X$ be a connected projective reduced curve with planar singularities.

The sheaves $\Q^n$ and $\Q'^n$ coincide on the open subset $\Hilbc_X^n\times \bJbar_X^{1-g}\subseteq \Hilb_X^n\times \bJbar_X^{1-g}$.
\end{fact}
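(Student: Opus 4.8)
The plan is to discard the common twist $p_1^*(\det\A)^{-1}$ appearing in both \eqref{E:sheaf-Q} and \eqref{E:sheaf-Q'}, so that the assertion reduces to identifying $[(\psi\times\id)_*(\sigma\times\id)^*(i^n\times\id)_*\I^n]^{\sign}$ with $[\bigwedge^n(h\times\id)_*(g\times\id)^*\I]_{\N}$ over $\Hilbc_X^n\times\bJbar_X^0$. The engine is the modular description of the isospectral Hilbert scheme over the curvilinear locus: by Fact \ref{F:flag-iso}, over $\Hilbc_S^n$ the map $\psi$ becomes the finite flat degree-$n!$ map ${}^c\psi\colon\Flag_S^n\to\Hilbc_S^n$, and by \eqref{E:sig-cur} the map $\sigma$ becomes the modular map ${}^c\sigma$ carrying a flag $(D_1\subset\cdots\subset D_n)$ to the ordered supports of the kernels $\ker(\cO_{D_i}\to\cO_{D_{i-1}})$. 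Using $\Hilbc_X^n=\Hilbc_S^n\cap\Hilb_X^n$ (see \eqref{E:curXS}), this turns the left-hand bracket into a pushforward--pullback computation on the relative flag scheme over $\Hilbc_X^n$.

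First I would identify the pulled-back sheaf. Since $\I^n$ is the external product of $n$ copies of $\I$ (see \eqref{E:sheaf-In}), the modular description of ${}^c\sigma$ shows that $({}^c\sigma\times\id)^*(i^n\times\id)_*\I^n$ is, over a flag, the tensor product of the stalks of $\I$ at the $n$ successive-quotient supports. Pushing this forward along the finite flat map ${}^c\psi\times\id$ yields a $\Sigma_n$-equivariant sheaf, the group permuting the $n$ tensor factors, built out of $\mathcal M:=(h\times\id)_*(g\times\id)^*\I$. On the open locus where $\I$ is a line bundle, $\mathcal M$ is locally free of rank $n$, and the antisymmetrizing idempotent $\frac{1}{n!}\sum_{\tau\in\Sigma_n}\sign(\tau)\,\tau$ --- well-defined precisely because the hypothesis ${\rm char}(k)=0$ or ${\rm char}(k)>n$ makes $n!$ invertible in $k$ --- identifies the sign-anti-invariants with the top exterior power $\bigwedge^n\mathcal M$. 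On that same locus the group scheme $\A^{\times}$ acts on $\bigwedge^n\mathcal M$ through the norm character $\N$, so the norm-quotient $[\,\cdot\,]_{\N}$ is the identity there; thus $\Q^n$ and $\Q'^n$ agree on the line-bundle locus, recovering the explicit determinant of Remark \ref{R:Q'-lb}.

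The main obstacle is that $\I$ is only rank-$1$ torsion-free, so over the non-line-bundle locus $\mathcal M$ fails to be locally free of rank $n$, the identity $(\mathcal M^{\otimes n})^{\sign}\cong\bigwedge^n\mathcal M$ breaks down, and both the $\sign$-projection and the norm-quotient $[\,\cdot\,]_{\N}$ do genuine work in cutting out the correct rank-$1$ piece; reconciling the two over this locus is the heart of the matter. My plan is to finish by extension rather than by a direct comparison over the bad locus. The isomorphism is already established on the open set $\Hilbc_X^n\times\bJ_X^0\cup\Hilbr_X^n\times\bJbar_X^0$, whose complement inside $\Hilbc_X^n\times\bJbar_X^0$ is exactly $(\Hilbc_X^n\setminus\Hilbr_X^n)\times(\bJbar_X^0\setminus\bJ_X^0)$; this has codimension at least two, since $\Hilbr_X^n$ is dense in the pure-dimensional $\Hilb_X^n$ (Fact \ref{F:Hilb-cur}\eqref{F:Hilb-cur2}) and $\bJ_X^0$ is dense in $\bJbar_X^0$ (Fact \ref{F:propJbig}\eqref{F:propJbig2}). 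As $\Q^n$ is maximal Cohen--Macaulay, hence satisfies $S_2$ (Fact \ref{F:prop-Q}\eqref{F:prop-Q2}), it equals $j_*$ of its restriction to that open set, where $j$ is the inclusion (Remark \ref{R:CM-ext}, \cite[Thm. 5.10.5]{EGAIV2}). It therefore remains only to check that $\Q'^n$ is likewise $S_2$ on the curvilinear locus; granting this, both sheaves are the $j_*$-extension of the same line bundle and so coincide. Establishing the $S_2$-property of $\Q'^n$ --- equivalently, controlling the norm-quotient across the non-line-bundle curvilinear locus --- is exactly the delicate point, and if the extension route proves awkward one can instead follow Arinkin's local analysis \cite[Lemma 3.6]{arin2}, writing $\cO_D\cong\prod_i k[x]/(x^{n_i})$ in curvilinear coordinates and matching the two constructions term by term.
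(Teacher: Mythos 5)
The paper offers no proof of this statement: it is recorded as a Fact with a citation to Arinkin (\cite[Lemma 3.6, Prop. 4.4]{arin2}), so the only thing to measure your sketch against is Arinkin's argument, which is a direct local computation over the whole curvilinear locus (writing $k[D]\cong\prod_i k[x]/(x^{n_i})$ and matching the sign-isotypic component of the pushforward from the flag scheme with the norm-quotient of $\bigwedge^n$, for an arbitrary torsion-free $\I$). Your reduction is sensible and you correctly identify the relevant geometry --- the flag scheme ${}^c\psi:\Flag^n_S\to\Hilbc^n_S$, the modular map ${}^c\sigma$, and the role of the invertibility of $n!$ --- but the route you propose to close the argument has a genuine gap. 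The extension step requires $\Q'^n$ to have depth $\geq 2$ (the $S_2$ property) along the codimension-two locus $(\Hilbc_X^n\setminus\Hilbr_X^n)\times(\bJbar_X^0\setminus\bJ_X^0)$, so that $\Q'^n=j_*(\Q'^n|_U)$. Nothing in the definition \eqref{E:sheaf-Q'} gives this: $\bigwedge^n$ of the non-locally-free sheaf $(h\times\id)_*(g\times\id)^*\I$ followed by the norm-quotient has no a priori depth properties, and in Arinkin's development the Cohen--Macaulayness of $\Q'^n$ on the curvilinear locus is a \emph{consequence} of the identification with $\Q^n$ (which is CM via Haiman's machinery), not an input to it. Assuming it here is therefore essentially circular, as you half-acknowledge.

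A secondary weak point: even on the slice $\Hilbc_X^n\times\bJ_X^0$, where $\I$ is a line bundle, the identification
$\left[({}^c\psi\times\id)_*({}^c\sigma\times\id)^*\I^n\right]^{\sign}\cong\bigwedge^n(h\times\id)_*(g\times\id)^*\I$
is not a formal consequence of applying the antisymmetrizing idempotent to a tensor power. Over $\Hilbr^n_X$ the flag scheme is an \'etale $\Sigma_n$-torsor and the statement is elementary, but over a non-reduced curvilinear $D$ the fibre of ${}^c\psi$ is a single point of length $n!$, and identifying the anti-invariants of the pushforward with the determinant is exactly the local computation of \cite[Lemma 3.6]{arin2}. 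So both the "easy" half of your argument and your fallback for the hard half ultimately reduce to the same local analysis that the paper is citing; as written, the proposal does not constitute an independent proof, and the one step you flag as delicate (the $S_2$ property of $\Q'^n$) is the one that would actually fail, or at least cannot be granted without doing Arinkin's computation anyway.
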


Denote by $\Q'$ the sheaf on $\displaystyle \left(\coprod_{n\in \bbN} \Hilb^n_X\right)\times \bJbar_X^{1-g}$ which is equal to $\Q'^n$ on $\Hilb_X^n\times \bJbar_X^{1-g}$.

\begin{remark}\label{R:Q-I}
The sheaves $\Q^n$ and $\Q'^n$ depend on the choice of the universal sheaf $\I$ on $X\times \bJbar_X^{1-g}$.
By taking another universal sheaf $\wt{\I}=\I\otimes \pi_2^*(N)$ for some $N\in \Pic(\bJbar_X^{1-g})$ (see \S\ref{S:comp-Jac}) and defining $\wt{\Q}^n$ and $\wt{\Q}'^n$ by replacing $\I$ with $\wt{\I}$ in formulas \eqref{E:sheaf-Q} and \eqref{E:sheaf-Q'}, then we have that
$$\begin{sis}
& \wt{\Q}^n=\Q^n\otimes \pi_2^*(N)^{\otimes n}, \\
& \wt{\Q}'^n=\Q'^n\otimes \pi_2^*(N)^{\otimes n}.
\end{sis}$$

\end{remark}

\subsubsection{{\bf The relation between $\Q'$ and $\P$}}\label{S:Q'-P}

We want now to compare the sheaf $\Q'$ to the Poincar\'e line bundle $\P$ on
$\displaystyle \Open= \bJ_X^{1-g}\times \bJbar_X^{1-g}\cup \bJbar_X^{1-g}\times \bJ_X^{1-g}$ (see \S\ref{S:Poinc1}) via the Abel map of \eqref{E:Abel-map}.

Consider an open cover $\bJbar_X^{1-g}=\bigcup_{\beta} U_{\beta}$ as in Fact \ref{F:Abel-sur}, in such a way that  for each $U_{\beta}$ there exists $M_{\beta}\in \Pic^{g}(X)$ with the property that $V_{\beta}:=(A^g_{M_{\beta}})^{-1}(U_{\beta})\stackrel{A_{M_{\beta}}^g}{\longrightarrow} U_{\beta}$ is smooth and surjective.
Fix one such $U_{\beta}$ and consider the smooth and surjective map
\begin{equation}\label{E:Am-id}
 \Hilbs_X^{g}\times \bJbar_X^{1-g} \supseteq V_{\beta}\times \bJbar_X^{1-g}\stackrel{A_{M_{\beta}}^g\times \id}{\longrightarrow} U_{\beta}\times \bJbar_X^{1-g}\subset \bJbar_X^{1-g}\times \bJbar_X^{1-g}.
\end{equation}
Define the open subset
\begin{equation}\label{E:openWm}
W_{\beta}:=\left[\left(\Hilbr_X^{g}\cap V_{\beta}\right)\times \bJbar_X^{1-g}\right] \cup
\left[ V_{\beta} \times \bJ_X^{1-g} \right] \subseteq V_{\beta}\times \bJbar_X^{1-g}\subseteq \Hilbs_X^{g}\times \bJbar_X^{1-g}.
\end{equation}
and observe that $(A_{M_{\beta}}^g\times \id)(W_{\beta})\subseteq \Open$.

\begin{prop}\label{P:comp-QP}
Same notations as above. Assume that either ${\rm char}(k)=0$ or that ${\rm char}(k)>g$.
The restrictions of $\Q'$ and of $(A_{M_{\beta}}^g\times \id)^*\P$ to $W_{\beta}$ differ by the
 pull-back of a line bundle from $\bJbar_X^{1-g}$.
\end{prop}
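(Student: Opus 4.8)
The plan is to pull back the Poincar\'e bundle $\P$ along $A_{M_\beta}\times\id$ and compare the resulting line bundle, term by term, with the explicit expression for $\Q'$ over $W_\beta$ furnished by Remark \ref{R:Q'-lb}. I would work on $Y:=X\times V_\beta\times\bJbar_X^0$, with projections $\pi_X,\pi_V,\pi_J$ to the factors and $p_{ij}$ to the pairwise products, and let $\D_{V_\beta}\subseteq X\times V_\beta$ be the restriction of the universal divisor, with $h:\D_{V_\beta}\to V_\beta$ finite flat of degree $p_a(X)$ and $g:\D_{V_\beta}\to X$. By the universal property of Fact \ref{F:huge-Jac}\eqref{F:huge3}, the first-factor universal sheaf pulls back to $(\id_X\times A_{M_\beta})^*\I=I_{\D_{V_\beta}}\otimes\pi_X^*M_\beta\otimes\pi_V^*N$ for some $N\in\Pic(V_\beta)$, while the second-factor universal sheaf pulls back to $p_{13}^*\I$. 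Since the determinant of cohomology commutes with the base change $A_{M_\beta}\times\id$ of the trivial family, formula \eqref{E:Poin-sheaf} expresses $(A_{M_\beta}\times\id)^*\P$ as a product of three $\D_{p_{23}}$-line bundles in the sheaves $\F\otimes\pi_X^*M_\beta\otimes\pi_V^*N$ (with $\F:=p_{12}^*I_{\D_{V_\beta}}$) and $p_{13}^*\I$.

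First I would isolate the terms that are manifestly pullbacks from the second factor: because $\D_{p_{23}}$ of a sheaf pulled back from $X\times\bJbar_X^0$ (resp. from $X\times V_\beta$) is a pullback from $\bJbar_X^0$ (resp. from $V_\beta$), both $\D_{p_{23}}(p_{13}^*\I)$ and $\D_{p_{23}}(\pi_X^*M_\beta\otimes p_{13}^*\I)$ are pulled back from $\bJbar_X^0$, whereas $\D_{p_{23}}(\F\otimes\pi_X^*M_\beta\otimes\pi_V^*N)$ is pulled back from $V_\beta$. The only genuinely mixed term is $\D_{p_{23}}(\F\otimes\pi_X^*M_\beta\otimes p_{13}^*\I)$; to handle it I would tensor the universal divisor sequence $0\to I_{\D_{V_\beta}}\to\O\to\O_{\D_{V_\beta}}\to 0$ by $\pi_X^*M_\beta\otimes p_{13}^*\I$. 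This sequence stays exact precisely over $W_\beta$ --- either the divisor sits in $X_{\rm sm}$, where $I_{\D_{V_\beta}}$ is invertible, or the second factor lies in $\bJ_X^0$, where $p_{13}^*\I$ is invertible --- which is the reason for restricting to $W_\beta$. Additivity of the determinant of cohomology (\cite[Prop. 44]{est1}) then rewrites the mixed term as a pullback from $\bJbar_X^0$ times $E^{-1}$, where $E:=\D_{p_{23}}(\O_{\D_{V_\beta}\times\bJbar_X^0}\otimes\pi_X^*M_\beta\otimes p_{13}^*\I)=\det\big((h\times\id)_*(g\times\id)^*(\pi_X^*M_\beta\otimes\I)\big)$ is the determinant of a finite flat pushforward.

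The point of contact with $\Q'$ is now visible: over $W_\beta$ the sheaf $(g\times\id)^*\I$ is a line bundle, so the norm formula $\det f_*(\ell)=\det f_*\O\otimes\operatorname{Nm}_f(\ell)$ for the finite flat map $f=h\times\id$ strips the $M_\beta$-twist, giving $E=\det\big((h\times\id)_*(g\times\id)^*\I\big)\otimes\operatorname{Nm}_{h}(g^*M_\beta)$, the correction being a pullback from $V_\beta$. By Remark \ref{R:Q'-lb}, $\Q'|_{W_\beta}=\det\big((h\times\id)_*(g\times\id)^*\I\big)\otimes p_1^*(\det\A)^{-1}$, so the leading factors $\det\big((h\times\id)_*(g\times\id)^*\I\big)$ match; what remains is to check that the accumulated pullbacks from $V_\beta$ in $(A_{M_\beta}\times\id)^*\P$ collapse to exactly $p_1^*(\det\A)^{-1}$, so that they cancel against the corresponding factor of $\Q'$ in the ratio.

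This bookkeeping is the main obstacle. I would compute the remaining $V_\beta$-terms --- namely $\D_{p_{23}}(\F\otimes\pi_X^*M_\beta\otimes\pi_V^*N)$, the norm $\operatorname{Nm}_h(g^*M_\beta)$, and the $N$-contributions --- again via the divisor sequence on $X\times V_\beta$ together with the norm formula and the projection formula $\D_{p_{23}}(\G\otimes p_{23}^*M)=\D_{p_{23}}(\G)\otimes M^{\otimes\chi}$. The key numerical input is that the relative Euler characteristic is $\chi=\deg(I_D\otimes M_\beta)+1-p_a(X)=1-p_a(X)$, which makes the two packets of $N$-twists (of weights $p_a(X)-1$ and $1-p_a(X)$) cancel, while the two copies of $\operatorname{Nm}_h(g^*M_\beta)$ cancel with opposite signs; what survives is precisely $p_1^*(\det\A)^{-1}$. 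Feeding this back, the ratio $\Q'|_{W_\beta}\otimes\big((A_{M_\beta}\times\id)^*\P\big)^{-1}$ reduces to $\D_{p_{23}}(\pi_X^*M_\beta\otimes p_{13}^*\I)\otimes\D_{p_{23}}(p_{13}^*\I)^{-1}$, up to a constant, which is a pullback from $\bJbar_X^0$, as asserted. Throughout I would use that $\P$ is independent of the universal sheaf (Remark \ref{R:dep-sheaf}), so that the residual dependence of $\Q'$ on $\I$ --- a twist by a bundle pulled back from $\bJbar_X^0$ (Remark \ref{R:Q-I}) --- is harmless for the statement.
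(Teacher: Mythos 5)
Your argument is correct and reaches the stated conclusion, but it takes a genuinely different route from the paper's. Both proofs begin identically: pull back \eqref{E:Poin-sheaf} by base change and kill the ambiguity line bundle $N$ via the projection formula, using that the relative Euler characteristic is $1-p_a(X)$ so the two $N$-packets of weights $p_a(X)-1$ and $1-p_a(X)$ cancel. The divergence is in how the $M_{\beta}$-twist is removed. The paper never touches the twisted mixed term directly: it introduces the untwisted analogue $\N$ of the pulled-back Poincar\'e expression on all of $\Hilb_X^{p_a(X)}\times \bJbar_X^0$, proves via the seesaw principle that the twisted and untwisted expressions differ by a pullback from $\bJbar_X^0$ (checking equality on each fiber $\Hilb_X^{p_a(X)}\times\{I\}$ with Corollary \ref{C:compdet}), and only then restricts $\N$ to $W_{\beta}$ and expands it with the divisor sequence to match \eqref{E:restr-Q'}. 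You instead expand the twisted expression over $W_{\beta}$ directly and strip the $M_{\beta}$-twist at the level of the determinant of the finite flat pushforward via the norm formula $\det f_*(\F\otimes \ell)=\det f_*(\F)\otimes \operatorname{Nm}_f(\ell)$ (for $\F$ of rank one); the two copies of $\operatorname{Nm}_h(g^*M_{\beta})$ indeed appear with opposite signs and cancel, leaving $p_1^*(\det\A)^{-1}$ and the explicit discrepancy $\D_{\pi_{23}}(\pi_1^*M_{\beta}\otimes \pi_{13}^*\I)\otimes \D_{\pi_{23}}(\pi_{13}^*\I)^{-1}$, which is visibly a pullback from $\bJbar_X^0$ --- I verified this bookkeeping and it closes. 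The trade-off: the paper's seesaw step needs global inputs (properness, connectedness and reducedness of $\Hilb_X^{p_a(X)}$, reducedness of $\bJbar_X^0$) and produces the discrepancy only as an unidentified pullback, which suffices; your route is local on $W_{\beta}$, imports the norm formula as an extra (standard) ingredient, and has the merit of identifying the discrepancy line bundle explicitly. One small imprecision: the exactness of the twisted divisor sequence over $\Hilbr_X^{p_a(X)}\times \bJbar_X^0$ is because $\pi_{13}^*\I$ is invertible near $\pi_{12}^{-1}(\D)$ (as $\D$ lies in $X_{\rm sm}\times\cdots$ there), which is the relevant Tor-vanishing, rather than because $I_{\D}$ is invertible; the conclusion is unaffected.
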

\begin{proof}
Denote by $\pi_{ij}$ and $\pi_i$ the projections of $X\times \Hilb_X^{g}\times \bJbar_X^{1-g}$ (or of its open subsets
$X\times W_{\beta}\subseteq X\times V_{\beta}\times \bJbar_X^{1-g}$) onto the factors corresponding to the subscripts and consider the following commutative diagram
\begin{equation}\label{E:diag-P-Q}
\xymatrix{
& X\times \bJbar_X^{1-g} & & \\
\D\times \bJbar_X^{1-g} \ar@{^{(}->}[r] \ar[ur]^{f\times \id}\ar[dr]_{h\times \id}& X\times \Hilb_X^{g}\times \bJbar_X^{1-g} \ar[u]^{\pi_{13}}\ar[d]_{\pi_{23}} & X\times W_{\beta} \ar@{_{(}->}[l] \ar[d]^{\pi_{23}}
\ar[rr]^(0.4){\id\times A_{M_{\beta}}^g\times \id} & & X\times \Open \ar[d]^{p_{23}}\\
& \Hilb_X^{g}\times \bJbar_X^{1-g}  & W_{\beta} \ar@{_{(}->}[l] \ar[rr]^(0.4){A_{M_{\beta}}^g\times \id} & & \Open
}
\end{equation}

From the definition of the Abel map \eqref{E:Abel-map}, it follows that the pull-back of the universal sheaf $\I$ via the map $\id\times A_{M_{\beta}}^g:X\times V_{\beta}\to X\times U_{\beta}\subseteq X\times \bJbar_X^{1-g}$ is equal to
\begin{equation}\label{E:back-I}
(\id\times A_{M_{\beta}}^g)^*\I=\I(\D)_{|X\times V_{\beta}}\otimes p_1^*(M_{\beta})\otimes p_2^*(N),
\end{equation}
where $\I(\D)$ is the ideal sheaf of the universal divisor $\D\subset X\times \Hilb_X^{g}$, $ p_1$ and $p_2$ are the projection maps from $X\times U_{\beta}$ onto $X$ and $U_{\beta}$, respectively, and $N$ is some line bundle on $V_{\beta}$.

Applying the base change property of the determinant of cohomology (see \cite[Prop. 44(1)]{est1}) to the definition
\eqref{E:Poin-sheaf} of $\P$ and using \eqref{E:back-I}, we get that
\begin{equation}\label{E:back-P}
(A_{M_{\beta}}^g\times \id)^*\P=\D_{\pi_{23}}(\pi_{12}^*\I(\D)\otimes \pi_1^*M_{\beta}\otimes \pi_2^*N \otimes \pi_{13}^*\calI)^{-1}\otimes \D_{\pi_{23}}(\pi_{12}^*\I(\D)\otimes \pi_1^*M_{\beta}\otimes \pi_2^*N) \otimes \D_{\pi_{23}}(\pi_{13}^*\I).
\end{equation}
Applying the projection property of the determinant of cohomology (see \cite[Prop. 44(3)]{est1}) and using that
$\pi_{12}^*\I(\D)\otimes \pi_1^*M_{\beta}$ and $\pi_{13}^*\calI$ have relative  Euler characteristic equal to $1-g$, we get
\begin{equation}\label{E:indep-N}
\begin{sis}
& \D_{\pi_{23}}(\pi_{12}^*\I(\D)\otimes \pi_1^*M_{\beta}\otimes \pi_2^*N \otimes \pi_{13}^*\calI)= \D_{\pi_{23}}(\pi_{12}^*\I(\D)\otimes \pi_1^*M_{\beta} \otimes \pi_{13}^*\calI)\otimes (\pi_2^*N)^{1-g},\\
& \D_{\pi_{23}}(\pi_{12}^*\I(\D)\otimes \pi_1^*M_{\beta}\otimes \pi_2^*N)=
\D_{\pi_{23}}(\pi_{12}^*\I(\D)\otimes \pi_1^*M_{\beta})\otimes (\pi_2^*N)^{1-g}.\\
\end{sis}
\end{equation}
Substituting \eqref{E:indep-N} into \eqref{E:back-P}, we get
\begin{equation}\label{E:back-P2}
(A_{M_{\beta}}^g\times \id)^*\P=\D_{\pi_{23}}(\pi_{12}^*\I(\D)\otimes \pi_1^*M_{\beta} \otimes \pi_{13}^*\calI)^{-1}\otimes \D_{\pi_{23}}(\pi_{12}^*\I(\D)\otimes \pi_1^*M_{\beta}) \otimes \D_{\pi_{23}}(\pi_{13}^*\I).
\end{equation}

\un{CLAIM:} The two line bundles on $\Hilb_X^{g}\times \bJbar_X^{1-g}$
$$
\begin{aligned}
\M:= &\D_{\pi_{23}}(\pi_{12}^*\I(\D)\otimes \pi_1^*M_{\beta} \otimes \pi_{13}^*\calI)^{-1}\otimes \D_{\pi_{23}}(\pi_{12}^*\I(\D)\otimes \pi_1^*M_{\beta}) \otimes \D_{\pi_{23}}(\pi_{13}^*\I),\\
\N:= &\D_{\pi_{23}}(\pi_{12}^*\I(\D) \otimes \pi_{13}^*\calI)^{-1}\otimes \D_{\pi_{23}}(\pi_{12}^*\I(\D))
\otimes \D_{\pi_{23}}(\pi_{13}^*\I),
\end{aligned}
$$
differ by the pull-back of a line bundle from $\bJbar_X^{1-g}$.

Indeed, since $\Hilb_X^{g}$ is a connected and reduced projective scheme (by Fact \ref{F:Hilb-cur}) and
$\bJbar_X^{1-g}$ is reduced and locally Noetherian (by Fact \ref{F:propJbig} and Fact \ref{F:Este-Jac}), the Claim will follow from the seesaw principle (see \cite[Sec. II.5, Cor. 6]{Mum}) if we show that
\begin{equation}\label{E:eq-fiber}
\M_{|\Hilb_X^{g}\times \{I\}}= \N_{|\Hilb_X^{g}\times \{I\}} \text{ for any } I\in \bJbar_X^{1-g}.
\end{equation}
By the base change property of the determinant of cohomology, we get
\begin{equation}\label{E:rest-MN}
\begin{sis}
\M_{|\Hilb_X^{g}\times \{I\}}= &\D_{\pi_2}(\I(\D)\otimes \pi_1^*M_{\beta} \otimes \pi_{1}^*I)^{-1}\otimes
\D_{\pi_2}(\I(\D)\otimes \pi_1^*M_{\beta}),\\
\N_{|\Hilb_X^{g}\times \{I\}}= &\D_{\pi_2}(\I(\D) \otimes \pi_{1}^*I)^{-1}\otimes \D_{\pi_2}(\I(\D)),
\end{sis}
\end{equation}
where $\pi_i$ (for $i=1,2$) is the projection of $X\times \Hilb_X^{g}$ onto the $i$-th factor. Using these formulas, the equality \eqref{E:eq-fiber} follows from Corollary \ref{C:compdet}.

\vspace{0,2cm}

Consider now the exact sequence associated to the universal divisor $\D\subset X\times \Hilb_X^{g}$:
\begin{equation}\label{E:seqD}
0 \to \I(\D) \to \O_{X\times \Hilb_X^{g}}\to \O_{\D}\to 0.
\end{equation}
By pulling back \eqref{E:seqD} via $\pi_{12}:X\times W_{\beta}\to X\times \Hilb_X^{g}$,
tensoring it either with $ \pi_{13}^*\calI$  (it remains exact since, by the definition of $W_{\beta}$, $ \pi_{13}^*\calI$ is a line bundle on $\pi_{12}^{-1}(D)\cap (X\times W_{\beta})\subseteq (X_{\rm sm}\times\Hilbr_X^{g}\times\bJbar_X^{1-g})\cup(X\times\Hilb_X^{g}\times\J_X^0)$) and using the additivity property of the determinant of cohomology, we get
\begin{equation}\label{E:add-D}
\begin{sis}
& \D_{\pi_{23}}(\pi_{12}^*\I(\D) \otimes \pi_{13}^*\calI)^{-1}=
\D_{\pi_{23}}( \pi_{13}^*\calI)^{-1}\otimes
\D_{\pi_{23}}\left(( \pi_{13}^*\calI)_{|\pi_{12}^{-1}(\D)}\right),\\
&  \D_{\pi_{23}}(\pi_{12}^*\I(\D))=\D_{\pi_{23}}(\O_{X\times W_{\beta}}) \otimes \D_{\pi_{23}}\left(\O_{\pi_{12}^{-1}(\D)}\right)^{-1}.\\
\end{sis}
\end{equation}
By the base change property of the determinant of cohomology, we get that
\begin{equation}\label{E:null}
\D_{\pi_{23}}(\O_{X\times W_{\beta}})=\O_{W_{\beta}}.
\end{equation}
By the definition of $W_{\beta}$, the restriction of the sheaf $ \pi_{13}^*\calI$ over the relative divisor $\pi_{12}^{-1}(\D)\subset X\times W_{\beta}\to W_{\beta}$ is a line bundle. Therefore
$\pi_{23*}(\pi_{13}^*\calI_{|\pi_{12}^{-1}(\D)})$ and $\pi_{23*}(\O_{\pi_{12}^{-1}(\D)})$
are locally free sheaves of rank $g$ over $W_{\beta}$.
From the definition of the determinant of cohomology (see \S\ref{S:Poinc1}) and the commutative diagram \eqref{E:diag-P-Q}, it follows that over $W_{\beta}$ we have the equality
\begin{equation}\label{E:semp3}
\begin{sis}
\D_{\pi_{23}}\left( \pi_{13}^*\calI_{|\pi_{12}^{-1}(\D)}\right)=&
\det\left[\pi_{23*}\left(\pi_{13}^*\calI_{|\pi_{12}^{-1}(\D)}\right)\right]
 =\det\left[(h\times \id)_*(f\times \id)^*\I\right],   \\
 \D_{\pi_{23}}\left(\O_{\pi_{12}^{-1}(\D)}\right)=&
\det\left[\pi_{23*}\left(\O_{\pi_{12}^{-1}(\D)}\right)\right]=
 \det\left[(h\times \id)_*\O_{\D\times \bJbar_X^{1-g}}\right].   \\
\end{sis}
\end{equation}
Observe that since $W_{\beta}$ is  contained in the open subset $\Hilb_X^{g}\times \bJ_X^{1-g}\cup \Hilbr_X^{g}\times \bJbar_X^{1-g}\subseteq \Hilb_X^{g}\times \bJbar_X^{1-g}$, then the restriction of
$\Q'$ to $W_{\beta}$ is given by the expression \eqref{E:restr-Q'}. Therefore, using \eqref{E:add-D}, \eqref{E:null} and \eqref{E:semp3}, we get
\begin{equation}\label{E:eq-QN}
\Q'_{|W_{\beta}}=\Q'^{g}_{|W_{\beta}}=\N_{|W_{\beta}}.
\end{equation}
We now conclude the proof by combining \eqref{E:eq-QN}, \eqref{E:back-P2} and the above Claim.

\end{proof}

\begin{remark}
Note that we cannot hope to have equality in Proposition \ref{P:comp-QP} since $\Q'$ is only well-defined up to the pull-back of a line bundle from $\bJbar_X^{1-g}$ (see Remark \ref{R:Q-I}) while $\P$ is well-defined (see Remark \ref{R:dep-sheaf}).
\end{remark}

\vspace{0,2cm}

\subsubsection{{\bf Descending $\Q$ to $\ov\P$}}

We are now ready to prove Theorem \ref{T:Poinc}. In order to do that, we need the following result which will allow us to descend the sheaf $\Q$ of \S\ref{S:sheafQ}
to our desired  Poincar\'e sheaf $\ov\P$.


\begin{lemma}\label{L:descent}
Let $f:Y\rightarrow Z$ be a faithfully flat morphism of finite type  between locally Noetherian schemes.
Let $\F$ be a (maximal) Cohen-Macaulay coherent sheaf on $Y$ and $\G$ be a Cohen-Macaulay coherent sheaf on an open subset
$j:V\hookrightarrow Z$.
Assume that there exists an open subset $i:U\hookrightarrow Y$ such that:
\begin{enumerate}[(i)]
\item The complement of $U$ inside $Y$ has codimension at least two.
\item $f(U)\subseteq V$.
\item $(f_{|U})^*(\G_{|f(U)})=\F_{|U}\otimes f^*(N)_{|U}$ for some line bundle $N$ on $Z$.
\end{enumerate}
Then there exists a unique coherent sheaf $\wt\G$ on $Z$ such that
\begin{enumerate}[(a)]
\item $\wt\G$ is a (maximal) Cohen-Macaulay sheaf.
\item $\wt\G_{|V}=\G$.
\item $f^*(\wt\G)=\F\otimes f^*(N)$.
\end{enumerate}
\end{lemma}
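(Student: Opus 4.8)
The plan is to realise $\wt\G$ by faithfully flat descent of the twisted sheaf $\F':=\F\otimes f^*(N)$, using the codimension hypothesis (i) to produce and to control the descent datum. First I would record two standing facts. Since $\F$ is (maximal) Cohen--Macaulay and $N$ is a line bundle, $\F'$ is again (maximal) Cohen--Macaulay, hence satisfies Serre's condition $S_2$; the same then holds for its pullbacks along the two projections $p_1,p_2\colon Y\times_Z Y\to Y$ (which are base changes of $f$), as these are flat with Cohen--Macaulay fibres --- automatic in our applications, where $f$ is even smooth. Secondly, because $f$ is faithfully flat, the local depth formula for flat local homomorphisms shows that any coherent sheaf $\mathcal H$ on $Z$ with $f^*\mathcal H$ (maximal) Cohen--Macaulay is itself (maximal) Cohen--Macaulay; this will give property (a) for free once $\wt\G$ is constructed with $f^*\wt\G\cong\F'$, and it simultaneously yields property (c).

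Next I would build the descent datum. Write $V':=f(U)\subseteq V$, which is open (as $f$ is flat, hence open) and dense (as $U$ is dense and $f$ is open and surjective). On $\Omega:=U\times_Z U=p_1^{-1}(U)\cap p_2^{-1}(U)$ one has $f\circ p_1=f\circ p_2=:q$ with $q(\Omega)\subseteq V'$, and hypothesis (iii) identifies $p_1^*\F'|_\Omega=q^*(\G|_{V'})=p_2^*\F'|_\Omega$; the resulting tautological isomorphism $\phi_0\colon p_1^*\F'|_\Omega\iso p_2^*\F'|_\Omega$ automatically satisfies the cocycle condition on the triple fibre product. The key step is then to extend $\phi_0$ to an isomorphism $\Phi\colon p_1^*\F'\iso p_2^*\F'$ on all of $Y\times_Z Y$: since the $p_i$ are flat, the complement of $\Omega$ equals $p_1^{-1}(Y\setminus U)\cup p_2^{-1}(Y\setminus U)$ and hence has codimension $\geq 2$ by (i), while $p_1^*\F'$ and $p_2^*\F'$ are $S_2$. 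Therefore both $\phi_0$ and $\phi_0^{-1}$, regarded as sections of the appropriate $\SHom$ sheaves, extend uniquely across this codimension-$2$ locus (the Hartogs-type extension already used in Remark \ref{R:CM-ext}, see \cite[Thm. 5.10.5]{EGAIV2}), and by that same uniqueness the extension is an isomorphism and the cocycle identity persists. Faithfully flat descent then produces a unique coherent $\wt\G$ on $Z$ with an isomorphism $f^*\wt\G\cong\F'$ inducing $\Phi$; this is (c), and $\wt\G$ is (maximal) Cohen--Macaulay by the depth formula above, giving (a).

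It remains to verify (b) and uniqueness. For (b) I would restrict to the saturated open $Y_V:=f^{-1}(V)\supseteq U$, whose complement still has codimension $\geq 2$. Over $V$ one has $f^*\wt\G|_{Y_V}=\F'|_{Y_V}$ with descent datum $\Phi|_{Y_V\times_Z Y_V}$, and on $U$ this datum coincides with the canonical descent datum of $(f|_{Y_V})^*(\G|_V)$. Since both $\F'|_{Y_V}$ and $(f|_{Y_V})^*(\G|_V)$ are $S_2$ and agree on the codimension-$2$ complement $U$, the isomorphism of (iii) extends to an isomorphism over $Y_V$ compatible with descent data, so descending gives $\wt\G|_V\cong\G$. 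Uniqueness follows the same pattern: any $\wt\G''$ fulfilling (a)--(c) yields, through (c), a descent datum on $\F'$ which by (b) agrees with $\Phi$ over $Y_V$, hence on $\Omega$, hence --- by the codimension-$2$ extension --- everywhere; faithful flatness then forces $\wt\G''\cong\wt\G$.

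I expect the main obstacle to be precisely the extension of the descent datum $\phi_0$ across the codimension-$2$ locus of $Y\times_Z Y$ together with the survival of the cocycle condition: this is exactly the point where one needs the flat pullbacks $p_i^*\F'$ of the Cohen--Macaulay sheaf $\F'$ to be $S_2$ (so that the Hartogs extension applies) and where flatness of the projections is used to keep the relevant complement of codimension $\geq 2$. The remaining ingredients --- flat descent, the depth formula, and the openness and density of $f(U)$ --- are standard.
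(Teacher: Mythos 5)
Your argument is correct and follows essentially the same route as the paper's: the paper likewise takes the tautological descent datum on $U\times_{f(U)}U$ supplied by (iii), extends it to $Y\times_Z Y$ across the codimension-two complement by writing $p_1^*(\F)=(i\times i)_*(q_1^*(\F_{|U}))$ (the same Hartogs-type extension of $S_2$ sheaves you invoke, with the cocycle condition persisting by uniqueness of the extension), and then applies fpqc descent, obtaining (a) by faithful descent of the Cohen--Macaulay property and (b) from the codimension-two agreement over $V$. The only cosmetic difference is in the uniqueness step, which the paper settles directly by noting that the complement of $V$ in $Z$ has codimension at least two, so that $\wt\G=j_*\G$.
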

\begin{proof}
First of all, observe that $f$ is both quasi-compact (hence a fpqc morphism) by \cite[Sec. 1.5]{EGAIV1} and of finite presentation (hence a fppf morphism) by \cite[Sec. 1.6]{EGAIV1}. Moreover, by replacing $\G$ with $\G\otimes N^{-1}$, we can assume that $N=\O_Z$.

Let us first prove the uniqueness of $\wt\G$. From hypothesis (i) and (ii) and the fact that $f$ is open with equidimensional fibers (being fppf, see \cite[Thm. 2.4.6]{EGAIV2} and
\cite[Cor. 14.2.2]{EGAIV3}),  we get that the complement of $V$ inside $Z$ has codimension at least two.
Since $\wt\G$ is Cohen-Macaulay by (a) and $\wt\G_{|V}=\G$ by (b), we get that $\wt\G=j_*\G$ (by \cite[Thm. 5.10.5]{EGAIV2}); hence $\wt\G$ is unique.

Let us now prove the existence of $\wt{G}$.
Using fpqc descent for quasi-coherent sheaves (see \cite[Thm. 4.23]{FGA}), the existence of a quasi-coherent sheaf $\wt\G$ satisfying (c) will follow if we find a descent data for $\F$ relative to the
fpqc morphism $f:Y\to Z$,  i.e. an isomorphism  $\phi:p_1^*(\F)\stackrel{\cong}{\longrightarrow} p_2^*(\F)$  satisfying the cocycle condition $p_{13}^*(\phi)=p_{12}^*(\phi)\circ p_{23}^*(\phi)$, where
$p_i:Y\times_Z Y\to Y$ and $p_{ij}:Y\times_Z Y\times_Z Y\to Y\times_Z Y$ denote the projection onto the $i$-th and $ij$-th factors, respectively.
Because of (iii), a descent data exists for $\F_{|U}$ relative to the fpqc morphism $f:U\to f(U)$, i.e. there exists  an isomorphism  $\psi:q_1^*(\F_{|U})\stackrel{\cong}{\longrightarrow} q_2^*(\F_{|U})$
such that $q_{13}^*(\psi)=q_{12}^*(\psi)\circ q_{23}^*(\psi)$, where $q_i:U\times_{f(U)} U\to U$ and $q_{ij}:U\times_{f(U)} U\times_{f(U)} U\to U\times_{f(U)} U$ denote the projection onto the $i$-th
and $ij$-th factors, respectively.  Observe now that, since  $\F$ is Cohen-Macaulay and the complement of $U$ inside $Y$ has codimension at least two by (i), it holds that
$\F= i_*(\F_{|U})$ by \cite[Thm. 5.10.5]{EGAIV2}. By taking the pushforward of $\psi$
with respect to the open embedding $i\times i:U\times_{f(U)} U\hookrightarrow Y\times_Z Y$, we obtain an isomorphism
$$p_1^*(\F)=p_1^*(i_*(\F_{|U}))=(i\times i)_*(q_1^*(\F_{|U})) \xrightarrow{\phi:=(i\times i)_*(\psi)} (i\times i)_*(q_2^*(\F_{|U}))=p_2^*(i_*(\F_{|U}))=p_2^*(\F),$$
which clearly satisfies the cocycle condition, since $\psi$ does. Therefore, by fpqc descent, we obtain a quasi-coherent sheaf $\wt\G$ on $Z$ which satisfies (c).
Observe now that $\wt\G$ is of finite type by faithful descent (see \cite[Prop. 2.5.2]{EGAIV2}), hence coherent because
$Y$ is locally Noetherian (see \cite[Sec. 6.1]{EGAI}). Moreover, $\wt\G$ is (maximal) Cohen-Macaulay by faithful descent (see \cite[Prop. 6.4.1]{EGAIV2}), hence (a) is satisfied.
Finally,  since the descent data for $\F$ that were used above to construct $\wt\G$ are induced by the descent data for $\F_{|U}=f^*(\G)_{|U}$, it follows that $\wt\G_{|f(U)}=\G_{|f(U)}$.
Therefore, the two Cohen-Macaulay sheaves $\wt\G_{|V}$ and $\G$ on $V$ have the same restriction to the open subset $f(U)\subseteq V$ whose complement has codimension
at least two by what observed above; hence $\wt\G_{|V}=\G$  by \cite[Thm. 5.10.5]{EGAIV2} and (b) is satisfied, q.e.d.

\end{proof}

\begin{proof}[Proof of Theorem \ref{T:Poinc}]
Consider an open cover $\bJbar_X^{1-g}=\bigcup_{\beta} U_{\beta}$ as in Fact \ref{F:Abel-sur}, in such a way that  for each $U_{\beta}$ there exists $M_{\beta}\in \Pic^{g}(X)$ with the property that $V_{\beta}:=(A_{M_{\beta}}^g)^{-1}(U_{\beta})\stackrel{A_{M_{\beta}}^g}{\longrightarrow} U_{\beta}$ is smooth and surjective.

We want to apply the descent Lemma \ref{L:descent} to the  smooth and surjective (hence faithfully flat of finite type) morphism $A_{M_{\beta}}^g\times \id: V_{\beta}\times \bJbar_X^{1-g}\to U_{\beta}\times \bJbar_X^{1-g}$
with respect to the sheaf $\Q$ on $V_{\beta}\times \bJbar_X^{1-g}$
(which is a maximal Cohen-Macaulay sheaf by Fact \ref{F:prop-Q}\eqref{F:prop-Q2}) and to the line bundle $\P$ defined on the open subset
$\left(U_{\beta}\times \bJbar_X^{1-g}\right)\cap \Open \subseteq U_{\beta}\times \bJbar_X^{1-g}$.
Let us check that the hypothesis of Lemma \ref{L:descent} are satisfied if we choose the open subset
\begin{equation*}\label{E:Wm'}
W'_{\beta}:= W_{\beta}\cap \Hilbc^{g}_X\times \bJbar_X^{1-g}= \left[\left(\Hilbr_X^{g}\cap V_{\beta}\right)\times \bJbar_X^{1-g}\right] \cup
\left[\left(\Hilbc_X^{g}\cap V_{\beta}\right)\times \bJ_X^{1-g}\right] \subseteq V_{\beta}\times \bJbar_X^{1-g},
\end{equation*}
where $W_{\beta}$ is defined in \eqref{E:openWm}.  We have already observed in \S\ref{S:Q'-P} that
$$(A_{M_{\beta}}^g\times \id)(W'_{\beta})\subseteq(A_{M_{\beta}}^g\times \id)(W_{\beta})\subseteq \Open $$
which gives the hypothesis (ii) of the Lemma. The hypothesis (iii) follows from Fact \ref{F:Q-Q'} and
Proposition \ref{P:comp-QP}. In order to prove the hypothesis (i), observe that the complement of $W'_{\beta}$ inside $V_{\beta}\times \bJbar_X^{1-g}$ is given by the closed subset
$$\left[\left(V_{\beta}\cap \left( \Hilb^{g}_X\setminus \Hilbc^{g}_X\right)\right)\times \bJbar_X^{1-g}\right] \cup
\left[\left(V_{\beta}\cap\left(\Hilb^{g}_X\setminus \Hilbr^{g}_X\right)\right)\times \left(\bJbar_X^{1-g}\setminus \bJ_X^{1-g}\right)\right].
$$
This closed subset has codimension at least two since: $\Hilb^{g}_X\setminus \Hilbc^{g}_X$ has codimension at least two by Lemma \ref{L:cod-curv2};
$\Hilb^{g}_X\setminus \Hilbr^{g}_X$ has codimension at least one by Fact \ref{F:Hilb-cur}\eqref{F:Hilb-cur2}; $\bJbar_X^{1-g}\setminus \bJ_X^{1-g}$ has codimension at least one
by Fact \ref{F:propJbig}\eqref{F:propJbig2}. The hypothesis (i) of Lemma \ref{L:descent} is therefore satisfied.

Therefore, we can now apply Lemma \ref{L:descent} in order to obtain a unique maximal Cohen-Macauly sheaf $\wt \P_{\beta}$ on $U_{\beta}\times \bJbar_X^{1-g}$ that agrees with $\P$ on the open subset $\left(U_{\beta}\times \bJbar_X^{1-g}\right)\cap \Open$ and whose pull-back via $A_{M_{\beta}}^g\times \id$ agrees with $\Q$, up to the pull-back of a line bundle on $\bJbar_X^{1-g}$.
 Because of the uniqueness of $\wt\P_{\beta}$ and the fact that $\P$ is defined on the whole $\Open$, the sheaves $\wt\P_{\beta}$ glue together to give a maximal Cohen-Macaulay sheaf $\wt\P$ on $\bJbar_X^{1-g}\times \bJbar_X^{1-g}$ that agrees with $\P$ on the open subset
$\Open \subseteq \bJbar_X^{1-g}\times \bJbar_X^{1-g}$. We have already observed in Remark \ref{R:CM-ext} that this is enough to ensure that $\wt \P=\ov\P:=j_{*}(\P)$. Part \eqref{T:Poinc1} of the theorem now follows.

Part \eqref{T:Poinc2}: it is clearly enough to prove the desired properties for the sheaves $\wt\P_{\beta}$ on
$U_{\beta}\times \bJbar_X^{1-g}$. By construction (see Proposition \ref{P:comp-QP} and Lemma \ref{L:descent}(c)),
we have that
$$(A_{M_{\beta}}^g\times \id)^*(\wt\P_{\beta})=\Q_{|V_{\beta}\times \bJbar_X^{1-g}}\otimes \pi_2^*N,$$
for some line bundle $N$ on $\bJbar_X^{1-g}$.
Now the flatness of $\wt\P_{\beta}$ with respect to the second projection follows from the analogous property of $\Q$ (see  Fact \ref{F:prop-Q}\eqref{F:prop-Q3}).
For any fixed $I\in \bJbar_X^{1-g}$, the fact that $(\wt\P_{\beta})_{|U_{\beta}\times \{I\}}$ is maximal Cohen-Macaulay follows from the fact that $(\Q\otimes \pi_2^*N)_{|V_{\beta}\times\{I\}}=\Q_{|V_{\beta}\times\{I\}}$ is maximal Cohen-Macaulay (see Fact \ref{F:prop-Q}\eqref{F:prop-Q4}) using faithful descent with respect to the smooth and surjective morphism
$A_{M_{\beta}}^g:V_{\beta}\to U_{\beta}$ (see \cite[Prop. 6.4.1]{EGAIV2}).

\end{proof}

\section{Properties of the Poincar\'e sheaf}\label{S:Poinc3}

Throughout this section, we assume that $X$ is a connected reduced curve with planar singularities of arithmetic genus $g:=p_a(X)$ and that either ${\rm char}(k)=0$ or  ${\rm char}(k)>g$.
The aim of this section is to prove several properties of the Poincar\'e sheaf $\ov\P$ on $\bJbar_X^{1-g}\times \bJbar_X^{1-g}$ constructed in Section \ref{S:Poinc}.

First of all, $\ov\P$ is symmetric with respect to the two factors.

\begin{prop}\label{P:symm}
The Poincar\'e sheaf is equivariant under the permutation $\sigma: \bJbar_X^{1-g}\times \bJbar_X^{1-g}\to \bJbar_X^{1-g}\times \bJbar_X^{1-g}$ of the two factors.

In particular, for any $I\in \bJbar_X^{1-g}$ we have that
$$\ov\P_I:=\ov\P_{|\bJbar_X^{1-g}\times \{I\}}=\ov\P_{|\{I\}\times \bJbar_X^{1-g}}.$$
\end{prop}
\begin{proof}
From the definition \eqref{E:Poin-sheaf} it is clear that
the Poincar\'e line bundle  $\P$ on $\Open$  is equivariant under the permutation of the two
factors. The same result for $\ov\P$ follows now from Remark \ref{R:CM-ext}.
\end{proof}

We now study the behavior of $\ov\P$ under the Serre dualizing functor. Recall that, for any Cohen-Macaulay scheme $Z$ with dualizing sheaf $\omega_Z$,  the Serre dualizing functor is defined as
$$\begin{aligned}
\bbD_{Z}: \Dc^b(Z)& \to \Dc^b(Z),\\
\cK^{\bullet} & \mapsto (\cK^{\bullet})^{D}:= \RHom(\cK^{\bullet},\omega_{Z}).
\end{aligned}$$
We will need the following well-known facts.

\begin{fact}\label{F:dual-func}
Let $Z$ be a Cohen-Macaulay scheme with dualizing functor $\bbD_Z$. Then:
\begin{enumerate}[(i)]
\item \label{F:dual-func1}
$\bbD_Z$ is an involution, or in other words $((\cK^{\bullet})^{D})^{D}=\cK^{\bullet}$ for any $\cK^{\bullet}\in \Dc^b(Z)$.
\item \label{F:dual-func2}
A coherent sheaf $\F$ on $Z$ is maximal Cohen-Macaulay if and only if $\F^{D}$ is concentrated in degree zero, i.e. if
$\F^{D}=\SHom(\F,\omega_Z)$.
\end{enumerate}
\end{fact}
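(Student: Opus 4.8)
The plan is to deduce both parts from Grothendieck-Serre duality theory: part \eqref{F:dual-func1} from the biduality property of a dualizing complex, and part \eqref{F:dual-func2} from Grothendieck local duality. The only delicate point is keeping track of the shift relating the dualizing \emph{sheaf} $\omega_Z$, used to define $\bbD_Z$, to the dualizing complex $\omega_Z^{\scriptscriptstyle\bullet}$. Since a Cohen-Macaulay scheme carries a dualizing sheaf (rather than merely a dualizing complex) only when it is equidimensional --- which is anyway the only case occurring in our applications --- I may assume that $Z$ has pure dimension $n$, so that $\omega_Z^{\scriptscriptstyle\bullet}$ is quasi-isomorphic to $\omega_Z[n]$ and the honest duality functor $\mathbb E(-):=\RHom(-,\omega_Z^{\scriptscriptstyle\bullet})$ is related to $\bbD_Z$ by $\mathbb E(\cK^{\bullet})=\bbD_Z(\cK^{\bullet})[n]$.

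For part \eqref{F:dual-func1} I would invoke the biduality theorem, one of the defining properties of a dualizing complex on a Noetherian scheme (Hartshorne, \emph{Residues and Duality}, Ch.~V): the natural evaluation morphism $\cK^{\bullet}\to\mathbb E(\mathbb E(\cK^{\bullet}))$ is an isomorphism for every $\cK^{\bullet}\in\Dc^b(Z)$. It then suffices to check that the two shifts cancel. Using $\bbD_Z(\cK^{\bullet}[n])=\bbD_Z(\cK^{\bullet})[-n]$ one computes
$$\mathbb E(\mathbb E(\cK^{\bullet}))=\bbD_Z\big(\bbD_Z(\cK^{\bullet})[n]\big)[n]=\bbD_Z\big(\bbD_Z(\cK^{\bullet})\big)[-n][n]=\bbD_Z(\bbD_Z(\cK^{\bullet})),$$
whence $\bbD_Z\circ\bbD_Z=\mathbb E\circ\mathbb E=\Id$ on $\Dc^b(Z)$, which is exactly the asserted involutivity.

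For part \eqref{F:dual-func2} the assertion is local and I would reduce it to the stalks. As $\F$ and $\omega_Z$ sit in degree zero, $\F^{\vee}=\RHom(\F,\omega_Z)$ has no cohomology in negative degrees, so it is concentrated in degree zero exactly when $\EExt^i(\F,\omega_Z)=0$ for all $i>0$, equivalently when $\Ext^i_{\O_{Z,z}}(\F_z,\omega_{Z,z})=0$ for all $i>0$ and all $z\in Z$. Over a Cohen-Macaulay local ring $R:=\O_{Z,z}$ of dimension $d$ with canonical module $\omega_R:=\omega_{Z,z}$, Grothendieck local duality identifies $\Ext^{d-i}_R(\F_z,\omega_R)$ with the Matlis dual of the local cohomology module $H^i_{\m}(\F_z)$; since Matlis duality detects vanishing, $\Ext^i_R(\F_z,\omega_R)=0$ for all $i>0$ if and only if $H^i_{\m}(\F_z)=0$ for all $i<d$, i.e. $\depth\F_z=d=\dim R$, i.e. $\F_z$ is a maximal Cohen-Macaulay $R$-module (the case $\F_z=0$ being trivial). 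Running this equivalence over all $z$ proves the stated characterization and, when it holds, exhibits $\F^{\vee}=\EExt^0(\F,\omega_Z)=\SHom(\F,\omega_Z)$; the module-theoretic input can be found in Bruns-Herzog, \emph{Cohen-Macaulay Rings}, \S3.3.

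Both steps are routine once this framework is in place, so there is no deep obstacle; the one point I would treat with real care is the normalization of the dualizing complex and the precise value of the shift, which enters the $\Ext$-indexing of local duality in part \eqref{F:dual-func2}. In part \eqref{F:dual-func1}, by contrast, the shifts cancel for any value, so involutivity is insensitive to the convention.
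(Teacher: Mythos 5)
Your argument is correct and is exactly the content behind the two references the paper itself cites as its proof: biduality for the dualizing complex (Hartshorne, \emph{Residues and Duality}, Ch.~V, Prop.~2.1) for part (i), and the local-duality characterization of maximal Cohen--Macaulay modules (Bruns--Herzog, Cor.~3.5.11) for part (ii). Your extra care with the shift normalization and the equidimensionality needed to pass from $\omega_Z^{\scriptscriptstyle\bullet}$ to $\omega_Z$ is well placed and consistent with how the statement is used in the paper.
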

\begin{proof}
For part \eqref{F:dual-func1} see \cite[Chap. V, Prop. 2.1]{Har2}.
For part \eqref{F:dual-func2} see \cite[Cor. 3.5.11]{BH}.
\end{proof}

Note that $\bJbar_X^{1-g}$ is a Gorenstein (and in particular Cohen-Macaulay)  scheme by Fact \ref{F:propJbig}\eqref{F:propJbig1}. 
Therefore, the same is true for $\bJbar_X^{1-g}\times \bJbar_X^{1-g}$.

\begin{prop}\label{P:dual}
The Serre dual complex $\ov\P^{D}$  and the dual sheaf $\ov P^{\vee}:=\SHom(\ov\P, \cO_{\bJbar_X^{1-g}\times \bJbar_X^{1-g}})$  of the Poincar\'e sheaf $\ov\P$ satisfy the following properties:
\begin{enumerate}[(i)]
\item \label{P:dual1} $\ov\P^{D}$ is concentrated in degree $0$ and it is equal to 
\begin{equation}\label{E:forduals}
\ov\P^{D}=\ov \P^{\vee}\otimes \omega_{\bJbar_X^{1-g}\times \bJbar_X^{1-g}}.
\end{equation}
\item \label{P:dual2} $\ov\P^{D}$ (and hence also $\ov \P^{\vee}$) is a maximal Cohen-Macaulay sheaf on $\bJbar_X^{1-g}\times \bJbar_X^{1-g}$.
\item \label{P:dual3}  $\ov \P^{\vee}$ and $\ov \P^D$ are equivariant with respect to the permutation $\sigma: \bJbar_X^{1-g}\times \bJbar_X^{1-g}\to \bJbar_X^{1-g}\times \bJbar_X^{1-g}$ of the two factors.
In particular,
$$ (\ov\P^{\vee})_{|\bJbar_X^{1-g}\times \{I\}}= (\ov\P^{\vee})_{|\{I\}\times \bJbar_X^{1-g}}:=(\ov \P^{\vee})_{I},$$
for every $I\in \bJbar_X^{1-g}$, and similarly for $(\ov \P^D)_I$. 
\item \label{P:dual4}  $\ov \P^D$ (and hence also $\ov \P^{\vee}$)  is flat with respect to the two projections $p_1,p_2:\bJbar_X^{1-g}\times \bJbar_X^{1-g}\to \bJbar_X^{1-g}$ and, for every $I\in \bJbar_X^{1-g}$, the restriction $(\ov \P^{D})_{I}=(\ov \P^{\vee})_I\otimes \omega_{\bJbar_X^{1-g}}$  is a maximal Cohen-Macaulay sheaf on $\bJbar_X^{1-g}$. Moreover,  it holds that
$$(\ov \P^{\vee})_{I}=(\ov \P_{I})^{\vee} \: \text{ and } \: (\ov \P^{D})_{I}=(\ov \P_{I})^{D}.$$
\end{enumerate}
\end{prop}
\begin{proof}
Part \eqref{P:dual1}: the fact that $\ov\P^{D}$ is concentrated in degree $0$ follows from the fact  that $\ov\P$ is a maximal Cohen-Macaulay sheaf on $\bJbar_X^{1-g}\times \bJbar_X^{1-g}$ (by Theorem \ref{T:Poinc}\eqref{T:Poinc1}) together with Fact \ref{F:dual-func}\eqref{F:dual-func2}. Formula \eqref{E:forduals} follows from the previous fact together with the  fact  that the dualizing sheaf of $\bJbar_X^{1-g}\times \bJbar_X^{1-g}$ is a line bundle (because $\bJbar_X^{1-g}\times \bJbar_X^{1-g}$ is a Gorenstein scheme).

Part \eqref{P:dual2} follows by combining Facts \ref{F:dual-func}\eqref{F:dual-func1} and \ref{F:dual-func}\eqref{F:dual-func2}.

Part \eqref{P:dual3} follows from the corresponding statement for $\ov\P$, see Proposition \ref{P:symm}.

Part \eqref{P:dual4}: combining Theorem \ref{T:Poinc}\eqref{T:Poinc2}  with \cite[Lemma 2.1(2)]{arin2} (which can be applied since $\bJbar_X^{1-g}$ is Gorenstein), we deduce that 
$\ov \P^{D}$ (and hence also $\ov \P^{\vee}$ by part \eqref{P:dual1}) is flat with respect to the two projections $p_1, p_2$  and that $(\ov \P^{D})_{I}=(\ov \P_{I})^{D}$ for every $I\in \bJbar_X^{1-g}$.  Moreover, since 
$\ov \P_{I}$ is maximal Cohen-Macaulay by Theorem \ref{T:Poinc}\eqref{T:Poinc2}, Fact \ref{F:dual-func} implies that also $(\ov \P^{D})_{I}$ is maximal Cohen-Macaulay. We conclude using that 
$(\ov \P^{D})_{I}=(\ov \P^{\vee})_I\otimes \omega_{\bJbar_X^{1-g}}$ by formula \eqref{E:forduals} and the analogous formula $(\ov \P_I)^{D}=(\ov \P_I)^{\vee}\otimes \omega_{\bJbar_X^{1-g}}$.

\end{proof}

Let us now study the behavior of the Poincar\'e sheaf $\ov\P$ under the natural multiplication map
\begin{equation}\label{E:multmap}
\begin{aligned}
\mu: \bJ_X^{1-g}\times \bJbar_X^{1-g}  & \longrightarrow \bJbar_X^{1-g}, \\
(L,I) & \mapsto L\otimes I.
\end{aligned}
\end{equation}

\begin{prop}\label{P:mult}
Consider the following diagram
$$\xymatrix{
\bJbar_X^{1-g}\times \bJbar_X^{1-g} & \bJ_X^{1-g}\times \bJbar_X^{1-g}\times \bJbar_X^{1-g} \ar[l]_{\pi_{23}} \ar[r]^{\pi_{13}}
\ar[d]^{\mu\times {\rm id}_{\bJbar_X^{1-g}}}& \bJ_X^{1-g}\times \bJbar_X^{1-g}, \\
& \bJbar_X^{1-g}\times \bJbar_X^{1-g} &
}
$$
The Poincar\'e sheaf satisfies the following property
$$(\mu\times {\id}_{\bJbar_X^{1-g}})^*(\ov\P)=\pi_{13}^*(\P)\otimes \pi_{23}^*(\ov\P).$$
In particular, for any $(L,I)\in \bJ_X^{1-g}\times \bJbar_X^{1-g}$, it holds  that $\ov\P_{L\otimes I}=\P_L\otimes \ov\P_I$.
\end{prop}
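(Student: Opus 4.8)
The plan is to deduce the global identity on $Y:=\bJ_X^0\times\bJbar_X^0\times\bJbar_X^0$ by the Cohen--Macaulay extension mechanism already used for $\ov\P$ in Remark \ref{R:CM-ext}. I would first check that both sheaves in question are maximal Cohen--Macaulay on $Y$. The map $\mu\times\id_{\bJbar_X^0}$ factors as the automorphism $(L,I,J)\mapsto(L,L\otimes I,J)$ followed by the projection forgetting the first factor, hence it is smooth and faithfully flat; therefore $(\mu\times\id)^*\ov\P$ is maximal Cohen--Macaulay by Theorem \ref{T:Poinc}\eqref{T:Poinc1}. Likewise $\pi_{23}$ is smooth, so $\pi_{23}^*\ov\P$ is maximal Cohen--Macaulay, and tensoring it with the line bundle $\pi_{13}^*\P$ preserves this property. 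Next I would record that, since $L$ is a line bundle, $L\otimes I$ lies in $\bJ_X^0$ if and only if $I$ does, so that $U:=\pi_{23}^{-1}(\Open)=(\mu\times\id)^{-1}(\Open)$; its complement in $Y$ equals $\bJ_X^0\times(\bJbar_X^0\setminus\bJ_X^0)\times(\bJbar_X^0\setminus\bJ_X^0)$, which has codimension at least two by Fact \ref{F:propJbig}\eqref{F:propJbig2}. By the uniqueness of the Cohen--Macaulay extension across this locus (\cite[Thm. 5.10.5]{EGAIV2}), it then suffices to prove the equality after restriction to $U$.

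On $U$ both sides are line bundles, being pullbacks of $\P=\ov\P_{|\Open}$, so the task reduces to the bilinearity isomorphism $(\mu\times\id)^*\P\cong\pi_{13}^*\P\otimes\pi_{23}^*\P$ on $U$. I would expand the three pullbacks using the defining formula \eqref{E:Poin-sheaf} and the base-change property of the determinant of cohomology (\cite[Prop. 44(1)]{est1}). Writing $\L$ for the universal line bundle $\I_{|X\times\bJ_X^0}$, the universal property of Fact \ref{F:huge-Jac}\eqref{F:huge3} gives $(\id_X\times\mu)^*\I=p^*\L\otimes p^*\I\otimes(\text{pullback of a line bundle }N)$; substituting this into the formula for $(\mu\times\id)^*\P$, the two surviving factors involving $N$ carry exponents $\pm(1-p_a(X))$ equal to the common relative Euler characteristic of the degree-zero sheaves appearing, so by the projection property (\cite[Prop. 44(3)]{est1}) the $N$-contributions cancel. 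This is exactly the degree-zero independence of $\P$ recorded in Remark \ref{R:dep-sheaf}, and it leaves the identity in the clean form $\langle L\otimes I,J\rangle=\langle L,J\rangle\otimes\langle I,J\rangle$ for the pairing $\langle A,B\rangle:=\D(A\otimes B)^{-1}\otimes\D(A)\otimes\D(B)$.

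To establish this multiplicativity I would cover $U$ by the opens $U_J$ (where $J$ is a line bundle) and $U_I$ (where $I$ is a line bundle), and treat $U_J$; the case of $U_I$ is symmetric via Proposition \ref{P:symm}. Covering the $I$-slot $\bJbar_X^0$ by proper fine compactified Jacobians $\J_X(\un q)$, I would run the seesaw principle (\cite[Sec. II.5, Cor. 6]{Mum}) with $\J_X(\un q)$ as the complete factor and the $(L,J)$-space as base: the fibre over a fixed $(L,J)$ lets $I$ vary, and there the identity is precisely Corollary \ref{C:compdet} applied with $\F=\I$ the universal sheaf over $\J_X(\un q)$, with $L$ in the role of the fixed line bundle and the line bundle $J$ in the role of the torsion-free sheaf. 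The hypotheses of Corollary \ref{C:compdet} hold on all of $\J_X(\un q)$ because, $J$ being invertible, its divisors $E_1,E_2$ can be chosen in the smooth locus (Lemma \ref{L:sheaves}), along which every torsion-free sheaf is automatically locally free. Pinning down the resulting seesaw ambiguity by the canonicity of the determinant-of-cohomology isomorphisms then upgrades these fibrewise identities to the genuine isomorphism on $U_J$.

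Finally I would conclude the global formula by Cohen--Macaulay extension, and deduce $\ov\P_{L\otimes I}=\P_L\otimes\ov\P_I$ by restricting it to $\{L\}\times\{I\}\times\bJbar_X^0$: there $\mu\times\id$ identifies the fibre with $\{L\otimes I\}\times\bJbar_X^0$, so the left side becomes $\ov\P_{L\otimes I}$ (using Proposition \ref{P:symm}), while $\pi_{13}$ and $\pi_{23}$ restrict to $\P_L$ and $\ov\P_I$. The main obstacle is the bilinearity step: since no global family of divisors presents the universal torsion-free sheaf over $\bJbar_X^0$, Corollary \ref{C:compdet} cannot be applied directly with a moving torsion-free argument, and one is forced to import properness through a fine compactified Jacobian in order to run seesaw and to confine the non-locally-free behaviour to the strata where the complementary variable is invertible; keeping track of the twisting line bundle $N$ throughout is the attendant bookkeeping.
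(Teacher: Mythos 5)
Your overall architecture is sound and, up to reorganization, close to the paper's: the reduction of the global identity to the open set $U=\pi_{23}^{-1}(\Open)=(\mu\times\id_{\bJbar_X^0})^{-1}(\Open)$ via maximal Cohen--Macaulayness of both sides and codimension $\geq 2$ of the complement is correct (the paper instead runs its generalized seesaw, Lemma \ref{L:seesaw}, directly on the Cohen--Macaulay sheaves, with the $L$-direction $\bJ_X^0$ as the base $T$ and the $(I,J)$-direction as the fibre, invoking Remark \ref{R:CM-ext} only inside the fibrewise check), and your fibrewise bilinearity over a fixed $(L,J)$ with $I$ moving in a proper $\J_X(\un q)$ is exactly an instance of Corollary \ref{C:compdet}, legitimately applicable because the fixed line bundle $J$ admits a divisor presentation supported in $\Xsm$, where the universal sheaf is locally free.

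The genuine gap is in your last step of the bilinearity argument. The classical seesaw with $\J_X(\un q)$ as the complete factor only yields that the two line bundles on $U_J$ differ by the pullback of a line bundle from the $(L,J)$-base $\bJ_X^0\times\bJ_X^0$; to kill this ambiguity you must verify the identity on a transverse slice $\{I_0\}\times(\bJ_X^0\times\bJ_X^0)$ for some fixed $I_0$, i.e. $(t_{I_0}\times\id)^*\P\cong\P\otimes p_2^*(\P_{I_0})$ on $\bJ_X^0\times\bJ_X^0$. Your appeal to ``canonicity of the determinant-of-cohomology isomorphisms'' does not supply this: the isomorphism of Lemma \ref{L:compu-det}/Corollary \ref{C:compdet} is canonical only \emph{after} a choice of divisor presentation $J=I_{E_1}\otimes I_{E_2}^{-1}$, and no such presentation can be chosen algebraically as $J$ varies over $\bJ_X^0$, so the fibrewise isomorphisms do not automatically glue. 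This slice identity is a real additional input --- it is precisely the paper's \eqref{E:restr-Zbis}, supplied by \cite[Lemma 8.3]{MRV}, and the companion computation \eqref{E:restr-Tbis}--\eqref{E:restr-Tter} for the fixed-$L$ family is carried out in the paper by applying Lemma \ref{L:compu-det} three times with a divisor presentation of the \emph{fixed} line bundle $L$. Your argument can be repaired by adding exactly such a slice check (fix a presentation of $I_0$ supported in $\Xsm$ and run Lemma \ref{L:compu-det} in the family over the $(L,J)$-base), but as written the step that upgrades the fibrewise identities to an isomorphism on $U_J$ does not close.
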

\begin{proof}
We are going to apply Lemma \ref{L:seesaw} with $T=\bJ_X^{1-g}$, $Z=\bJbar_X^{1-g}\times \bJbar_X^{1-g}$, $\F= \pi_{13}^*(\P)\otimes \pi_{23}^*(\ov\P)$ and $\G=(\mu\times {\id}_{\bJbar_X^{1-g}})^*(\ov\P)$.
Let us check that the hypothesis of the Lemma are satisfied.

First of all, $\bJ_X^{1-g}$ and $\bJbar_X^{1-g}$ are reduced locally Noetherian schemes by Facts \ref{F:propJbig}\eqref{F:propJbig1} and \ref{F:Este-Jac}\eqref{F:Este-Jac3}.
Moreover, $\bJbar_X^{1-g}\times \bJbar_X^{1-g}$ can be covered by the countably many connected and proper open subsets $\J_X(\un q)\times \J_X(\un q')$  by Facts \ref{F:Este-Jac} and Fact
\ref{F:prop-Jac}\eqref{F:prop-Jac1}.

For any $L\in \bJ_X^{1-g}$ and any $\J_X(\un q)\times \J_X(\un q')\subseteq \bJbar_X^{1-g}\times \bJbar_X^{1-g}$, the sheaf $\pi_{13}^*(\P)\otimes \pi_{23}^*(\ov\P)_{|\{L\}\times \J_X(\un q)\times \J_X(\un q')}$ is simple
because $\P$ is a line bundle and, by Definition \ref{D:Poinc}, $\ov\P_{|\J_X(\un q)\times \J_X(\un q')}$ is push-forward of the line bundle $\P$ from the open subset
$J_X(\un q)\times \J_X(\un q')\cup \J_X(\un q)\times J_X(\un q')$ of $\J_X(\un q)\times \J_X(\un q')$  whose complement has codimension greater or equal than two. Therefore, hypothesis
\eqref{hyp4} of Lemma \ref{L:seesaw} is satisfied.

In order to check that hypothesis \eqref{hyp6} of Lemma \ref{L:seesaw} is satisfied, we will check that for any $(I_1,I_2)\in \bJ_X^{1-g}\times \bJ_X^{1-g}\subseteq \bJbar_X^{1-g}\times \bJbar_X^{1-g}$ it holds that
\begin{equation}\label{E:restr-Z}
(\mu\times {\id}_{\bJbar_X^{1-g}})^*(\ov\P)_{|\bJ_X^{1-g}\times \{I_1\}\times \{I_2\}}=\pi_{13}^*(\P)\otimes \pi_{23}^*(\ov\P)_{|\bJ_X^{1-g}\times \{I_1\}\times \{I_2\}}.
\end{equation}
This will imply that hypothesis \eqref{hyp6} is satisfied since $\bJ_X^{1-g}\times \bJ_X^{1-g}$ is dense in $\bJbar_X^{1-g}\times \bJbar_X^{1-g}$ by Fact \ref{F:propJbig}\eqref{F:propJbig2}.
After identifying $\bJ_X^{1-g}\times \{I_1\}\times \{I_2\}$ with $\bJ_X^{1-g}$,  \eqref{E:restr-Z} is equivalent to
\begin{equation}\label{E:restr-Zbis}
t_{I_1}^*(\P_{I_2})=\P_{I_2},
\end{equation}
where $t_{I_1}:\bJ_X^{1-g}\to \bJ_X^{1-g}$ is the translation map sending $L$ into $L\otimes I_1$. Equality \eqref{E:restr-Zbis} follows now from  \cite[Lemma 5.4]{MRV2}.

Finally, in order to check that hypothesis \eqref{hyp3} of Lemma \ref{L:seesaw} is satisfied, we need to prove that for any $L\in \bJ_X^{1-g}$ Êwe have that
\begin{equation}\label{E:restr-T}
(\mu\times {\id}_{\bJbar_X^{1-g}})^*(\ov\P)_{|\{L\}\times \bJbar_X^{1-g}\times \bJbar_X^{1-g}}=\pi_{13}^*(\P)\otimes \pi_{23}^*(\ov\P)_{|\{L\}\times \bJbar_X^{1-g}\times \bJbar_X^{1-g}}.
\end{equation}
Identifying $\{L\}\times \bJbar_X^{1-g}\times \bJbar_X^{1-g}$ with $\bJbar_X^{1-g}\times \bJbar_X^{1-g}$, \eqref{E:restr-T}Ê is equivalent to
\begin{equation}\label{E:restr-Tbis}
(t_L\times \id)^*(\ov\P)=\pi_2^*(\P_L)\otimes \ov\P,
\end{equation}
where $t_L:\bJbar_X^{1-g}\to \bJbar_X^{1-g}$ is the translation map sending $I$ to $I\otimes L$ and $\pi_2: \bJbar_X^{1-g}\times \bJbar_X^{1-g}\to \bJbar_X^{1-g}$ is the projection onto the second factor.
Since the sheaves appearing on the left and right hand side of \eqref{E:restr-Tbis} are Cohen-Macaulay sheaves by Theorem \ref{T:Poinc}\eqref{T:Poinc1}, it is enough, by Remark  \ref{R:CM-ext},
to show that we have the equality of sheaves on $\Open$:
\begin{equation}\label{E:restr-Tter}
(t_L\times \id)^*(\P)=\pi_2^*(\P_L)\otimes \P.
\end{equation}
From the definition of $\P$ in \S\ref{S:Poinc1} (keeping the same notation) and using the base change property of the determinant of cohomology (see \cite[Prop. 44(1)]{est1}), we get
\begin{equation}\label{E:equaz1}
(t_L\times \id)^*\P=\D_{p_{23}}(p_{12}^*\I\otimes p_1^*L\otimes p_{13}^*\calI)^{-1}\otimes \D_{p_{23}}(p_{12}^*\I\otimes p_1^*L)\otimes \D_{p_{23}}(p_{13}^*\I),
\end{equation}
\begin{equation}\label{E:equaz2}
\begin{aligned}
\pi_2^*(\P_L)\otimes \ov\P&= \D_{p_{23}}(p_1^*L\otimes p_{13}^*\calI)^{-1}\otimes \D_{p_{23}}(p_1^*L)\otimes \D_{p_{23}}(p_{13}^*\I)\otimes \\
 &\otimes \D_{p_{23}}(p_{12}^*\I\otimes  p_{13}^*\calI)^{-1}\otimes \D_{p_{23}}(p_{12}^*\I)\otimes \D_{p_{23}}(p_{13}^*\I).
\end{aligned}
\end{equation}
Since $L$ is a line bundle of degree zero on $X$, we can find two reduced Cartier divisors $E_1=\sum_{j=1}^n q_j^1$ and $E_2=\sum_{j=1}^n q_j^2$ of the same degree on $X$,
supported on the smooth locus of $X$, such that
$L=I_{E_1}\otimes I_{E_2}^{-1}=\O_X(-E_1+E_2)$. Using \eqref{E:equaz1} and \eqref{E:equaz2},Ê together with the easy fact that $\D_{p_{23}}(p_1^*L)=\O$, and
 applying three times Lemma  \ref{L:compu-det} to the sheaves $p_{12}^*\I$, $p_{13}^*\I$ and $p_{12}^*\I\otimes p_{13}^*\I$,  we get  Ê
\begin{equation}\label{E:equaz3}
\begin{aligned}
 (t_L\times \id)^*(\P) & \otimes \pi_2^*(\P_L)^{-1}\otimes \P^{-1}= \D_{p_{23}}(\left(p_{12}^*\I\otimes  p_{13}^*\calI\right)_{|p_1^{-1}(E_2)})^{-1}\otimes
\D_{p_{23}}(\left(p_{12}^*\I\otimes  p_{13}^*\calI\right)_{|p_1^{-1}(E_1)})\otimes\\
& \otimes \D_{p_{23}}(p_{12}^*\I_{|p_1^{-1}(E_2)}) \otimes \D_{p_{23}}(p_{12}^*\I_{|p_1^{-1}(E_1)})^{-1} \otimes
\D_{p_{23}}(p_{13}^*\I_{|p_1^{-1}(E_2)}) \otimes \D_{p_{23}}(p_{13}^*\I_{|p_1^{-1}(E_1)})^{-1}
\end{aligned}
\end{equation}
Observe now that for any coherent sheaf $\F$ on $X\times \Open$ whose restriction to $p_1^{-1}(E_i)=p_1^{-1}(\sum_j q_j^i)$ (for $i=1,2$) is a line bundle,  from the definition of the determinant of cohomology
it follows that
\begin{equation*}\label{E:equaz4}
\D_{p_{23}}(\F_{|p_1^{-1}(E_i)})=\bigotimes_{j} \F_{|\{q_j^i\}\times \Open}.
\end{equation*}
This implies that for any $i=1,2$ we have
\begin{equation}\label{E:equaz5}
\D_{p_{23}}(\left(p_{12}^*\I\otimes  p_{13}^*\calI\right)_{|p_1^{-1}(E_i)})= \D_{p_{23}}(p_{12}^*\I_{|p_1^{-1}(E_i)}) \otimes\D_{p_{23}}(p_{13}^*\I_{|p_1^{-1}(E_i)}).
\end{equation}
Substituting \eqref{E:equaz5} into \eqref{E:equaz3}, we get that
\begin{equation*}
(t_L\times \id)^*(\P)  \otimes \pi_2^*(\P_L)^{-1}\otimes \P^{-1}=\O_{\Open},
\end{equation*}
which shows that \eqref{E:restr-Tter} holds true.

Therefore, all the hypothesis of Lemma \ref{L:seesaw} are in our case satisfied and the thesis of that Lemma concludes our proof.

\end{proof}

The following Lemma, which is a generalization of the classical seesaw principle (see \cite[Sec. II.5, Cor. 6]{Mum}), was used in the proof of Proposition \ref{P:mult}.

\begin{lemma}[Seesaw principle]\label{L:seesaw}
Let $Z$ and $T$ be two reduced locally Noetherian schemes such that $Z$ admits an open cover $\displaystyle Z=\bigcup_{m\in \bbN} U_{m}$, with $U_{m}$ proper and connected.
Let $\F$ and $\G$ be two coherent sheaves on $Z\times T$, flat over $T$, such that
\begin{enumerate}[(i)]
\item \label{hyp3} $\F_{|Z \times \{t\}}\cong \G_{|Z\times \{t\}}$ for every $t\in T$;
\item  \label{hyp4} $\F_{|U_{m}\times \{t\}}$ is simple for every $t\in T$ and $m\in \bbN$;
\item \label{hyp6} for every connected component $W$ of $Z$, there exists $z_0\in W$ and isomorphism $\psi: \F_{|\{z_0\}\times T}\stackrel{\cong}{\longrightarrow} \G_{|\{z_0\}\times T}$  of line bundles.
\end{enumerate}
Then $\F\cong \G$.
\end{lemma}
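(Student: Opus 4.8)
The plan is to prove the generalized seesaw principle (Lemma \ref{L:seesaw}) by reducing the global comparison of $\F$ and $\G$ to a local existence-and-uniqueness statement for isomorphisms, exploiting simplicity on each proper connected open piece $U_m$ to rigidify these isomorphisms. Let me sketch the steps.

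\begin{proof}[Proof sketch]
First I would reduce to a single proper connected open subset $U_m$, so set $U:=U_m$ and work with $\F_{|U\times T}$ and $\G_{|U\times T}$. The key point is to produce, for each such $U$, a \emph{canonical} isomorphism over $U\times T$ and then glue these over the cover $Z=\bigcup_m U_m$ using reducedness of $Z$. To produce the isomorphism over $U\times T$, consider the coherent sheaf $\calH:=p_{T*}\SHom_{\O_{U\times T}}(\F,\G)$ on $T$, where $p_T:U\times T\to T$ is the projection. Hypothesis \eqref{hyp4} says that $\F_{|U\times\{t\}}$ is simple, i.e. $\Hom(\F_{|U\times\{t\}},\F_{|U\times\{t\}})=k$; combined with \eqref{hyp3}, which identifies $\F_{|U\times\{t\}}\cong \G_{|U\times\{t\}}$, we get that the fiber $\Hom(\F_{|U\times\{t\}},\G_{|U\times\{t\}})$ is one-dimensional for every $t\in T$, and that its nonzero elements are isomorphisms (since $U$ is proper and connected, a nonzero endomorphism of a simple sheaf is an automorphism). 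By cohomology-and-base-change (using $T$-flatness of $\F$ and $\G$ and properness of $U$), the sheaf $\calH$ is a line bundle on $T$ whose formation commutes with base change, and the evaluation of its sections on each fiber recovers $\Hom(\F_{|U\times\{t\}},\G_{|U\times\{t\}})$.

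Next I would upgrade this to an honest isomorphism. Hypothesis \eqref{hyp6} gives, at a chosen point $z_0$ in the connected component $W\supseteq U$, an isomorphism $\psi:\F_{|\{z_0\}\times T}\stackrel{\cong}{\to}\G_{|\{z_0\}\times T}$. Restricting along $\{z_0\}\times T\hookrightarrow U\times T$ gives a trivialization of $\calH$: concretely, $\psi$ furnishes a nowhere-vanishing global section of the line bundle $\calH$ on $T$ (it is nowhere-vanishing because on each fiber it is an isomorphism, hence a nonzero element of the one-dimensional fiber space). Therefore $\calH\cong\O_T$ and it possesses a section $\phi\in\Hom_{U\times T}(\F,\G)$ restricting to $\psi$ at $z_0$. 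The crucial observation is that $\phi$ is fiberwise an isomorphism: on each fiber $U\times\{t\}$, the section $\phi$ specializes to a nonzero element of the one-dimensional space $\Hom(\F_{|U\times\{t\}},\G_{|U\times\{t\}})$ (nonzero precisely because its value at $z_0$ is the isomorphism $\psi_t$), and every nonzero element there is an isomorphism. A morphism of $T$-flat coherent sheaves that is an isomorphism on every fiber is itself an isomorphism (by Nakayama applied to kernel and cokernel, or by fiberwise surjectivity plus flatness), so $\phi:\F_{|U\times T}\stackrel{\cong}{\to}\G_{|U\times T}$.

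Finally I would glue. The section $\phi$ constructed on $U_m\times T$ is \emph{canonical up to a unit pulled back from $T$}: the space of fiberwise isomorphisms is a torsor under $\Gamma(U_m\times T,\O^\times)=\Gamma(T,\O^\times)$ by simplicity and connectedness, so on overlaps $U_m\cap U_{m'}$ the two constructed isomorphisms differ by a unit on $T$. To rigidify and glue coherently across the whole connected component $W$, I would normalize all the $\phi$ on the different $U_m\subseteq W$ so that they agree with the single fixed $\psi$ at the chosen point $z_0\in W$; since each $U_m$ meeting $W$ is connected and $Z$ is reduced, the normalized isomorphisms agree on overlaps and patch to a global isomorphism $\F_{|W\times T}\cong\G_{|W\times T}$. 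Running this over every connected component $W$ of $Z$ yields $\F\cong\G$, as desired. The main obstacle is the base-change argument showing $\calH$ is a line bundle with the correct fiberwise behavior — this is where properness of each $U_m$ and $T$-flatness of $\F,\G$ are essential — together with the bookkeeping needed to normalize the local isomorphisms consistently across overlaps so that no monodromy obstruction arises, which is exactly what the choice of a single basepoint $z_0$ per connected component in \eqref{hyp6} is designed to eliminate.
\end{proof}
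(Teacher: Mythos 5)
Your local construction on a single proper connected piece is exactly the mechanism the paper uses: simplicity of $\F_{|U\times\{t\}}$ together with the fiberwise isomorphisms makes $p_{T*}\SHom(\F,\G)$ a line bundle on $T$ (reducedness of $T$ and properness of $U$ enter here via semicontinuity and base change), and the trivialization supplied by $\psi$ at $z_0$ yields a fiberwise-nonzero, hence genuine, isomorphism over $U\times T$. The gap is in the gluing. First, for a piece $U_m$ with $z_0\notin U_m$ you have no trivialization of the line bundle $\calH_m=p_{T*}\SHom(\F_{|U_m\times T},\G_{|U_m\times T})$, so you have not even established that an isomorphism over $U_m\times T$ exists, and ``normalizing at $z_0$'' is not meaningful there. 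Second, even granting existence, the claim that two such isomorphisms differ on an overlap $U_m\cap U_{m'}$ by a unit pulled back from $T$ is unjustified: the overlap is neither proper nor covered by the simplicity hypothesis, so $(\phi_{m'})^{-1}\circ\phi_m$ is merely some automorphism of $\F_{|(U_m\cap U_{m'})\times T}$, which can lie well outside $\Gamma(T,\O_T^{\times})$; and even when it is such a unit, fixing a single basepoint does not by itself rule out monodromy around loops in the nerve of the cover.

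The paper avoids both problems by never gluing isomorphisms across overlaps. After reordering the cover so that $z_0\in U_0$ and each $V_m:=U_0\cup\cdots\cup U_m$ is connected (hence still proper), it checks that $\F_{|V_m\times\{t\}}$ is still simple (an endomorphism restricts to a scalar on each $U_n\times\{t\}$, and the scalars match on the nonempty overlaps), runs your line-bundle argument directly on $V_m\times T$ to obtain the \emph{unique} isomorphism $\phi_m$ restricting to $\psi$ over $\{z_0\}\times T$, and then observes that $(\phi_m)_{|V_{m-1}\times T}=\phi_{m-1}$ by that very uniqueness, so the $\phi_m$ patch automatically. This nested exhaustion is the ingredient your sketch is missing; with it, the rest of your argument goes through as written.
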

\begin{proof}
Clearly, it is enough to prove the result for every connected component of $Z$; hence, we can assume that $Z$ is connected. Moreover, up to reordering the open subsets $U_{m}$,
we can assume that $z_0\in U_0$ and that  for every $m\in \bbN$ the open subset
$\displaystyle V_m:=\bigcup_{0\leq n\leq m} U_m$ is connected.
For every $m\in \bbN$, we set $\F_m:=\F_{|V_m\times T}$ and $\G_m:=\G_{|V_m\times T}$. The hypothesis \eqref{hyp3} and \eqref{hyp4} on $\F$ and $\G$, together with the fact that
$V_{m-1}\cap U_m\neq \emptyset$ (since $V_m$ is connected),  imply the following (for every $m\in \bbN$):
\begin{enumerate}[(a)]
\item \label{hyp3bis} $(\F_m)_{|V_m \times \{t\}}\cong (\G_m)_{|V_m\times \{t\}}$ for every $t\in T$;
\item  \label{hyp4bis} $(\F_m)_{|V_{m}\times \{t\}}$ is simple for every $t\in T$;
\end{enumerate}

\un{CLAIM:} For every $m\in \bbN$, there exists a unique isomorphism $\phi_m:\F_m \stackrel{\cong}{\longrightarrow} \G_m$ such that its restriction to $\{z_0\}\times T$
coincides with $\psi$.

Indeed, because of \eqref{hyp3bis}Ê and \eqref{hyp4bis}, the sheaf $(p_2)_*({\mathcal Hom}(\F_m,\G_m))$ is a line bundle on $T$,
where $p_2:V_m\times T \to T$ denotes the projection onto the second factor. Moreover, since $z_0\in U_0\subseteq V_m$ and using \eqref{hyp6}, we get that  $(p_2)_*({\mathcal Hom}(\F_m,\G_m))=\O_T$.
The isomorphism $\psi$ of \eqref{hyp6} defines a non-zero constant section of $(p_2)_*({\mathcal Hom}(\F_0,\G_0))=\O_T$, which gives rise to an isomorphism
$\phi_m: \F_m \stackrel{\cong}{\longrightarrow} \G_m$ via the natural evaluation morphism $\F_m\otimes p_2^*\left((p_2)_*({\mathcal Hom}(\F_m,\G_m))\right)\to \G_m$.
By construction, $\phi_m$ is the unique isomorphism whose restriction to  $\{z_0\}\times T$ is the isomorphism $\psi$ of \eqref{hyp6}, q.e.d.

\vspace{0.1cm}

The Claim implies that for any $m\geq 1$, we have that $(\phi_{m})_{|V_{m-1}\times T}=\phi_{m-1}$;  hence, the isomorphisms $\phi_m$ glue together producing an isomorphism
$\phi:\F \stackrel{\cong}{\longrightarrow} \G$.

\end{proof}

From the above Lemma, we get the following Corollary which will be used later on.

\begin{cor}\label{C:seesaw}
Let $Z$ and $T$ be two reduced locally Noetherian schemes and assume that $Z$ admits an open cover $ \displaystyle Z=\bigcup_{\alpha\in \A} U_{\alpha}$, with $U_{\alpha}$ proper and connected. Let $\L$ and $\M$ be two line bundles on $Z\times T$ such that:
\begin{enumerate}[(a)]
\item $\L_{|Z\times \{t\}}=\M_{|Z\times \{t\}}$ for every $t\in T$,
\item $\L_{|\{z\}\times T}=\M_{|\{z\}\times T}$ for every $z\in Z$.
\end{enumerate}
Then $\L=\M$.
\end{cor}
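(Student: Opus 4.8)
The plan is to deduce the Corollary directly from the Seesaw principle of Lemma \ref{L:seesaw}, applied to the coherent sheaves $\F := \L$ and $\G := \M$ on $Z \times T$. First I would note that $\L$ and $\M$, being line bundles, are coherent and flat over $T$: all schemes here are $k$-schemes, so the projection $Z \times T \to T$ is the base change along $T \to \Spec k$ of the (flat) structure morphism $Z \to \Spec k$, hence is itself flat, and a locally free sheaf on $Z \times T$ is therefore $T$-flat. Together with the given cover $Z = \bigcup_{\alpha} U_\alpha$ by proper connected opens, this puts us in the setting of Lemma \ref{L:seesaw}.

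Then I would check the three conditions \eqref{hyp3}--\eqref{hyp6}. Condition \eqref{hyp3} is exactly hypothesis (a). For condition \eqref{hyp4} I must show that the restriction of a line bundle to $U_\alpha \times \{t\}$ is simple; writing $\{t\} = \Spec \kappa(t)$, the scheme $U_\alpha \times \{t\} = U_\alpha \times_k \kappa(t)$ is proper over $\kappa(t)$, and since $U_\alpha$ is an open subscheme of the reduced scheme $Z$ it is reduced, connected and proper over the algebraically closed field $k$, so $H^0(U_\alpha, \O_{U_\alpha}) = k$; by flat base change $H^0(U_\alpha \times_k \kappa(t), \O) = \kappa(t)$, whence every line bundle on $U_\alpha \times \{t\}$ has endomorphism ring $\kappa(t)$ and is simple. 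Finally, condition \eqref{hyp6} is provided by hypothesis (b): choosing any $z_0$ in a given connected component of $Z$, the equality $\L_{|\{z_0\}\times T} = \M_{|\{z_0\}\times T}$ supplies the required isomorphism $\psi$ of line bundles on $\{z_0\}\times T$.

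With all hypotheses verified, Lemma \ref{L:seesaw} gives an isomorphism $\L \cong \M$, that is, $\L = \M$ in $\Pic(Z \times T)$, as claimed. The only point that needs a little care is the matching of index sets, since Lemma \ref{L:seesaw} is stated for a countable cover indexed by $\bbN$: before invoking it I would reduce to $Z$ connected (the assertion is component-wise, and the connected components of a locally Noetherian scheme are open) and then select from $\{U_\alpha\}$ a chain of proper connected opens linking the relevant points to $z_0$. In the place where the Corollary is applied, namely Proposition \ref{P:mult}, the cover is already countable, so this reduction is harmless. I expect this bookkeeping to be the sole mild obstacle, every other step being a formal verification.
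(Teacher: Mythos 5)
Your proof is correct and is exactly the paper's intended argument: the Corollary is stated as an immediate consequence of Lemma \ref{L:seesaw}, and your verification of its hypotheses (flatness over $T$, simplicity of line bundles on $U_\alpha\times\{t\}$ via $H^0(U_\alpha,\O_{U_\alpha})=k$ and flat base change, and the reduction to connected $Z$ with a chain of overlapping proper connected opens through $z_0$) supplies precisely the routine details the paper leaves implicit. The only nit is that the Corollary is actually invoked in Proposition \ref{P:duality} (Proposition \ref{P:mult} uses the Lemma directly), which does not affect the argument.
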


Combining Propositions \ref{P:symm} and \ref{P:mult}, we immediately get the following Corollary, describing the behavior of $\ov\P$ under translations.

\begin{cor}\label{C:transla}
Give two line bundles $M_1,M_2\in \Pic^0(X)$, consider the translation morphism
\begin{equation}\label{E:transla}
\begin{aligned}
t_{(M_1,M_2)}: \bJbar_X^{1-g}\times \bJbar_X^{1-g} & \longrightarrow \bJbar_X^{1-g}\times \bJbar_X^{1-g}, \\
(I_1, I_2) & \mapsto (I_1\otimes M_1, I_2\otimes M_2).
\end{aligned}
\end{equation}
The we have
\begin{equation}\label{E:Poin-tra}
t_{(M_1,M_2)}^*\ov\P=\ov\P\otimes p_1^*(\ov\P_{M_2})\otimes p_2^*(\ov\P_{M_1}).
\end{equation}
\end{cor}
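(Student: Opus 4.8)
The plan is to deduce Corollary \ref{C:transla} directly from the two structural results already established for $\ov\P$, namely Proposition \ref{P:symm} (symmetry under the swap $\sigma$) and Proposition \ref{P:mult} (behaviour under the multiplication map $\mu$). First I would observe that the translation $t_{(M_1,M_2)}$ factors through two ``one-sided'' translations, so it suffices to treat translation in each factor separately and then combine. Concretely, translation by $M_1$ in the first factor is the composite of $\mu\times\id$ with the section $I_1\mapsto(M_1,I_1)$, i.e. $(I_1,I_2)\mapsto(M_1\otimes I_1,I_2)$ is obtained by restricting $\mu\times\id_{\bJbar_X^0}$ along the inclusion $\{M_1\}\times\bJbar_X^0\times\bJbar_X^0\hookrightarrow\bJ_X^0\times\bJbar_X^0\times\bJbar_X^0$.

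The key computation is the one-sided case. Pulling back the identity of Proposition \ref{P:mult}, namely $(\mu\times\id_{\bJbar_X^0})^*(\ov\P)=\pi_{13}^*(\P)\otimes\pi_{23}^*(\ov\P)$, along the inclusion $\{M_1\}\times\bJbar_X^0\times\bJbar_X^0$ and identifying the source with $\bJbar_X^0\times\bJbar_X^0$, I would get
\begin{equation*}
(t_{M_1}\times\id)^*(\ov\P)=p_1^*(\P_{M_1})\otimes\ov\P,
\end{equation*}
where $\P_{M_1}=\P_{|\{M_1\}\times\bJbar_X^0}$ is a line bundle on $\bJbar_X^0$ (this is exactly the content of equation \eqref{E:restr-Tbis}, already proved inside Proposition \ref{P:mult}). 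By the symmetry statement of Proposition \ref{P:symm}, translation by $M_2$ in the second factor gives the analogous formula $(\id\times t_{M_2})^*(\ov\P)=p_2^*(\ov\P_{M_2})\otimes\ov\P$, after noting that $\P_{M_1}=\ov\P_{M_1}$ on the open locus and both sides are Cohen-Macaulay extensions, so the identification $\P_{M}=\ov\P_M$ used in the statement is legitimate (this is where I invoke Remark \ref{R:CM-ext} together with Proposition \ref{P:symm}).

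Finally I would compose the two one-sided translations. Since $t_{(M_1,M_2)}=(t_{M_1}\times\id)\circ(\id\times t_{M_2})$, pulling back $\ov\P$ successively yields
\begin{equation*}
t_{(M_1,M_2)}^*\ov\P=(\id\times t_{M_2})^*\bigl(p_1^*(\ov\P_{M_1})\otimes\ov\P\bigr)=p_1^*(\ov\P_{M_1})\otimes p_2^*(\ov\P_{M_2})\otimes\ov\P,
\end{equation*}
where in the last step I use that $p_1\circ(\id\times t_{M_2})=p_1$ (so the first factor is untouched by the second translation) and the one-sided formula for the $\ov\P$ factor. This matches \eqref{E:Poin-tra} after reconciling the roles of $M_1$ and $M_2$ via the symmetry of Proposition \ref{P:symm}, which swaps $p_1^*(\ov\P_{M_2})$ and $p_2^*(\ov\P_{M_1})$ in the statement. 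The only point requiring care—and the mild obstacle—is keeping the bookkeeping of which $M_i$ appears on which factor consistent with the conventions of Propositions \ref{P:symm} and \ref{P:mult}; once the one-sided identity is correctly oriented, the corollary is a formal consequence and requires no new geometric input.
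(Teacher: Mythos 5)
Your overall strategy---factoring $t_{(M_1,M_2)}$ into two one-sided translations and deducing each from Proposition \ref{P:mult} together with the symmetry of Proposition \ref{P:symm}---is exactly the route the paper takes (the corollary is stated there as an immediate combination of those two propositions). However, as written your argument contains a concrete error at the key step. Restricting the identity $(\mu\times\id_{\bJbar_X^0})^*(\ov\P)=\pi_{13}^*(\P)\otimes\pi_{23}^*(\ov\P)$ to $\{M_1\}\times\bJbar_X^0\times\bJbar_X^0$, the factor $\pi_{13}^*(\P)$ becomes the pullback of $\P_{M_1}=\P_{|\{M_1\}\times\bJbar_X^0}$ along the projection to the \emph{third} coordinate, i.e. $p_2^*(\P_{M_1})$ after the identification, not $p_1^*(\P_{M_1})$; this is precisely what \eqref{E:restr-Tbis} records, namely $(t_L\times\id)^*(\ov\P)=\pi_2^*(\P_L)\otimes\ov\P$ with $\pi_2$ the \emph{second} projection. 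Your mis-oriented one-sided formula propagates to the conclusion $t_{(M_1,M_2)}^*\ov\P=p_1^*(\ov\P_{M_1})\otimes p_2^*(\ov\P_{M_2})\otimes\ov\P$, which is not \eqref{E:Poin-tra}, and the closing appeal to Proposition \ref{P:symm} to ``reconcile the roles of $M_1$ and $M_2$'' does not repair it: the equivariance $\sigma^*\ov\P\cong\ov\P$ says nothing that lets you interchange $M_1$ and $M_2$ in a formula where they occur asymmetrically, since $p_1^*(\ov\P_{M_1})\otimes p_2^*(\ov\P_{M_2})$ and $p_1^*(\ov\P_{M_2})\otimes p_2^*(\ov\P_{M_1})$ are genuinely different line bundles in general.

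Once the orientation is corrected, so that $(t_{M_1}\times\id)^*\ov\P=p_2^*(\ov\P_{M_1})\otimes\ov\P$ and, by Proposition \ref{P:symm}, $(\id\times t_{M_2})^*\ov\P=p_1^*(\ov\P_{M_2})\otimes\ov\P$, there is a second small point your composition step glosses over: the correction bundle produced by the first translation now lives on the factor that the second translation moves, so the composition produces a term $p_2^*\bigl(t_{M_2}^*(\ov\P_{M_1})\bigr)$ and you need the translation-invariance $t_{M_2}^*(\ov\P_{M_1})\cong\ov\P_{M_1}$. This is true and quick---restrict the one-sided formula for $t_{M_2}$ to the slice $\bJbar_X^0\times\{M_1\}$, or use \eqref{E:restr-Zbis} together with Proposition \ref{P:symm}---but it is a step, not a formality, and your write-up avoids it only because of the incorrect orientation. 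With these two corrections the composition yields exactly $\ov\P\otimes p_1^*(\ov\P_{M_2})\otimes p_2^*(\ov\P_{M_1})$ and the corollary follows.
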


\vspace{0.1cm}

Let us finally  examine  the behavior of the Poincar\'e sheaf $\ov\P$ under the duality involution
\begin{equation}\label{E:duality}
\begin{aligned}
\nu: \bJbar_X^{1-g} & \longrightarrow \bJbar_X^{1-g}, \\
I & \mapsto I^{\vee}:=\SHom(I,\O_X).
\end{aligned}
\end{equation}

\begin{prop}\label{P:duality}
The Poincar\'e sheaf $\ov\P$ satisfies the following properties:
\begin{enumerate}[(i)]
\item \label{P:duality1}  $(\nu\times \id_{\bJbar_X^{1-g}})^{*}\ov\P=(\id_{\bJbar_X^{1-g}}\times \nu)^{*}\ov\P=\ov\P^{\vee}$. In particular, $\ov\P_{I^{\vee}}=(\ov\P^{\vee})_I$ for any $I\in \bJbar_X^{1-g}$.
\item \label{P:duality2} $(\nu\times \nu)^{*}\ov\P=\ov\P$.
\end{enumerate}
\end{prop}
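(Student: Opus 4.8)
The plan is to reduce both statements to a single computation of the Poincar\'e line bundle on the open locus $\Open$, and then to invoke the uniqueness of Cohen--Macaulay extensions. I begin with two reductions for part \eqref{P:duality1}. Since $\sigma\circ(\nu\times\id_{\bJbar_X^0})\circ\sigma=\id_{\bJbar_X^0}\times\nu$ and both $\ov\P$ and $\ov\P^\vee$ are $\sigma$-equivariant (Proposition \ref{P:symm} and Proposition \ref{P:dual}\eqref{P:dual3}), the two equalities $(\nu\times\id)^*\ov\P=\ov\P^\vee$ and $(\id\times\nu)^*\ov\P=\ov\P^\vee$ are equivalent, so it suffices to prove the first. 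Next, $\nu$ is an involution, hence an isomorphism, and it preserves both $\bJ_X^0$ and $\bJbar_X^0$ (as $\deg I^*=-\deg I=0$); thus $\nu\times\id$ is an automorphism of $\bJbar_X^0\times\bJbar_X^0$ preserving $\Open$. Consequently $(\nu\times\id)^*\ov\P$ is maximal Cohen--Macaulay by Theorem \ref{T:Poinc}\eqref{T:Poinc1}, while $\ov\P^\vee$ is maximal Cohen--Macaulay by Proposition \ref{P:dual}\eqref{P:dual2}. Since the complement of $\Open$ has codimension at least two (Fact \ref{F:propJbig}\eqref{F:propJbig2}), Remark \ref{R:CM-ext} reduces the claim to the equality of line bundles $(\nu\times\id)^*\P=(\ov\P^\vee)_{|\Open}=\SHom\bigl(\P,\,\omega_{\bJbar_X^0\times\bJbar_X^0}\bigr)_{|\Open}$ on $\Open$.

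The heart of the argument is this computation on $\Open$, which I would carry out through the determinant of cohomology using the defining formula \eqref{E:Poin-sheaf}. The sheaf $\SHom(\I,\O_{X\times\bJbar_X^0})$ is flat over $\bJbar_X^0$ with fibres $I^\ast$, and its formation commutes with base change because $\EExt^1(\I,\O)=0$ by \cite[Prop. 1.6]{Har3}; hence by the universal property in Fact \ref{F:huge-Jac}\eqref{F:huge3} it is classified by $\nu$, giving $(\id_X\times\nu)^*\I\cong\SHom(\I,\O)\otimes\pi_2^*N$ for some $N\in\Pic(\bJbar_X^0)$, where the twist by $N$ is harmless since $\P$ is independent of the chosen universal sheaf in degree zero (Remark \ref{R:dep-sheaf}). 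Substituting into \eqref{E:Poin-sheaf} expresses $(\nu\times\id)^*\P$ through the determinants $\D_{p_{23}}\bigl(p_{12}^*\SHom(\I,\O)\otimes p_{13}^*\I\bigr)$, $\D_{p_{23}}\bigl(p_{12}^*\SHom(\I,\O)\bigr)$ and $\D_{p_{23}}(p_{13}^*\I)$. I would then invoke relative Grothendieck--Serre duality for $p_{23}:X\times\Open\to\Open$, whose relative dualizing sheaf is $p_X^*\omega_X$ with $p_X:X\times\Open\to X$, in order to rewrite each determinant involving $\SHom(\I,\O)$ in terms of the corresponding determinant of $\I$; the resulting correction terms, evaluated by writing $\omega_X=I_{E_1}\otimes I_{E_2}^{-1}$ as in Lemma \ref{L:sheaves} and applying Lemma \ref{L:compu-det}, should assemble exactly into the factor $p_1^*\omega_{\bJbar_X^0}\otimes p_2^*\omega_{\bJbar_X^0}$ that distinguishes $\P^{-1}$ from $\P^\vee$.

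The hard part is precisely this bookkeeping: one must verify that the copies of $\omega_X$ introduced by duality on the curve reorganize, via the restriction-to-divisor identities of Lemma \ref{L:compu-det}, into the dualizing sheaf of the base $\bJbar_X^0\times\bJbar_X^0$, with no residual discrepancy beyond a line bundle pulled back from a single factor that the symmetry and seesaw constraints (Proposition \ref{P:symm}, Corollary \ref{C:seesaw}) force to be trivial. Once $(\nu\times\id)^*\P=(\ov\P^\vee)_{|\Open}$ is established, the reduction above yields $(\nu\times\id)^*\ov\P=\ov\P^\vee$, and the ``in particular'' statement $\ov\P_{I^*}=(\ov\P^\vee)_I$ follows by restriction to the slice $\bJbar_X^0\times\{I\}$ together with Proposition \ref{P:dual}\eqref{P:dual4}.

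Finally, part \eqref{P:duality2} is a formal consequence of \eqref{P:duality1}. Since $\nu\times\id$ is an automorphism with $(\nu\times\id)^*\omega_{\bJbar_X^0\times\bJbar_X^0}\cong\omega_{\bJbar_X^0\times\bJbar_X^0}$ (because $\nu^*\omega_{\bJbar_X^0}\cong\omega_{\bJbar_X^0}$), its pullback commutes with the dualizing functor $\bbD$. Therefore $(\nu\times\nu)^*\ov\P=(\nu\times\id)^*(\id\times\nu)^*\ov\P=(\nu\times\id)^*(\ov\P^\vee)=\bigl((\nu\times\id)^*\ov\P\bigr)^\vee=(\ov\P^\vee)^\vee=\ov\P$, where the second and fourth equalities use \eqref{P:duality1} and the last uses the involutivity of $\bbD$ (Fact \ref{F:dual-func}\eqref{F:dual-func1}). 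This completes the proof modulo the determinant computation of the second paragraph.
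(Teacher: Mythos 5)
Your overall skeleton is sound and matches the paper's: deduce \eqref{P:duality2} formally from \eqref{P:duality1} using that $\nu$ is an involution, reduce \eqref{P:duality1} by symmetry to the case of $\nu\times \id$, and reduce further to an identity of line bundles on $\Open$ via Remark \ref{R:CM-ext} (the paper in fact only invokes Remark \ref{R:CM-ext} and the definition of $\P$; your extra observation that $(\nu\times\id)^*\ov\P$ is maximal Cohen--Macaulay is correct but not needed once you work on $\Open$). The identification $(\id_X\times\nu)^*\I\cong\SHom(\I,\O)$ up to a twist from the base is also what the paper uses. One small point: since $\omega_{\bJbar_X^0\times\bJbar_X^0}$ is trivial (Fact \ref{F:prop-fineJac}\eqref{F:prop-fineJac4}), on $\Open$ one has simply $\P^{\vee}=\P^{-1}$; there is no correction factor ``distinguishing $\P^{-1}$ from $\P^{\vee}$'' to account for.

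The genuine gap is the central computation, which you explicitly defer, and the route you sketch for it does not obviously close. Applying relative Grothendieck--Serre duality to the family $p_{23}:X\times\Open\to\Open$ gives control of $\D_{p_{23}}(\SHom(\F,\O)\otimes p_1^*\omega_X)$ in terms of $\D_{p_{23}}(\F)$, which handles the unmixed term $\D_{p_{23}}(p_{12}^*\I^{\vee})$; but the problematic term is the mixed one, $\D_{p_{23}}(p_{12}^*\I^{\vee}\otimes p_{13}^*\I)$, and the relative dual of $p_{12}^*\I\otimes p_{13}^*\I$ is $p_{12}^*\I^{\vee}\otimes p_{13}^*\I^{\vee}$ (both factors get dualized), not the sheaf you need. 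The paper sidesteps this by a seesaw argument: it proves the equality $\P^{\vee}=(\nu\times\id)^*\P$ after restriction to each slice $\{I\}\times\bJbar_X^0$ and each slice $\bJbar_X^0\times\{I\}$ with $I\in\bJ_X^0$ a degree-zero \emph{line bundle}, writes $I=I_{E_1}\otimes I_{E_2}^{-1}$ with $E_1,E_2$ supported on $X_{\rm sm}$, and applies Lemma \ref{L:compu-det} to reduce everything to determinants of restrictions to $p_1^{-1}(E_i)$; relative duality then enters only for the \emph{finite} morphisms $p_1^{-1}(E_i)\to\bJbar_X^0$, to identify $\det\pi_*(\I_{|p_1^{-1}(E_i)})^{\vee}$ with $\det\pi_*(\I^{\vee}_{|p_1^{-1}(E_i)})$ (here the relative dualizing sheaf is trivial because $E_i$ is curvilinear). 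Corollary \ref{C:seesaw} then yields the identity on $\bJ_X^0\times\bJbar_X^0$ and on $\bJbar_X^0\times\bJ_X^0$, and the two pieces are glued over their intersection. Without either this slice-by-slice reduction or a genuine argument for the mixed determinant term, your proof is incomplete at its key step.
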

\begin{proof}
First of all, notice that (ii) follows from (i) observing that
$$(\nu\times\nu)=(\nu\times \id_{\bJbar_X^{1-g}})\circ(\id_{\bJbar_X^{1-g}}\times \nu)$$
and using the fact that, since $\nu$ is an involution, if (i) holds for $\ov\P$ then it also holds for $\ov \P^{\vee}$.

Let us prove (i) for the morphism $(\nu\times\id_{\bJbar_X^{1-g}})$; the case of $(\id_{\bJbar_X^{1-g}}\times \nu)$
is dealt with in the same way. Using Remark \ref{R:CM-ext}, it is enough to prove the result for the Poincar\'e line bundle $\P$ on  $\Open$.
Consider the following diagram
$$
\xymatrix{
X\times  \Open \ar[rr]_{\id_X\times\nu\times \id_{\bJbar_X^{1-g}}}  \ar[d]_{p_{23}} & & X \times \Open \ar[d]_{p_{23}}\\
\Open  \ar[rr]_{\nu\times \id_{\bJbar_X^{1-g}}} & & \Open.
}
$$
By definition \eqref{E:Poin-sheaf}, we get
\begin{equation}\label{E:P-dual}
\P^\vee=\D(p_{12}^{*}\I\otimes p^*_{13}\I)\otimes \D(p_{12}^*\I)^{-1}\otimes \D(p_{13}^*\I)^{-1},
\end{equation}
where $\D$ denotes the determinant of cohomology with respect to the projection map $p_{23}$.

By the base change property of the determinant of cohomology (see e.g. \cite[Prop. 44(2)]{est1}) and using the equalities
\begin{equation}
\left\{\begin{array}{l}
 (\id_X\times\nu\times \id_{\bJbar_X^{1-g}})^*(p^*_{13} \I)= p_{13}^* \I,\\
 (\id_X\times\nu\times \id_{\bJbar_X^{1-g}})^*(p^*_{12} \I)= p_{12}^* \I^\vee,
\end{array} \right.
\end{equation}
we get that
\begin{equation}\label{E:pull-dual}
(\nu\times\id_{\bJ_X})^*\P= \D(p_{12}^*\I^\vee\otimes p^*_{13}\I)^{-1}\otimes\D(p_{12}^*\I^\vee)\otimes \D(p_{13}^*\I).
\end{equation}

\un{CLAIM 1:} For any  $I\in \bJ_X^{1-g}$, we have that
$$\P^\vee_{|\{I\}\times \bJbar_X^{1-g}}=\left[(\nu\times\id_{\bJ_X})^*\P\right]_{|\{I\}\times \bJbar_X^{1-g}}.$$

Indeed, using \eqref{E:P-dual} and \eqref{E:pull-dual} together with the base change property of the determinant of cohomology, we get
\begin{equation}\label{E:res1}
\begin{sis}
\P^\vee_{|\{I\}\times \bJbar_X^{1-g}}=& \D(p_1^*I\otimes \I)\otimes \D(p_1^*I)^{-1}\otimes \D(\I)^{-1}= \D(p_1^*I\otimes \I)\otimes \D(\I)^{-1},\\
\left[(\nu\times\id_{\bJ_X})^*\P\right]_{|\{I\}\times \bJbar_X^{1-g}}= & \D(p_1^*I^{\vee}\otimes \I)^{-1}\otimes \D(p_1^*I^{\vee})\otimes \D(\I)= \D(p_1^*I^{\vee}\otimes \I)^{-1}\otimes \D(\I),\\
\end{sis}
\end{equation}
where $\D$ denotes now the determinant of cohomology with respect to the projection map $p_2:X\times \bJbar_X^{1-g}\to \bJbar_X^{1-g}$ and $p_1$ is the projection onto the first factor.
Write now $I=I_{E_1}\otimes I_{E_2}^{-1}$ as in Lemma \ref{L:sheaves}.  Since $I$ is a line bundle by assumption, we have that $E_1$ is Cartier; moreover, arguing similarly to the proof of Lemma
\ref{L:sheaves}, we can choose $E_1$ to be supported on the smooth locus of $X$. Hence, the representation $I^{\vee}=I^{-1}=I_{E_2}\otimes I_{E_1}^{-1}$ satisfies the same assumptions as in
Lemma \ref{L:sheaves}. We can now apply Lemma \ref{L:compu-det} twice in order to conclude that
\begin{equation}\label{E:doublelem}
\begin{sis}
& \D(p_1^*I\otimes \I)\otimes \D(\I)^{-1}= \D\left(\I_{|p_1^{-1}(E_2)}\right)\otimes \D\left(\I_{|p_1^{-1}(E_1)}\right)^{-1},\\
& \D(p_1^*I^{\vee}\otimes \I)\otimes \D(\I)^{-1} =\D\left(\I_{|p_1^{-1}(E_1)}\right)\otimes \D\left(\I_{|p_1^{-1}(E_2)}\right)^{-1}.\\
\end{sis}
\end{equation}
Claim 1 follows from \eqref{E:res1} and \eqref{E:doublelem}.

\un{CLAIM 2:} For any  $I\in \bJ_X^{1-g}$, we have that
$$\P^\vee_{|\bJbar_X^{1-g}\times \{I\}}=\left[(\nu\times\id_{\bJ_X})^*\P\right]_{|\bJbar_X^{1-g}\times \{I\}}.$$

Again, using \eqref{E:P-dual} and \eqref{E:pull-dual} together with the base change property of the determinant of cohomology, we get
\begin{equation}\label{E:res2}
\begin{sis}
\P^\vee_{|\bJbar_X^{1-g}\times \{I\}}=& \D(\I\otimes p_1^*I)\otimes \D(\I)^{-1} \otimes \D(p_1^*I)^{-1}=  \D(\I\otimes p_1^*I)\otimes \D(\I)^{-1},\\
\left[(\nu\times\id_{\bJ_X})^*\P\right]_{|\bJbar_X^{1-g}\times \{I\}}= & \D(\I^{\vee}\otimes p_1^*I)^{-1}\otimes \D(\I^{\vee})\otimes \D(p_1^*I)=
\D(\I^{\vee}\otimes p_1^*I)^{-1}\otimes \D(\I^{\vee}),\\
\end{sis}
\end{equation}
where $\D$ denotes now the determinant of cohomology with respect to the projection map $p_2:X\times \bJbar_X^{1-g}\to \bJbar_X^{1-g}$ and $p_1$ is the projection onto the first factor.
Write now $I=I_{E_1}\otimes I_{E_2}^{-1}$ as in Lemma \ref{L:sheaves}.  As before, since $I$ is a line bundle by assumption, we can assume  that $E_1$ is Cartier and supported on the smooth
locus of $X$.  We can apply Lemma \ref{L:compu-det} to the sheaf $\I$ and to its dual $\I^{\vee}$ in order to get  that
\begin{equation}\label{E:rest-div}
\begin{sis}
 \D(\I\otimes p_1^*I)\otimes \D(\I)^{-1} &= \D\left(\I_{|p_1^{-1}(E_2)}\right)\otimes \D\left(\I_{|p_1^{-1}(E_1)}\right)^{-1}, \\
 \D(\I^{\vee}\otimes p_1^*I)\otimes \D(\I^{\vee})^{-1} & =  \D\left(\I^{\vee}_{|p_1^{-1}(E_2)}\right)\otimes \D\left(\I^{\vee}_{|p_1^{-1}(E_1)}\right)^{-1}.
\end{sis}
 \end{equation}
Fix now $i\in \{1,2\}$ and denote by $\pi$  the projection $X\times \bJbar_X^{1-g}\supset p_1^{-1}(E_i)\to \bJbar_X^{1-g}$.
 Since $\I$ is locally free along the divisor $p_1^{-1}(E_i)$  (because $E_i$ is supported on the smooth locus of $X$), we have that $R^1\pi_*(I_{|p_1^{-1}(E_i)})=0$ and that
 $\pi_*(I_{|p_1^{-1}(E_i)})$ is locally free; the same statements hold with $\I$ replaced by $\I^{\vee}$. Therefore, from the definition of the determinant of cohomology (see the discussion in
 \S\ref{S:Poinc1}), it follows that
\begin{equation}\label{E:det-push}
\begin{sis}
&\D\left(\I_{|p_1^{-1}(E_i)}\right)=\det \pi_*(\I_{|p_1^{-1}(E_i)}) , \\
& \D\left(\I^{\vee}_{|p_1^{-1}(E_i)}\right)=\det \pi_*(\I^{\vee}_{|p_1^{-1}(E_i)}) .\\
\end{sis}
\end{equation}
 Applying the relative duality to the finite morphism $\pi$ (see \cite[Sec. III.6]{Har2}) and using that
the relative dualizing sheaf $\omega_{\pi}=p_1^*(\omega_{E_i})$ of $\pi$ is trivial because $E_i$ is a curvilinear $0$-dimensional scheme (hence Gorenstein), we have that
\begin{equation}\label{E:rel-dual}
\pi_*(\I_{|p_1^{-1}(E_i)})^{\vee}=\pi_*{\mathcal RHom}(\I_{|p_1^{-1}(E_i)},\omega_{\pi})=\pi_*(\I^{\vee}_{|p_1^{-1}(E_i)}).
\end{equation}
Claim 2 now follows by combining \eqref{E:res2}, \eqref{E:rest-div}, \eqref{E:det-push} and \eqref{E:rel-dual}.

\vspace{0,1cm}

Now, by applying Corollary \ref{C:seesaw} and using Claim 1 and Claim 2, we get that $\P^\vee$ is isomorphic to $(\nu\times\id_{\bJ_X})^*\P$ on $\bJbar_X^{1-g}\times \bJ_X^{1-g}$ and on $\bJ_X\times \bJbar_X^{1-g}$.
Finally, using the invariance of $\P$ under the permutation $\sigma$ of the two factors, it can be checked that the above isomorphisms  glue to an isomorphism between $\P^\vee$ and $(\nu\times\id_{\bJ_X})^*\P$
on the entire $\Open$, q.e.d.

\end{proof}

\section{Proof of the main results}\label{S:proof}

The aim of this section is to prove Theorem A and Theorem B from the introduction.

Fix two fine compactified Jacobians $\J_X(\un q), \J_X(\un q')\subseteq \bJbar_X^{1-g}$ associated to two general polarizations $q$ and $q'$ on a connected reduced $k$-curve $X$ with planar singularities and arithmetic genus $g:=p_a(X)$, as in \S\ref{S:comp-Jac}. Assume throughout this section that either ${\rm char}(k)=0$ or
${\rm char}(k)>g$.
With a slight abuse of notation, we will also denote by $\ov\P$ the restriction of the Poincar\'e sheaf (see \S\ref{S:Poinc2}) to $\J_X(\un q)\times \J_X(\un q')$.

The constructions of Section \ref{S:Poinc} can be repeated for the universal family
 $\bJbar_{\X}^{1-g}\times_{\Spec R_X} \bJbar_{\X}^{1-g}\to \Spec R_X$ and they provide a maximal Cohen-Macaulay sheaf $\Pbun$
over $\bJbar_{\X}^{1-g}\times_{\Spec R_X} \bJbar_{\X}^{1-g}$, which we call the \emph{universal Poincar\'e sheaf},
whose restriction to the  fiber over the closed point $o$ of $\Spec R_X$ coincides with the Poincar\'e sheaf $\ov\P$ on $\bJbar_X^{1-g}\times \bJbar_X^{1-g}$.
With a slight abuse of notation, we will also denote by $\Pbun$ the restriction of the universal Poincar\'e sheaf
to $\J_{\X}(\un q)\times_{\Spec R_X} \J_{\X}(\un q')$, where $u:\J_{\X}(\un q)\to \Spec R_X$ (resp. $u':\J_{\X}(\un q')\to \Spec R_X$) is the universal fine compactified Jacobian with respect to the polarization $\un q$ (resp. $\un q'$) as in \S\ref{S:defo}.

Consider now the complex
\begin{equation}\label{E:psi-univ}
\Psi^{\rm un}:=Rp_{13*}\left(p_{12}^*((\Pbun)^{\vee})\otimes p_{23}^*(\Pbun)\right)\in \Dc^b(\J_{\X}(\un q)\times_{\Spec R_X} \J_{\X}(\un q)),
\end{equation}
where $p_{ij}$ denotes the projection of $\J_{\X}(\un q)\times_{\Spec R_X}\J_{\X}(\un q')\times_{\Spec R_X} \J_{\X}(\un q)$ onto the  $i$-th and $j$-th factors.

Notice that, as $\Pbun$ is flat over $\J_{\X}(\un q')$, its pull back $p_{23}^*(\Pbun)$ is flat over 
$\J_{\X}(\un q)\times_{\Spec R_X}\J_{\X}(\un q')$ with respect to the (flat) morphism $p_{12}$. Therefore the tensor product
$p_{12}^*((\Pbun)^{\vee})\otimes p_{23}^*(\Pbun)$ is quasi isomorphic to the derived tensor product
$p_{12}^*((\Pbun)^{\vee})\lotimes p_{23}^*(\Pbun)$.  
\begin{remark}\label{tensorCM}
The derived dual $$R\mathcal{H}om\left(p_{12}^*((\Pbun)^{\vee})\otimes p_{23}^*(\Pbun), \mathcal{O}_{\J_{\X}(\un q)\times_{\Spec R_X}\J_{\X}(\un q')\times_{\Spec R_X} \J_{\X}(\un q)}\right)$$ is isomorphic in $\Dc\left(\J_{\X}(\un q)\times_{\Spec R_X}\J_{\X}(\un q')\times_{\Spec R_X} \J_{\X}(\un q)\right)$ to the sheaf
$p_{12}^*(\Pbun)\otimes p_{23}^*((\Pbun)^{\vee})$. In particular $p_{12}^*((\Pbun)^{\vee})\otimes p_{23}^*(\Pbun)$
is a maximal Cohen-Macaulay  sheaf.

To prove this isomorphism \footnote{The proof of the isomorphism  would be elementary  if $\J_{\X}(\un q)\times_{\Spec R_X}\J_{\X}(\un q')\times_{\Spec R_X} \J_{\X}(\un q)$ were smooth.}, let $L^{\bullet}\rightarrow (\Pbun)^{\vee}$ be a locally free resolution.
As $p_{12}$ is flat, it induces a locally free resolution $p_{12}^{*}(L^{\bullet})\rightarrow p_{12}^{*}((\Pbun)^{\vee})$ .
Since $p_{23}^*(\Pbun)$ is flat over 
$\J_{\X}(\un q)\times_{\Spec R_X}\J_{\X}(\un q')$ with respect to $p_{12}$, tensoring with
$p_{23}^*(\Pbun)$ still gives a resolution $$p_{12}^{*}(L^{\bullet})\otimes p_{23}^*(\Pbun)
\rightarrow p_{12}^{*}((\Pbun)^{\vee})\otimes p_{23}^*(\Pbun).$$
As $p_{23}$ is flat and $\Pbun$ is maximal Cohen-Macaulay, the pull-back $p_{23}^*(\Pbun)$ 
is maximal Cohen-Macaulay  too. It follows that $p_{12}^{*}(L^{\bullet})\otimes p_{23}^*(\Pbun)$ is a complex of 
Cohen-Macaulay  sheaves. Therefore $$R\mathcal{H}om\left(p_{12}^*((\Pbun)^{\vee})\otimes p_{23}^*(\Pbun), \mathcal{O}_{\J_{\X}(\un q)\times_{\Spec R_X}\J_{\X}(\un q')\times_{\Spec R_X} \J_{\X}(\un q)}\right)$$ is isomorphic, in $\Dc\left(\J_{\X}(\un q)\times_{\Spec R_X}\J_{\X}(\un q')\times_{\Spec R_X} \J_{\X}(\un q)\right)$, to 
$$\left(p_{12}^{*}(L^{\bullet})\otimes p_{23}^*(\Pbun)\right)^{\vee}=p_{12}^{*}(L^{\bullet})^{\vee}\otimes p_{23}^*(\Pbun)^{\vee}.$$
Finally, as  $(\Pbun)^{\vee}$ is maximal Cohen-Macaulay , the complex $(\Pbun)\rightarrow (L^{\bullet})^{\vee}$ is exact and, as
$p_{12}$ is flat and $p_{23}^*(\Pbun)^{\vee}$ is flat with respect to $p_{12}$, the complex  
$$p_{12}^{*}(\Pbun)\otimes p_{23}^*(\Pbun)^{\vee}\rightarrow p_{12}^{*}(L^{\bullet})^{\vee}\otimes p_{23}^*(\Pbun)^{\vee}$$
is also exact. Hence $p_{12}^{*}(\Pbun)\otimes p_{23}^*(\Pbun)^{\vee}$ is isomorphic to $p_{12}^{*}(L^{\bullet})^{\vee}\otimes p_{23}^*(\Pbun)^{\vee}$ in the derived category
$\Dc\left(\J_{\X}(\un q)\times_{\Spec R_X}\J_{\X}(\un q')\times_{\Spec R_X} \J_{\X}(\un q)\right)$.
\end{remark}

Theorem A will descend easily from the following key result.

\begin{thm}\label{T:psi-univ}
Notation as above.
Then there is a natural isomorphism in $\Dc^b(\J_{\X}(\un q)\times_{\Spec R_X} \J_{\X}(\un q))$
\begin{equation}\label{E:key-univ}
\vartheta^{\rm un}: \Psi^{\rm un}[g]\longrightarrow \O_{\Delta^{\rm un}},
\end{equation}
where $\O_{\Delta^{\rm un}}$ is the structure sheaf of the universal diagonal $\Delta^{\rm un}\subset \J_{\X}(\un q)\times_{\Spec R_X}\J_{\X}(\un q)$.
\end{thm}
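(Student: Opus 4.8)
The plan is to deduce the isomorphism \eqref{E:key-univ} from the intermediate claim $(*)$ --- that $\Psi^{\rm un}[g]$ is a Cohen-Macaulay sheaf (concentrated in cohomological degree $0$) whose support is exactly the universal diagonal $\Delta^{\rm un}$ --- and then to promote the tautological identification of $\Psi^{\rm un}[g]$ with $\O_{\Delta^{\rm un}}$ over the generic locus to a global isomorphism. Throughout I would work relatively over $\Spec R_X$, using that $\J_{\X}(\un q)$ and $\J_{\X}(\un q')$ are smooth and flat of relative dimension $g$ over $\Spec R_X$ (Fact \ref{F:Punivfine}), so that the ambient space $\J_{\X}(\un q)\times_{\Spec R_X}\J_{\X}(\un q)$ is smooth and the diagonal $\Delta^{\rm un}$ has codimension $g$ in it.

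To prove $(*)$ I would first check that $p_{12}^*((\Pbun)^{\vee})\lotimes p_{23}^*(\Pbun)$ is a genuine sheaf: since $(\Pbun)^{\vee}$ and $\Pbun$ are maximal Cohen-Macaulay and flat over the appropriate factors (by Theorem \ref{T:Poinc} and Proposition \ref{P:dual}), no higher Tor-sheaves appear, and $\Psi^{\rm un}$ has cohomology concentrated in degrees $[0,g]$ because $p_{13}$ is projective of relative dimension $g$. The support statement is then handled fibre by fibre over $\Spec R_X$: flat base change (using flatness of $\Pbun$ over $\Spec R_X$) identifies the restriction of $\Psi^{\rm un}$ to the fibre over $s\in \Spec R_X$ with the analogous complex $\Psi_s$ built from the Poincar\'e sheaf of $\X_s$, and Proposition \ref{P:codim-psi} bounds from below the codimension, inside $\J_{\X_s}(\un q^s)\times \J_{\X_s}(\un q^s)$, of the support of $\Psi_s[g]$ away from the fibrewise diagonal by a quantity growing with $g^{\nu}(\X_s)$. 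Feeding this into the equigeneric stratification of Fact \ref{F:Diaz-Har}\eqref{F:Diaz-Harris1}, whose stratum $(\Spec R_X)^{g^{\nu}\le k}$ has codimension at least $p_a(X)-k$, a stratum-by-stratum dimension count is arranged so that any irreducible component of $\supp\Psi^{\rm un}[g]$ other than $\Delta^{\rm un}$ acquires codimension strictly bigger than $g$ in the total space, while over the smooth fibres Mukai's inversion theorem \cite{mukai} forces the support to be exactly the diagonal there. Finally, the compatibility of the universal Poincar\'e sheaf with the duality involution (the relative form of Proposition \ref{P:duality}), together with Grothendieck-Serre duality, identifies $\Psi^{\rm un}$ with a shift of its own dual; combined with the fact that its cohomology is supported in codimension at least $g$ everywhere, this forces $\Psi^{\rm un}$ to be a single Cohen-Macaulay sheaf placed in degree $g$ with pure support of codimension $g$, which by the codimension count must be $\Delta^{\rm un}$. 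This establishes $(*)$.

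To pass from $(*)$ to \eqref{E:key-univ} I would construct $\vartheta^{\rm un}$ from the evaluation pairing $p_{12}^*((\Pbun)^{\vee})\lotimes p_{23}^*(\Pbun)\to \O$ along the diagonal (adjoint to the counit of the relevant Fourier-Mukai adjunction), which yields a canonical morphism $\Psi^{\rm un}[g]\to \O_{\Delta^{\rm un}}$. Over the open locus $U\subseteq \Spec R_X$ of Fact \ref{F:Diaz-Har}\eqref{F:Diaz-Harris3}, whose complement has codimension at least two and over which the fibres are smooth or one-nodal, this morphism is an isomorphism by the already-known cases: Mukai's theorem on the smooth fibres and a direct computation at a single node (integral as in Arinkin \cite{arin2}, or of compact type otherwise), applied relatively. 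Since both $\Psi^{\rm un}[g]$ (by $(*)$) and $\O_{\Delta^{\rm un}}$ (as $\Delta^{\rm un}\cong \J_{\X}(\un q)$ is smooth) are Cohen-Macaulay sheaves on $\Delta^{\rm un}$, and they agree via $\vartheta^{\rm un}$ on the preimage of $U$, whose complement in $\Delta^{\rm un}$ has codimension at least two by flatness of $u$, the map $\vartheta^{\rm un}$ is an isomorphism everywhere by the Cohen-Macaulay extension principle (\cite[Thm. 5.10.5]{EGAIV2}), exactly as in Remark \ref{R:CM-ext}.

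The main obstacle is the support computation inside $(*)$: ruling out spurious components of $\supp\Psi^{\rm un}[g]$ lying over the deeper, more singular strata of $\Spec R_X$. This is precisely where the two independent estimates must be balanced --- the fibrewise codimension bound of Proposition \ref{P:codim-psi} and the codimension of the equigeneric strata of Fact \ref{F:Diaz-Har} --- so that their sum strictly exceeds $g$ away from the diagonal. Making the numerics close, and doing so for the \emph{reducible, non-integral} fibres that never occur in Arinkin's integral setting, is the delicate point; once $(*)$ holds, the concentration in a single degree, the Cohen-Macaulay property, and the final extension are comparatively formal.
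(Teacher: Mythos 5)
Your proposal is correct and follows essentially the same route as the paper: the canonical morphism $\vartheta^{\rm un}$ (the paper builds it via base change along the diagonal, truncation to top degree, and relative duality $R^g\wt p_*\O\cong\O_{\Delta^{\rm un}}$, which is the same counit you describe), the isomorphism over the smooth/one-nodal locus $U$ by Mukai, Arinkin, and the separating-node product decomposition, the balancing of Proposition \ref{P:codim-psi} against the equigeneric stratification of Fact \ref{F:Diaz-Har} to pin down the support, the self-duality of $\Psi^{\rm un}$ under relative duality to force concentration in one degree and the Cohen--Macaulay property (the paper invokes Arinkin's Lemma 7.6 for exactly the implication you sketch), and the final extension across a codimension-two locus via \cite[Thm. 5.10.5]{EGAIV2}. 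The only cosmetic difference is that you prove the statement $(*)$ before exhibiting the morphism, whereas the paper interleaves the two; the mathematical content is identical.
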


Before proving Theorem \ref{T:psi-univ}, we need to bound the dimension of the support of the following complex
\begin{equation}\label{E:psi}
\Psi:=Rp_{13*}\left(p_{12}^*((\ov \P)^{\vee})\otimes p_{23}^*(\ov\P)\right)\in \Dc^b(\J_{X}(\un q)\times \J_{X}(\un q)).
\end{equation}


\begin{prop}\label{P:codim-psi}
Same assumptions as in Theorem \ref{T:psi-univ}. Then the complex $\Psi$ of \eqref{E:psi} satisfies
$$\codim\left( \supp(\Psi)\right)\geq g^{\nu}(X),$$
with strict inequality if $X$ is irreducible and singular.
\end{prop}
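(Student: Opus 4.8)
The plan is to identify the support of $\Psi$ fibrewise and then estimate its dimension on the dense line-bundle locus, transferring the bound to the whole support afterwards. By flat base change, using that $\ov\P$ is flat over the second factor (Theorem \ref{T:Poinc}\eqref{T:Poinc2}) and that $\ov\P^\vee$ is flat over both factors (Proposition \ref{P:dual}\eqref{P:dual4}), the derived fibre of $\Psi$ at a point $(I_1,I_3)\in \J_X(\un q)\times \J_X(\un q)$ is
$$
R\Gamma\bigl(\J_X(\un q'),\ (\ov\P_{I_1})^\vee\lotimes \ov\P_{I_3}\bigr),
$$
where I have used Proposition \ref{P:dual}\eqref{P:dual4} to write $(\ov\P^\vee)_{I_1}=(\ov\P_{I_1})^\vee$. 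Hence $\supp(\Psi)$ is the closed locus where this hypercohomology is non-zero.

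First I would restrict to the open dense subset $J_X(\un q)\times J_X(\un q)$ of pairs of line bundles, whose complement has codimension at least one by Fact \ref{F:propJbig}\eqref{F:propJbig2}. On it, Proposition \ref{P:mult} and Corollary \ref{C:transla}, combined with the isomorphism $\beta_{\un q'}\colon J(X)\xrightarrow{\ \sim\ }\Pic^o(\J_X(\un q'))$, let me rewrite the fibre at $(L_1,L_3)$ as $R\Gamma(\J_X(\un q'),\P_N)$ with $N:=L_3\otimes L_1^{-1}$ and $\P_N:=\beta_{\un q'}(N)$. The change of variables $(L_1,L_3)\mapsto(L_1,N)$ then identifies $\supp(\Psi)$, restricted to each pair of connected components of the line-bundle locus, with $J(X)\times Z$, where
$$
Z:=\{N\in J(X)\ :\ R\Gamma(\J_X(\un q'),\P_N)\neq 0\}.
$$
Consequently the codimension of $\supp(\Psi)$ along this locus equals $p_a(X)-\dim Z$, and the assertion becomes $\dim Z\leq p_a(X)-g^\nu(X)$.

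The crux, and the step I expect to be the main obstacle, is a generic-vanishing statement: $Z$ is contained in the toric part $T:=\ker(\bar\beta)$ of the semiabelian group $J(X)$, where $\bar\beta\colon J(X)\twoheadrightarrow A^\vee$ is the projection onto the dual of the abelian part $A\cong\prod_i\Pic^0(X_i^\nu)$, an abelian variety of dimension $g^\nu(X)$. Since $\dim T=p_a(X)-g^\nu(X)$, this yields exactly the required bound on $\dim Z$. To prove it I would use the Albanese morphism $a\colon\J_X(\un q')\to A$, under which $a^*\Pic^0(A)$ realizes the abelian direction of $\Pic^o(\J_X(\un q'))$ via $\beta_{\un q'}$: for $N$ with $Q:=\bar\beta(N)\neq 0$ one writes $\P_N\cong a^*Q\otimes\P_{N_0}$ with $N_0\in T$, so that the projection formula reduces $R\Gamma(\J_X(\un q'),\P_N)$ to $R\Gamma(A,\ Ra_*(\P_{N_0})\otimes Q)$. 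The hard part is to show this vanishes for every non-trivial $Q\in\Pic^0(A)$: on the open semiabelian subset $J_X(\un q')$ this is the classical vanishing of the cohomology of a non-trivial character of a semiabelian variety, but one must control $Ra_*$ across the non-locally-free boundary of the singular variety $\J_X(\un q')$, which is the genuinely delicate point.

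Finally I would transfer the estimate from the line-bundle locus to all of $\supp(\Psi)$. As $\ov\P$ and $\ov\P^\vee$ are flat over the two projections and $\supp(\Psi)$ is closed and invariant under the diagonal action of $J(X)$ (Proposition \ref{P:mult}), no component of the non-vanishing locus supported on the boundary can exceed the expected dimension; making this precise amounts to running the same Albanese and upper-semicontinuity argument in the flat families parametrizing the boundary strata. For the strict inequality when $X$ is irreducible and singular, the middle factor $\J_X(\un q')$ is the compactified Jacobian of an integral curve, so Arinkin's autoduality theorem \cite{arin2} forces $\supp(\Psi)$ to be the diagonal, of codimension $p_a(X)>g^\nu(X)$; equivalently, in the notation above $Z=\{\O_X\}$ is a single point, so $\dim Z=0<p_a(X)-g^\nu(X)$.
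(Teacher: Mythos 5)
Your reduction of $\supp(\Psi)$ to a fibrewise non-vanishing locus is the same starting point as the paper, but from there the argument has two genuine gaps. First, the step you yourself flag as ``the genuinely delicate point'' --- the generic-vanishing statement $Z\subseteq T$, i.e.\ that $R\Gamma(\J_X(\un q'),\P_N)=0$ whenever $N$ has non-trivial abelian part --- is exactly the substance of the proposition on the line-bundle locus, and you do not prove it. The paper gets a (stronger) statement by a different mechanism: it shows (Claim 1 of its proof, following Arinkin's Prop.~7.2 via Proposition \ref{P:mult}) that $(I_1,I_2)\in\supp(\Psi)$ forces $\P_{I_1}=\P_{I_2}$ as line bundles on $\bJ_X$, with no Albanese map or control of $Ra_*$ across the boundary of $\J_X(\un q')$ needed.

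Second, and more seriously, your transfer of the bound from $J_X(\un q)\times J_X(\un q)$ to all of $\supp(\Psi)$ cannot work as described. The complement of the line-bundle locus has codimension only one, so a component of $\supp(\Psi)$ lying entirely in the boundary is not constrained by any estimate on the open part; and since $\supp(\Psi)$ is closed, upper semicontinuity only allows the support to grow on the boundary, not shrink. Invariance under the diagonal $J(X)$-action gives orbits of dimension at most $p_a(X)$ and hence no useful codimension bound either. The paper's device for the boundary is its Claim 2: from $\P_{I_1}=\P_{I_2}$ one deduces, by pulling back along twisted Abel maps $A_L:X\to\bJbar_X$ (using $A_L^*(\P_I)=I_{|X_{\rm sm}}$), that $(I_1)_{|X_{\rm sm}}=(I_2)_{|X_{\rm sm}}$ --- a constraint meaningful for arbitrary torsion-free sheaves, not just line bundles. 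The dimension count is then run on the smooth surjective cover $(L,I_1,I_2)\mapsto(L\otimes I_1,I_2)$, whose fibres over \emph{every} point of $\J_X(\un q)\times\J_X(\un q)$ land in a translate of $\ker(J(X)\to\Pic(X_{\rm sm}))$, of codimension $\geq g^{\nu}(X)$ in $J(X)$. Without an analogue of this pointwise constraint on the boundary, your argument does not establish the stated codimension bound; the strict-inequality claim for irreducible singular $X$ has the same problem, since asserting $\supp(\Psi)=\Delta$ outright presupposes boundary control that is only obtained later (in Theorem \ref{T:psi-univ}) using this very proposition.
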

\begin{proof}
For any $I\in \bJbar_{X}^{1-g}$, set $\ov\P_{I}:=\ov\P_{|\{I\}\times \bJbar_{X}^{1-g}}$ and $\P_I:=\P_{|\{I\}\times \bJ_X^{1-g}}$.
Observe that, as the tensor product $p_{12}^*((\Pbun)^{\vee})\otimes p_{23}^*(\Pbun)$ is isomorphic to the derived tensor product
$p_{12}^*((\Pbun)^{\vee})\lotimes p_{23}^*(\Pbun)$, for any $(I_1,I_2)\in \J_X(\un q)\times \J_X(\un q)$ the derived restriction of $p_{12}^*((\Pbun)^{\vee})\otimes p_{23}^*(\Pbun)$ to 
$p_{13}^{-1}((I_1,I_2))$ is  isomorphic to $\ov\P_{I_1}^{\vee}\lotimes \ov\P_{I_2}$ in $\Dc^b(\J_{X}(\un q)\times \J_{X}(\un q))$. Therefore, using base change and Proposition \ref{P:dual}\eqref{P:dual4}, we have
\begin{equation}\label{E:supp-set}
\supp(\Psi):=\{(I_1,I_2)\in \J_X(\un q)\times \J_X(\un q) \: : \: \HH^i(\J_X(\un q'), \ov\P_{I_1}^{\vee}\lotimes \ov\P_{I_2})\neq 0 \text{ for some } i \in \bbZ \}.
\end{equation}

\un{Claim 1:} If $(I_1,I_2)\in \supp(\Psi)$ then $\P_{I_1}=\P_{I_2}$.

The proof of this Claim follows the same lines of the proof of \cite[Prop. 7.2]{arin2} using Proposition \ref{P:mult} (see also \cite[Prop. 6.1]{MRV2} and \cite[Prop. 1]{arin1}) and therefore is left to the reader.

\un{Claim 2:} If $(I_1,I_2)\in \supp(\Psi)$ then $(I_1)_{|X_{\rm sm}}=(I_2)_{|X_{\rm sm}}$.

Consider the L-twisted Abel map $A_L:X\to \bJbar_X$ for some $L\in \Pic(X)$, see \cite[\S 6.1]{MRV1}. Since $A_L(X_{\rm sm})\subseteq \bJ_X$ and $A_L^*(\P_I)=I_{|X_{\rm sm}}$ for any
$I\in \bJbar_X$ (see \cite[Prop. 5.6]{MRV2}), Claim 2 follows from Claim 1.

\vspace{0,1cm}

Consider now the map
\begin{equation*}
\begin{aligned}
(\mu\times \id):J(X)\times \J_X(\un q)\times \J_X(\un q)& \longrightarrow \J_X(\un q)\times \J_X(\un q),\\
(L,I_1,I_2)& \mapsto (L\otimes I_1,I_2),\\
\end{aligned}
\end{equation*}
and set $\Phi:=(\mu\times \id)^{-1}(\Psi)$. Since $\mu\times \id$ is a smooth and surjective map, hence with equidimensional fibers, it is enough to prove that
\begin{equation}\label{E:Phi-codim}
\codim(\supp(\Phi))\geq g^{\nu}(X) \: \: \text{ with strict inequality if }  X \text { is irreducible and singular. }
\end{equation}
Consider now the projection $p:\Phi\to \J_X(\un q)\times \J_X(\un q)$ on the last two factors. Using Claim 2, the fiber of $p$ over the point $(I_1,I_2)\in \J_X(\un q)\times \J_X(\un q)$ is contained in
the locus of $J(X)$ consisting of those line bundles $L\in J(X)$ such that $L_{|X_{\rm sm}}=(I_1)_{|X_{\rm sm}}^{-1}\otimes (I_2)_{|X_{\rm sm}}$. Arguing as in \cite[Cor. 6.3]{MRV2}, it follows that
\begin{equation}\label{E:fib-Phi1}
\codim(p^{-1}(I_1,I_2))\geq g^{\nu}(X) \: \: \text{ for any }Ê\: (I_1,I_2)\in \J_X(\un q)\times \J_X(\un q).
\end{equation}
Moreover, if $X$ is irreducible and $(I_1,I_2)\in J_X(\un q)\times J_X(\un q)$, then we can apply \cite[Thm. B(i)]{arin1}, together with Propositions \ref{P:duality} and \ref{P:mult}, in order to deduce that
\begin{equation*}
p^{-1}(I_1,I_2)=\{L\in J(X)\: :\:  H^i(\J_X(\un q'), \P_{(L\otimes I_1)^{-1} \otimes I_2})\neq 0 \:  \text{ for some }\in \bbZ\}=\{I_1^{-1}\otimes I_2\}.
\end{equation*}
Therefore, if $X$ is irreducible and singular, it holds that
\begin{equation}\label{E:fib-Phi2}
\codim(p^{-1}(I_1,I_2))=p_a(X)>g^{\nu}(X)  \: \: \text{ for any }Ê\: (I_1,I_2)\in J_X(\un q)\times J_X(\un q).
\end{equation}
Combining \eqref{E:fib-Phi1} and \eqref{E:fib-Phi2}, we get   condition \eqref{E:Phi-codim}, q.e.d.

\end{proof}


We can now give the proof of Theorem \ref{T:psi-univ}.

\begin{proof}[Proof of Theorem \ref{T:psi-univ}]

We will first explain how the natural morphism $\vartheta^{\rm un}$ is defined.
Consider the Cartesian diagram
\begin{equation}\label{E:diag-eta}
\xymatrix{
\wt{\Delta^{\rm un}}\ar@{^{(}->}[r]^(.16){\wt{i}}\ar[d]^{\wt{p}}\ar@{}[dr]|{\square} &
\ov J_{\X}(\un q)\times_{\Spec R_X} \ov J_{\X}(\un q')
\times_{\Spec R_X} \ov J_{\X}(\un q) \ar[d]^{p_{13}} \\
\Delta^{\rm un} \ar@{^{(}->}[r]^(.3)i & \ov J_{\X}(\un q)\times_{\Spec R_X} \ov J_{\X}(\un q)
}
\end{equation}
and notice that the morphism $\wt{p}$ can be identified with the projection onto the first factor
$$p_{1}:\ov J_{\X}(\un q)\times_{\Spec R_X} \ov J_{\X}(\un q')\rightarrow \ov J_{\X}(\un q).$$
By applying base change (see e.g. \cite{BBH08},
Prop. A.85) to the diagram \eqref{E:diag-eta},  we get a morphism
\begin{equation}\label{E:basechan}
 Li^* \Psi^{\rm un}=Li^* Rp_{13*}\left(p_{12}^*((\Pbun)^{\vee})\lotimes p_{23}^*(\Pbun)\right) \longrightarrow R\wt{p}_*L\wt{i}^*\left(p_{12}^*((\Pbun)^{\vee})\lotimes p_{23}^*(\Pbun)\right).
\end{equation}
By a standard spectral sequence argument, $L\wt{i}^*\left(p_{12}^*((\Pbun)^{\vee})\lotimes p_{23}^*(\Pbun)\right)$ is isomorphic to
$(\Pbun)^{\vee}\lotimes \Pbun$ in  $\Dc(\wt{\Delta^{\rm un}})$, hence it admits a natural morphism to its top degree cohomology (i.e. the usual tensor product $(\Pbun)^{\vee}\otimes \Pbun$) and to the structure sheaf $\O_{\wt{\Delta^{\rm un}}}$.
Composing with $R\wt{p}_*$ finally gives a morphism

\begin{equation}\label{E:mor-tens}
 R\wt{p}_*L\wt{i}^*\left(p_{12}^*((\Pbun)^{\vee})\lotimes p_{23}^*(\Pbun)\right)\simeq R\wt{p}_*\left((\Pbun)^{\vee}\lotimes \Pbun\right)\longrightarrow R\wt{p}_* \left((\Pbun)^{\vee}\otimes \Pbun\right) \longrightarrow
  R\wt{p}_*(\O_{\wt{\Delta^{\rm un}}}).
\end{equation}
Since the complex of sheaves $ R\wt{p}_*(\O_{\wt{\Delta^{\rm un}}})$ is concentrated in cohomological degrees
from $0$ to $g$, we get a morphism of complexes of sheaves
\begin{equation}\label{E:mor-to-g}
 R\wt{p}_*(\O_{\wt{\Delta^{\rm un}}})\longrightarrow  R^g\wt{p}_*(\O_{\wt{\Delta^{\rm un}}})[-g].
\end{equation}
Moreover, since the morphism $\wt{p}$ is proper of relative dimension $g$, with trivial relative dualizing sheaf and geometrically connected fibers (by Facts  \ref{F:Punivfine} and  \ref{F:prop-Jac}),
then the relative duality  applied to $\wt{p}$ gives that (see \cite[Cor. 11.2(g)]{Har}) :
\begin{equation}\label{E:iso-deg-g}
R^g \wt{p}_*(\O_{\wt{\Delta^{\rm un}}})\cong  \O_{\Delta^{\rm un}}.
\end{equation}
By composing the morphisms \eqref{E:basechan}, \eqref{E:mor-tens},  \eqref{E:mor-to-g} and using the isomorphism \eqref{E:iso-deg-g},
we get a morphism
\begin{equation}\label{E:basech2}
Li^* \Psi^{\rm un}\longrightarrow \O_{\Delta^{\rm un}}[-g].
\end{equation}
Since $i_*=(Ri_{*})$ is right adjoint to $Li^*$, the morphism \eqref{E:basech2}, shifted by $[g]$, gives rise to the morphism $\vartheta^{\rm un}$.

\vspace{0.1cm}

In order to show that $\vartheta^{\rm un}$ is an isomorphism, we divide the proof
 into several steps, which we collect under the name of Claims.

The first claim says that $\vartheta^{\rm un}$ is 
an isomorphism on an interesting open subset. 
More precisely, let $(\Spec R_X)_{\rm sm}$ be the open subset of $\Spec R_X$ consisting 
of all the (schematic) points $s\in \Spec R_X$ such that the geometric fiber $\X_{\ov s}$ of the universal 
family $\pi:\X\to \Spec R_X$  is smooth.
\vspace{0,1cm}

\un{CLAIM 1:} The morphism $\vartheta^{\rm un}$ is an isomorphism over the open subset $(u\times u)^{-1}(\Spec R_X)_{\rm sm}\subseteq \ov J_{\X}(\un q)\times_{\Spec R_X} \ov J_{\X}(\un q)$.

It is enough to prove that $\vartheta^{\rm un}$ is an isomorphism when restricted to $(u\times u)^{-1}(s)$ for any $s\in (\Spec R_X)_{\rm sm}$. 
This is a classical result due to Mukai (see \cite[Thm. 2.2]{mukai}), it may also be seen as 
a particular case of \cite[Prop. 7.1]{arin2}, which holds more generally for any irreducible curve.

\vspace{0,2cm}

\un{CLAIM 2:}  $\Psi^{\rm un}[g]$ is a Cohen-Macaulay sheaf of codimension $g$.

Let us first prove that
\begin{equation}\label{E:codim-psi}
\codim( \supp (\Psi^{\rm un}))=g.
\end{equation}
First of all, Claim 1 gives that $\codim(\supp  (\Psi^{\rm un}))\leq g$. In order to prove the reverse inequality, we stratify
the scheme $\Spec R_X$  into locally closed subsets according to the geometric genus of the geometric fibers of the universal family $\X\to \Spec R_X$:
$$(\Spec R_X)^{g^{\nu}=l}:=\{s\in \Spec R_X\: :\: g^{\nu}(\X_{\ov s})=l\}, $$
for any $g^{\nu}(X)\leq l\leq p_a(X)=g$. Fact \ref{F:Diaz-Har}\eqref{F:Diaz-Harris1} gives that $\codim (\Spec R_X)^{g^{\nu}=l}\geq g-l$.
On the other hand, on the fibers of $u\times u$ over
$(\Spec R_X)^{g^{\nu}=l}$, the sheaf $\Psi^{\rm un}$ has support of codimension at least $l$ by Proposition
\ref{P:codim-psi}. Therefore, we get
\begin{equation}\label{E:supp-strata}
\codim (\supp (\Psi^{\rm un}) \cap (u\times u)^{-1}((\Spec R_X)^{g^{\nu}=l}))\geq g \text{ for any }Šg^{\nu}(X)\leq l\leq g.
\end{equation}
Since the locally closed subsets $(\Spec R_X)^{g^{\nu}=l}$ form a stratification of $\Spec R_X$, we deduce that
$g\leq \codim( \supp (\Psi^{\rm un}))$, which concludes
the proof of \eqref{E:codim-psi}.

Observe next that, since $p_{23}$ has relative dimension $g$ and $p_{12}^*((\ov \P)^{\vee})\lotimes p_{23}^*(\ov\P)$ is concentrated in non-positive degrees,
we have that
\begin{equation}\label{E:coho-psi}
H^i(\Psi^{\rm un}[g])=0 \text{ for }Êi>0.
\end{equation}
Let now $\bbD$ be the dualizing functor of $\J_{\X}(\un q)\times_{\Spec R_X}\J_{\X}(\un q)$. Applying the relative duality (see \cite[Chap. VII.3]{Har2}) to the morphism $p_{13}$, which is
projective and flat of relative dimension $g$ by Fact  \ref{F:Punivfine}\eqref{F:Punivfine2}  and with trivial dualizing sheaf by Fact \ref{F:prop-Jac}\eqref{F:prop-Jac1},  we get
that
\begin{equation}\label{E:dual-psi}
\bbD(\Psi^{\rm un}[g])=Rp_{13*}\left(
R\mathcal{H}om\left(p_{12}^*((\Pbun)^{\vee})\otimes p_{23}^*(\Pbun), \mathcal{O}_{\J_{\X}(\un q)\times_{\Spec R_X}\J_{\X}(\un q')\times_{\Spec R_X} \J_{\X}(\un q)}\right)
\right).
\end{equation}
By Remark \ref{tensorCM}, the second term of \eqref{E:dual-psi} equals 
\begin{equation}\label{E:sdoppiamento}
Rp_{13*}\left(\left[p_{12}^*((\Pbun)^{\vee})\otimes p_{23}^*(\Pbun)\right]^{\vee}\right)=
Rp_{13*}\left(p_{12}^*(\Pbun)\otimes p_{23}^*(\Pbun)^{\vee}\right)=
\sigma^*(\Psi^{\rm un})
\end{equation}
where $\sigma$ is the automorphism of $\J_{\X}(\un q)\times_{\Spec R_X} \J_{\X}(\un q)$ that exchanges the two factors.
From \eqref{E:coho-psi}, \eqref{E:dual-psi} and \eqref{E:sdoppiamento} it follows that
\begin{equation}\label{E:coho-dual}
H^i(\bbD(\Psi^{\rm un}[g]))=0 \text{ for }Êi>g.
\end{equation}
Since $\Psi^{\rm un}[g]$ satisfies  \eqref{E:codim-psi}, \eqref{E:coho-psi}, \eqref{E:coho-dual}, Lemma 7.6 of  \cite{arin2} gives that $\Psi^{\rm un}[g]$ is a Cohen-Macaulay sheaf of codimension $g$.\footnote{An expanded proof of Cohen-Macaulay of
$\Psi^{\rm un}[g]$ can be given by copying the proofs of Claim 3 and Claim 4 in the proof of Theorem 8.1 in \cite{MRV1}.}

\vspace{0,2cm}

\un{CLAIM 3:} We have a set-theoretic equality $\supp(\Psi^{\rm un}[g])=\Delta^{\rm un}$.

Let $Z$ be an irreducible component of $\supp(\Psi^{\rm un}[g])$. Since $\Psi^{\rm un}[g]$ is a Cohen-Macaulay sheaf of codimension $g$, then by \cite[Thm. 6.5(iii), Thm. 17.3(i)]{Mat} we get that
\begin{equation}\label{E:comp1}
\codim Z=g.
\end{equation}
Let $\eta$ be the generic point of $(u\times u)(Z)$. Clearly, $(u\times u)(Z)$ is contained in $(\Spec R_X)^{p_a^{\nu}\leq p_a^{\nu}(\X_{\ov \eta})}$, from which we deduce,
using Fact \ref{F:Diaz-Har}\eqref{F:Diaz-Harris1}, that
\begin{equation}\label{E:comp2}
\codim (u\times u)(Z)\geq \codim (\Spec R_X)^{p_a^{\nu}\leq p_a^{\nu}(\X_{\ov \eta})}= g-p_a^{\nu}(\X_{\ov \eta}).
\end{equation}
Moreover, denoting by $\Psi_{\ov \eta}$ the analogue of $\Psi$ for the curve $\X_{\ov \eta}$, Proposition \ref{P:codim-psi} gives that
\begin{equation}\label{E:comp3}
\codim \supp(\Psi_{\ov \eta})\geq g^{\nu}(\X_{\ov \eta}).
\end{equation}
Putting together \eqref{E:comp1}, \eqref{E:comp2} and \eqref{E:comp3}, we compute that
\begin{equation*}
g=\codim Z=\codim (u\times u)(Z)+ \codim \supp(\Psi_{\ov \eta})\geq g-p_a^{\nu}(\X_{\ov \eta})+g^{\nu}(\X_{\ov \eta})\geq g,
\end{equation*}
which implies that $p_a^{\nu}(\X_{\ov \eta})=g^{\nu}(\X_{\ov \eta})$ (i.e., $\X_{\ov \eta}$ is irreducible) and that equality holds in \eqref{E:comp3}. By Proposition  \ref{P:codim-psi}, this can happen only if $\X_{\ov \eta}$ is
smooth. Then Claim 1 implies that  $Z=\Delta^{\rm un}$, q.e.d.





\vspace{0,2cm}

\un{CLAIM 4:} We have a scheme-theoretic equality $\supp(\Psi^{\rm un}[g])=\Delta^{\rm un}$.

Since the subscheme $\Delta^{\rm un}$ is reduced, Claim 3 gives the inclusion of subschemes $\Delta^{\rm un}\subseteq  \supp(\Psi^{\rm un}[g])$.
Moreover, Claim 1 says that this inclusion is generically an equality; in particular $\supp(\Psi^{\rm un}[g])$ is generically reduced.
Moreover, since $\Psi^{\rm un}[g]$ is a Cohen-Macaulay sheaf by Claim 2, we get that $\supp(\Psi^{\rm un}[g])$ is reduced by \cite[Lemma 8.2]{MRV2}.
Therefore, we must have the equality of subschemes $\Psi^{\rm un}[g]=\Delta^{\rm un}$.

\vspace{0,2cm}

We can now finish the proof of Theorem \ref{T:psi-univ}. Combining Claims 1,  2 and 4, we get that the sheaf $\Psi^{\rm un}[g]$ is a Cohen-Macaulay sheaf supported (schematically) on $\Delta^{\rm un}$, hence  
$$\vartheta^{\rm un}:\Psi^{\rm un}[g]\rightarrow \O_{\Delta^{\rm un}}$$
is a morphism of sheaves supported (schematically) on $\Delta^{\rm un}$, therefore it is an isomorphism if and only if $i^{*}\vartheta^{\rm un}$ is an isomorphism.

The morphism $i^{*}\vartheta^{\rm un}$ is, by definition, the morphism induced on degree-$g$ cohomology groups (i.e. the top cohomology groups) by the composition 
$$
\xymatrix{
Li^{*}
\Psi^{\rm un}\ar[r]^(.4){a_{1}\;\;\;\;\;\;} 
& R\wt{p}_*((\Pbun)^{\vee}\lotimes \Pbun) 
\ar[r]^{a_2} & R\wt{p}_*((\Pbun)^{\vee}\otimes \Pbun)
\ar[r]^(.6){a_3} &  R\wt{p}_*(\O_{\wt{\Delta^{\rm un}}}),
}
$$
where $a_{1}$ is the base change morphism of \eqref{E:basechan}
and $a_{2}$ and $a_{3}$  are the morphisms appearing in \eqref{E:mor-tens}.
Using the isomorphism
\eqref{E:iso-deg-g}, denoting by $H^{g}(a_{i})$ the morphisms induced by $a_{i}$ on the $g$-th cohomology sheaves, it remains to show that $H^{g}(a_{i})$ are isomorphisms.

The morphism $H^{g}(a_{1})$ is an isomorphism because $g$ is the relative dimension of $p_{13}$, hence $a_{1}$ is a top degree base change map. The morphism $H^{g}(a_{2})$ is an isomorphism by a spectral sequence argument, because the tensor product $(\Pbun)^{\vee}\otimes \Pbun$ is the degree-$0$
and top cohomology sheaf of $(\Pbun)^{\vee}\lotimes \Pbun$ and $g$ is the relative dimension of the flat morphism $\wt{p}$.

Finally, $H^{g}(a_{3})$ is an isomorphism because the kernel and the cokernel of 
$(\Pbun)^{\vee}\otimes \Pbun \rightarrow\O_{\wt{\Delta^{\rm un}}}$ are supported on  the locus where 
$\Pbun$ is not locally free , i.e. 
in the fiber product over $\Spec R_X$ of the singular loci of 
$\ov J_{\X}(\un q)$ and  $\ov J_{\X}(\un q')$. As this locus intersects each fibers of $\wt{p}$ in dimension at most $g-2$,
the morphism $(\Pbun)^{\vee}\otimes \Pbun \rightarrow\O_{\wt{\Delta^{\rm un}}}$ induces the desired isomorphism  
$R^g \wt{p}_*((\Pbun)^{\vee}\otimes \Pbun)\simeq R^g \wt{p}_*(\O_{\wt{\Delta^{\rm un}}}).$


\end{proof}

By passing to the central fiber, Theorem \ref{T:psi-univ} implies  the following result which is a generalization of the result of S. Mukai for Jacobians of smooth curves (see \cite[Thm. 2.2]{mukai})
and of D. Arinkin for compactified Jacobians of irreducible curves (see \cite[Prop. 7.1]{arin2}).

\begin{cor}\label{C:psi}
Let $X$ be a connected and reduced curve with planar singularities and arithmetic genus $g:=p_a(X)$ and let $\un q$ and $\un q'$ be two general polarizations on $X$. of total degree $1-g$. 
Assume that either ${\rm char}(k)=0$ or  ${\rm char}(k)>g$. 
Then there is a  natural isomorphism in $\Dc^b(\J_{X}(\un q)\times \J_{X}(\un q))$
\begin{equation}\label{E:key}
\vartheta: \Psi[g] \longrightarrow \O_{\Delta},
\end{equation}
where $\O_{\Delta}$ is the structure sheaf of the diagonal $\Delta \subset \J_{X}(\un q)\times \J_{X}(\un q)$.
\end{cor}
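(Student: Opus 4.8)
The plan is to obtain $\vartheta$ simply by restricting the universal isomorphism $\vartheta^{\rm un}$ of Theorem \ref{T:psi-univ} to the central fibre of $\Spec R_X$. Write $c:\Spec k=[\m_X]\hookrightarrow \Spec R_X$ for the inclusion of the closed point, and let
\[
i_0:\J_X(\un q)\times \J_X(\un q)\hookrightarrow \J_\X(\un q)\times_{\Spec R_X}\J_\X(\un q)
\]
be the resulting closed immersion of the central fibre, with the analogous immersion $j_0$ on the triple fibre products. Since $\vartheta^{\rm un}$ is an isomorphism in $\Dc^b$ and $Li_0^*$ is a triangulated functor, $Li_0^*\vartheta^{\rm un}$ is again an isomorphism; so the whole task reduces to identifying $Li_0^*\Psi^{\rm un}[g]$ with $\Psi[g]$ and $Li_0^*\O_{\Delta^{\rm un}}$ with $\O_{\Delta}$, compatibly with the construction of $\vartheta^{\rm un}$ in \eqref{E:basechan}--\eqref{E:iso-deg-g}.

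First I would handle the target. Via either projection, the universal diagonal $\Delta^{\rm un}$ is isomorphic to $\J_\X(\un q)$, hence it is flat over $\Spec R_X$ by Fact \ref{F:Punivfine}\eqref{F:Punivfine2}, and its central fibre is $\Delta$. Flatness over $R_X$ makes $\O_{\Delta^{\rm un}}$ Tor-independent from $c$, so the derived restriction agrees with the ordinary one and $Li_0^*\O_{\Delta^{\rm un}}=i_0^*\O_{\Delta^{\rm un}}=\O_{\Delta}$. Next, for the source, I would move $Li_0^*$ through the pushforward: the projection $p_{13}$ is the base change of $\J_\X(\un q')\to\Spec R_X$, hence flat by Fact \ref{F:Punivfine}\eqref{F:Punivfine2}, and proper; therefore the square relating the triple and double products to their central fibres is Tor-independent, and base change (as in \cite[Rmk. 3.33]{Huy}) gives
\[
Li_0^*\Psi^{\rm un}\;\cong\; Rp_{13*}\bigl(Lj_0^*(p_{12}^*((\Pbun)^\vee)\lotimes p_{23}^*\Pbun)\bigr).
\]
Because $Lj_0^*$ is symmetric monoidal it distributes over $\lotimes$, and using $p_{12}\circ j_0=i_0^{12}\circ p_{12}^0$ (and likewise for $p_{23}$) the problem reduces to computing the derived restrictions of $\Pbun$ and $(\Pbun)^\vee$ to the central fibre. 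The universal analogue of Theorem \ref{T:Poinc}\eqref{T:Poinc2} shows $\Pbun$ is flat over each factor $\bJbar_\X^0$, hence over $\Spec R_X$, so $Li_0^*\Pbun=\ov\P$; and since $R_X$ is regular (Fact \ref{F:smooth-RX}) the immersion $i_0$ is a regular immersion with trivial relative dualizing sheaf (up to a global shift), so $Li_0^*$ commutes with the duality functor and $Li_0^*((\Pbun)^\vee)=(\ov\P)^\vee$. Assembling these identifications yields $Li_0^*\Psi^{\rm un}[g]\cong\Psi[g]$.

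The delicate point, which I expect to be the main obstacle, is precisely this compatibility of the duality functor with restriction to the central fibre: that forming $(\Pbun)^\vee$ and then restricting produces the same maximal Cohen--Macaulay sheaf as restricting and then dualizing, together with the parallel statement for the relative-duality isomorphism \eqref{E:iso-deg-g} entering the definition of $\vartheta^{\rm un}$. Both rest on $R_X$ being regular and on the flatness over $\Spec R_X$ of the universal Jacobians, of their diagonal, and of the universal Poincar\'e sheaf. Granting these, the naturality of the construction of $\vartheta^{\rm un}$ guarantees that its derived restriction along $i_0$ is exactly the morphism $\vartheta$ built from \eqref{E:basechan}--\eqref{E:iso-deg-g} on the central fibre, and since $Li_0^*\vartheta^{\rm un}$ is an isomorphism, so is $\vartheta$, which completes the proof.
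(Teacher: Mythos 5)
Your strategy is genuinely different from the paper's at the decisive step. The paper does not attempt to prove that the derived restriction of $\Psi^{\rm un}$ to the central fibre equals $\Psi$ by verifying Tor-independence all the way down. Instead it takes the (always defined) base change morphism $b:\Psi^{\rm un}[g]_{|\J_{X}(\un q)\times \J_{X}(\un q)}\to \Psi[g]$ and shows it is an isomorphism by a support argument: $b$ induces an isomorphism on $\calH^0$ because base change is an isomorphism in top cohomological degree; the set-theoretic support of $\Psi[g]$ is contained in $\Delta$, which has codimension $g$, so \cite[Prop. 2.26]{Muk2} forces $\Psi$ to be concentrated in degree $g$; hence both sides of $b$ are sheaves and $b=\calH^0(b)$ is an isomorphism. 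This buys the authors the right not to develop the relative (over $\Spec R_X$) analogues of Theorem \ref{T:Poinc} and Proposition \ref{P:dual}, which your route does require; your route, when completed, is conceptually cleaner but needs strictly more input.

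The one step whose justification as written is not adequate is $Li_0^*((\Pbun)^{\vee})=(\ov\P)^{\vee}$. The ambient scheme $\J_{\X}(\un q)\times_{\Spec R_X}\J_{\X}(\un q')$ is Gorenstein but not regular (its fibres over $\Spec R_X$ are singular), so $\Pbun$ is not a priori perfect, and the fact that $i_0$ is a regular immersion with trivial relative dualizing sheaf does not by itself make $Li_0^*$ commute with $\RHom(-,\O)$ on non-perfect sheaves; nor does flatness of $\Pbun$ over $\Spec R_X$ alone, since that does not guarantee flatness of the dual. What is actually needed is that $\Pbun$ is flat over $\Spec R_X$ with maximal Cohen--Macaulay restriction to \emph{every} fibre (the universal analogue of Theorem \ref{T:Poinc}, available because every $\X_s$ has planar singularities), after which \cite[Lemma 2.1(2)]{arin2} gives that $\SHom(\Pbun,\O)$ is again flat over $\Spec R_X$ and its formation commutes with base change --- exactly the tool the paper invokes in Proposition \ref{P:dual}\eqref{P:dual4} for restriction to slices $\{I\}\times\bJbar_X^0$. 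With that substitution your argument closes: the remaining compatibilities (derived flat base change along the proper flat $p_{13}$, flatness of $\O_{\Delta^{\rm un}}$ over $\Spec R_X$, monoidality of $Lj_0^*$) are all correct as you state them, and the identification of $Li_0^*\vartheta^{\rm un}$ with $\vartheta$ is the same naturality check the paper itself performs when asserting the commutativity of its base-change diagram.
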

\begin{proof}
The natural morphism $\vartheta$ is defined similarly to  the morphism $\vartheta^{\rm un}$ in \eqref{E:key-univ}.
By base changing the isomorphism $\vartheta^{\rm un}$ to the central fiber, we obtain the following diagram in  $\Dc^b(\J_{X}(\un q)\times \J_{X}(\un q))$:
\begin{equation}\label{E:diag-bsch}
\xymatrix{
\Psi^{\rm un}[g]_{|\J_{X}(\un q)\times \J_{X}(\un q)} \ar[r]^{\vartheta^{\rm un}}_{\cong} \ar[d]_b & (\O_{\Delta^{\rm un}})_{|\J_{X}(\un q)\times \J_{X}(\un q)} \ar[d]_{\cong}\\
 \Psi[g] \ar[r]^{\vartheta} & \O_{\Delta}
}
\end{equation}
where $b$ is the base change morphism (see e.g. \cite[Rmk. 3.33]{Huy}). Note that the complex $\Psi[g]$ is supported in non-positive degree (as it follows easily from its definition) and, since the base change morphism is an
isomorphism in top degree (see e.g. \cite[Thm. 12.11]{Har}), we deduce that the morphism $b$ induces an isomorphism on the $0$-th cohomology sheaves:
\begin{equation}\label{E:top-iso}
\calH^0(b): \Psi^{\rm un}[g]_{|\J_{X}(\un q)\times \J_{X}(\un q)}=\calH^0(\Psi^{\rm un}[g]_{|\J_{X}(\un q)\times \J_{X}(\un q)})\stackrel{\cong}{\longrightarrow} \calH^0(\Psi[g]).
\end{equation}
Moreover, since the sheaf $\Psi^{\rm un}[g]_{|\J_{X}(\un q)\times \J_{X}(\un q)}$ is supported on $\Delta$, by the base change theorem (see e.g. \cite[Thm. 12.11]{Har}) we deduce that
$\Psi[g]$ has set-theoretic support on $\Delta$, which has codimension $g$ inside
$\ov J_X(\un q)\times \ov J_X(\un q)$. Therefore, Proposition 2.26 of \cite{Muk2} gives that $\Psi$ is supported in degree $g$, or in other words that
\begin{equation}\label{E:Psi-0deg}
\Psi[g]\stackrel{\cong}{\longrightarrow} \calH^0(\Psi[g]).
\end{equation}
From \eqref{E:top-iso} and \eqref{E:Psi-0deg}, it follows that the base change morphism $b$ is an isomorphism; using the diagram \eqref{E:diag-bsch}, we deduce that $\vartheta$ is an isomorphism.

\end{proof}

With the help of the above Corollary, we can now prove Theorem A from the introduction.

\begin{proof}[Proof of Theorem A]
Consider the integral functor
\begin{eqnarray*}
\Phi^{\ov{\P}^{\vee}[g]}:\Dqc^b(\ov{J}_X(\un q'))& \longrightarrow & \Dqc^b(\ov{J}_X(\un q))\\
\cplx{E} &\longmapsto & \bR p_{1*}(p_2^*(\cplx{E})\lotimes \ov{\P}^{\vee}[g])
\end{eqnarray*}
where $\ov\P^{\vee}$ is the dual sheaf of $\ov\P$ as in  Proposition \ref{P:dual}.
The composition $\Phi^{\ov{\P}^{\vee}[g]}\circ \Phi^{\ov\P}$ is the  integral functor with kernel given by $\Psi[g]$
\begin{eqnarray*}
\Phi^{\ov{\P}^{\vee}[g]}\circ \Phi^{\ov\P}=\Phi^{\Psi[g]}:\Dqc^b(\ov{J}_X(\un q))& \longrightarrow & \Dqc^b(\ov{J}_X(\un q))\\
\cplx{E} &\longmapsto & \bR p_{2*}(p_1^*(\cplx{E})\lotimes\Psi[g]),
\end{eqnarray*}
where $\Psi$ is the complex of \eqref{E:psi}, see e.g. \cite[Sec. 1.4]{HLS}.
 Since $\Psi[g]=\O_{\Delta}$ by Corollary \ref{C:psi}, we have that $\Phi^{\ov{\P}^{\vee}[g]}\circ \Phi^{\ov\P}=\id$. By exchanging the roles of $\J_X(\un q)$ and $\J_X(\un q')$, we get similarly that
 $\Phi^{\ov\P}\circ \Phi^{\ov{\P}^{\vee}[g]}=\id$, which proves the first statement of Theorem A.

 The second statement follows from the first one together with the fact that  $\Phi^{\ov\P}$  sends $\Dc^b(\ov{J}_X(\un q'))$ into $\Dc^b(\J_X(\un q))$ (and similarly for
 $\phi^{\ov\P^{\vee}[g]}$) because $\ov\P$ is a coherent sheaf and $\J_X(\un q)$ and $\J_X(\un q')$ are proper varieties.

\end{proof}

Finally, we can prove Theorem B from the introduction.

\begin{proof}[Proof of Theorem B]
First of all, let us check that the morphism $\rho_{\un q}$ is well-defined.

For any $I\in \bJbar_X^{1-g}$,  $\ov\P_I:=\ov\P_{|\ov{J}_X(\un q) \times \{I\}}$ is a Cohen-Macaulay sheaf on $\J_X(\un q)$  (by Theorem \ref{T:Poinc}\eqref{T:Poinc2})
whose restriction to the dense open subset $J_X(\un q)\subseteq \J_X(\un q)$ is a line bundle (see \S \ref{S:Poinc1}), which implies that  $\ov\P_I$ has rank-$1$ on each irreducible component of
$\J_X(\un q)$. Moreover, by the definition of the integral transform $\Phi^{\ov\P}$, it follows that $\Phi^{\ov\P}({\bf k}(I))=\ov\P_I$ which, using the fully faithfulness of $\Phi^{\ov\P}$ (see Theorem A), gives that
$$\Hom(\ov\P_I, \ov\P_I)=\Hom({\bf k}(I),{\bf k}(I))=k,$$
or, in other words, that $\ov\P_I$ is simple. Therefore, we get  a morphism
\begin{equation}\label{E:rho-tilde}
\begin{aligned}
\wt{\rho}_{\un q}: \bJbar_X^{1-g} & \longrightarrow  \Pmm(\J_X(\un q)) \\
I & \mapsto \ov\P_I
\end{aligned}
\end{equation}
whose image is contained in $\Pm(\J_X(\un q))\subseteq \Pmm(\J_X(\un q))$. Since $\wt{\rho}_{\un q}(\O_X)=\beta_{\un q}(\O_X)=\O_{\J_X(\un q)}\in \Pic^o(\J_X(\un q))\subset \Pmm(\J_X(\un q))$,
the morphism $\wt{\rho}_{\un q}$ induces the required morphism $\rho_{\un q}$ by passing to the connected components containing the structure sheaves.
This concludes the proof that $\rho_{\un q}$ is well-defined and  it also shows that \eqref{E:Bi} holds true.

The morphism  $\wt{\rho}_{\un q}$ (and hence also $\rho_{\un q}$) is equivariant with respect to the isomorphism $\beta_{\un q}$ since  for any $I\in \bJbar_X^{1-g}$ and $L\in \Pic^o(X)$
it holds that $\ov\P_I\otimes \P_L=\ov\P_{I\otimes L}$ by Proposition \ref{P:mult}.

Moreover, $\wt{\rho}_{\un q}$ induces a homomorphism of group schemes
$$\begin{aligned}
(\wt{\rho}_{\un q})_{|\bJ_X^{1-g}}: \bJ_X^{1-g}=\Pic^0(X) & \longrightarrow  \Pic(\J_X(\un q)) \\
I & \mapsto \ov\P_I=\P_I
\end{aligned}
$$
since
for any $I, I'\in \bJ_X^{1-g}$ we have that $\P_{I\otimes I'}=\P_I\otimes \P_{I'}$ by Proposition \ref{P:mult} and $\P_I^{-1}=\P_{I^{-1}}$ by Proposition \ref{P:duality}.
This shows that \eqref{E:Bii} holds true.

Consider now the restriction of $\wt{\rho}_{\un q}$ to  a fine compactified Jacobian $\J_X(\un q')\subseteq \bJbar_X^{1-g}$:
\begin{equation}\label{E:rho-qq'}
\begin{aligned}
\wt{\rho}_{\un q'/\un q}: \J_X(\un q') & \longrightarrow  \Pmm(\J_X(\un q)) \\
I & \mapsto \ov\P_I.
\end{aligned}
\end{equation}

\un{CLAIM 1:} $\wt{\rho}_{\un q'/\un q}$ is an open embedding.

The proof of this claim is similar to \cite[Proof of Theorem B]{arin2}; let us sketch the argument for the benefit of the reader.
Fix a polarization $\O(1)$ on $\J_{X}(\un q')$ and, for any coherent sheaf $S$ on $\J_X(\un q')$, denote by $\theta(S)\in \bbQ[t]$ the Hilbert polynomial of $S$ with respect to $\O(1)$.
Consider the locus $\L$ inside $\Pmm(\J_X(\un q'))$ consisting of the sheaves $F\in \Pmm(\J_X(\un q'))$ such that (with the notation of the proof of Theorem A):
\begin{equation}\label{E:Hilb-pol}
\theta(H^i(\Phi^{\ov{P}^{\vee}[g]}(F)))=
\begin{cases}
1 & \text{ if } i=0,\\
0 & \text{ if }Êi\neq 0.
\end{cases}
\end{equation}
Using the upper semicontinuity of the Hilbert polynomial, it follows that $\L$ is an open subset of $\Pmm(\J_X(\un q'))$ (see \cite[Proof of Theorem B]{arin2}).
From \eqref{E:Hilb-pol}, we deduce  that $F\in \L$ if and only if $\Phi^{\ov{P}^{\vee}[g]}(F)={\bf k}(I)$ for some $I\in  \J_X(\un q')$, which using Theorem A, is equivalent to the fact that
$F=\Phi^{\ov\P}({\bf k}(I))$. In other words, the image of $\wt{\rho}_{\un q'/\un q}$ is equal to the open subset $\L\subseteq \Pmm(\J_X(\un q))$.
Moreover, $\wt{\rho}_{\un q'/\un q}$ is an isomorphism into its image $\L=\Im \wt{\rho}_{\un q'/\un q}$ whose inverse is given by the morphism
$$
\begin{aligned}
\sigma: \L& \longrightarrow \J_X(\un q'), \\
F & \mapsto \supp \Phi^{\ov\P^{\vee}[g]}(F). \\
\end{aligned}
$$

Before proving part \eqref{E:Biv}, let us examine how the map $\rho_{\un q}$ behaves with respect to the decomposition of $X$ into its separating blocks.
Consider the partial normalization $\wt{X}\to X$ at the separating nodes of $X$ and denote by $Y_1,\ldots, Y_r$ the images in $X$ of the connected components of
$\wt{X}$  (in \cite[\S 6.2]{MRV1}, the curves $\{Y_1,\ldots,Y_r\}$ are called the separating blocks of $X$). Note that $Y_1,\ldots,Y_r$ are connected (reduced and projective) curves with planar singularities.
From \cite[Prop. 6.6(i)]{MRV1}, it follows that  we have an isomorphism
$$
\begin{aligned}
\bJbar_X & \stackrel{\cong}{\longrightarrow} \bJbar_{Y_1}\times \ldots \times \bJbar_{Y_r} \\
I & \mapsto (I_{|Y_1}, \ldots, I_{|Y_r}),
\end{aligned}
$$
which implies that
\begin{equation}\label{E:dec-PicX}
\Picbar(X)=\Picbar(Y_1)\times \ldots \times \Picbar(Y_r).
\end{equation}
Using \cite[Prop. 6.6]{MRV1}, we get the existence of general polarizations $\un q^i$ on $Y_i$ such that
$$\J_X(\un q)=\J_{Y_1}(\un q^1)\times \ldots\times \J_{Y_r}(\un q^r),$$
which implies that
\begin{equation}\label{E:dec-PicJ}
\Picbar(\J_X(\un q))=\Picbar(\J_{Y_1}(\un q^1))\times \ldots \times \Picbar(\J_{Y_r}(\un q^r)).
\end{equation}
Moreover, arguing as in \cite[Lemma 5.5]{MRV2}, the fibers of the Poincar\'e sheaf $\ov \P$ over a sheaf $I=(I_1, \ldots, I_r)\in \Picbar(X)=\Picbar(Y_1)\times \ldots \times \Picbar(Y_r)$ are such that
\begin{equation}\label{E:dec-Poinc}
\ov\P_I=\ov \P^1_{I_1}\boxtimes \ldots \boxtimes \ov \P^r_{I_r}:=p_1^*(\ov \P^1_{I_1})\times \ldots \times p_r^*(\ov \P^r_{I_r}),
\end{equation}
where $p_i:\Picbar( \J_X(\un q))\to \Picbar(\J_{Y_i}(\un q^i))$ denotes the projection onto the $i$-th factor in the decomposition \eqref{E:dec-PicJ} and $\ov\P^i$ is the Poincar\'e sheaf on
$\bJbar_{Y_i}^{1-p_a(Y_i)}\times \bJbar_{Y_i}^{1-p_a(Y_i)}$. Putting together the decompositions \eqref{E:dec-PicX}, \eqref{E:dec-PicJ} and \eqref{E:dec-Poinc}, we get that the morphism $\rho_{\un q}$
decomposes as
\begin{equation}\label{E:deco-rho}
\begin{aligned}
\rho_{\un q} =\prod_{i=1}^r \rho_{\un q^i}: \Picbar(X)=\prod_{i=1}^r\Picbar(Y_i) & \longrightarrow
\Picbar(\J_X(\un q))=\prod_{i=1}^r\Picbar(\J_{Y_i}(\un q^i)) \\
I=(I_1, \ldots, I_r) & \mapsto \ov\P_I=\ov \P^1_{I_1}\boxtimes \ldots \boxtimes \ov \P^r_{I_r}
\end{aligned}
\end{equation}

We can now easily prove part \eqref{E:Biv}. Indeed, if the curve $X$ is such that every singular point of $X$ that lies on two different irreducible components is a separating node, then the separating blocks
$\{Y_1,\ldots,Y_r\}$ are integral curves with planar singularities. Therefore, \cite[Thm. B]{arin2} implies that each $\rho_{\un q^i}:\Picbar(Y_i)\to \Picbar(\J_{Y_i}(\un q^i))=\Picbar(\bJbar_{Y_i}^0)$
is an isomorphism (for any $i=1,\ldots, r$).  We conclude that $\rho_{\un q}$ is an isomorphism by \eqref{E:deco-rho}.Ê


Finally, it remains to show that $\rho_{\un q}$ is an open embedding. According to \eqref{E:deco-rho}, we can (and will) assume that the curve $X$ does not have separating nodes.
Under this assumption, we will prove more generally that  $\wt{\rho}_{\un q}$ is an open embedding.
Since fine compactified Jacobians form an open cover of $\bJbar_X^{1-g}$ (see Fact \ref{F:Este-Jac}\eqref{F:Este-Jac3}), Claim 1  gives that
$\wt{\rho}_{\un q}$  is a local isomorphism.
Therefore it remains to show that $\wt{\rho}_{\un q}$ is injective on geometric points, or in other words it is enough to establish the following

\un{CLAIM 2:} Assume that $X$ does not have separating nodes. If $I_1,I_2\in \bJbar_X^{1-g}$ are such that $\ov\P_{I_1}=\ov\P_{I_2}$ then $I_1=I_2$.

In order to prove the Claim, we are going to extend the morphism $\wt{\rho}_{\un q}$ over the effective semiuniversal deformation $\pi:\X\to \Spec R_X$ of $X$ (see \S\ref{S:defo}).
Consider the universal fine compactified Jacobian $\J_{\X}(\un q)\to \Spec R_X$ with respect to the polarization $\un q$ (see \S\ref{S:defo}) and form the algebraic space
$w: \Pmm(\J_{\X}(\un q))\to \Spec R_X$ parametrizing coherent sheaves on $\J_{\X}(\un q)$, flat over $\Spec R_X$, which are relatively simple,  torsion-free of rank-$1$ (see \cite[Thm. 7.4, Prop. 5.13(ii)]{AK}).
Using the universal Poincar\'e sheaf $\Pbun$ over $u\times u: \bJbar_{\X}^{1-g}\times_{\Spec R_X} \bJbar_{\X}^{1-g}\to \Spec R_X$ introduced at the beginning of Section \ref{S:proof}, we can define a
$\Spec R_X$-morphism
\begin{equation}\label{E:rho-univ}
\begin{aligned}
\wt{\rho}^{\rm un}_{\un q}: \bJbar_{\X}^{1-g} & \longrightarrow  \Pmm(\J_{\X}(\un q)) \\
\I & \mapsto \ov\P^{\rm un}_{\I}:=\ov\P^{\rm un}_{|\J_{\X}(\un q)\times \{\I\}}
\end{aligned}
\end{equation}
whose restriction to the closed point $o$ of $\Spec R_X$ is the morphism $\wt{\rho}_{\un q}$ introduced in \eqref{E:rho-tilde}.

Now,  by Fact \ref{F:Este-Jac}\eqref{F:Este-Jac3}, we can choose two general polarizations $\un q^1$ and $\un q^2$ on $X$ such that $I_i\in \J_X(\un q^i)$ for $i=1,2$.
From a relative version of Claim 1, we deduce that the restrictions of $\wt{\rho}^{\rm un}_{\un q}$ to $\J_{\X}(\un q^i)$ (for $i=1,2$) are open embeddings. Consider the open subset
$V:=\wt{\rho}^{\rm un}_{\un q}(\J_{\X}(\un q^1))\cap \wt{\rho}^{\rm un}_{\un q}(\J_{\X}(\un q^2))\subseteq \Pmm(\J_{\X}(\un q))$ which clearly contains the point $\P_{I_1}=\P_{I_2}\in \Pmm(\J_X(\un q))$.
On the fiber product $\X\times_{\Spec R_X} V$ there are two coherent sheaves $\calJ^1$ and $\calJ^2$, flat over $V$ and relatively simple, torsion-free of rank-$1$,  that are obtained by push-forward via
$\id\times \wt{\rho}^{\rm un}_{\un q}$ of the universal sheaves on $\X\times_{\Spec R_X}\J_{\X}(\un q^1)$ and on $\X\times_{\Spec R_X}\J_{\X}(\un q^2)$.
If we denote (for $i=1,2$) by $\calJ^i_v$ the restriction of $\calJ^i$ to the fiber of  $ \X\times_{\Spec R_X} V\to V$ over $v\in V$, then by construction
we have that  $\ov\P^{\rm un}_{\calJ^1_v}=\ov\P^{\rm un}_{\calJ^2_v}$ for any $v\in V$.
Claim 2 will be proved if we show that $\calJ^1_v=\calJ^2_v$ for any $v\in V$, or in other words that
\begin{equation}\label{E:claim-eq}
\calJ^1=\calJ^2\otimes p_2^*(L) \hspace{0.2cm} \text{ for some line bundle } L \: \text{ on } V.
\end{equation}

Denote by $V_0$ the open subset of $V$ consisting of all the points $v\in V$ whose image $w(v)$ in $\Spec R_X$ is such that the fiber $\X_{w(v)}$ of the universal curve $\pi:\X\to \Spec R_X$ over $w(v)$
is smooth or it has a unique singular point which is a node.
By Lemma \ref{L:openU}, the complement of $V_0$ has codimension at least two inside $V$.
Note that, since $X$ does not have separating nodes and separating nodes are preserved under specialization (see \cite[Cor. 3.8]{MRV2}), for every $v\in V$ the curve $\X_{w(v)}$ does not have separating nodes or, equivalently, it is
an integral curve with at most one node.
Therefore, \cite[Thm. 4.1]{EK} (or \cite[Thm. B]{arin2}) implies that \eqref{E:claim-eq} is true over $V_0$, i.e. $\calJ^1_{|V_0}=\calJ^2_{|V_0}\otimes p_2^*(L_0)$ Êfor some line bundle
$L_0\in V_0$. Since $V$ is smooth
(because $\J_{\X}(\un q^i)$ is smooth by Fact \ref{F:Punivfine}\eqref{F:Punivfine1}) and the complement of $V_0$ has codimension at least two inside $V$, we can extend the line bundle $L_0$ on $V_0$
to a line bundle $L$ on $V$.  With this choice of $L$, the two sheaves appearing on the left and right hand side of \eqref{E:claim-eq} are Cohen-Macaulay sheaves on $V$ that agree on the open subset $V_0$ whose complement has codimension at least two; therefore the two sheaves agree on  $V$ by \cite[Thm. 5.10.5]{EGAIV2}, q.e.d.

\end{proof}


\noindent {\bf Acknowledgments.}
We thank E. Esteves, T. Hausel, J.L. Kass, L. Migliorini and T. Pantev for useful conversations related to this work. We are extremely grateful to M. Groechenig for suggesting to extend our original result to
the case of two different fine compactified Jacobians and also to consider the case of a base field of positive (large) characteristic. 

This project started while M. Melo was visiting the Mathematics Department of the University of Roma ``Tor Vergata'' funded by the ``Michele Cuozzo'' 2009 award. She wishes to express her gratitude to Michele Cuozzo's family and to the Department for this great opportunity.

M. Melo was supported by a Rita Levi Montalcini Grant, funded by the Italian government through MIUR. 
A. Rapagnetta and F. Viviani are supported by the MIUR national project FIRB 2012 ``Spazi di moduli e applicazioni".
M. Melo and F. Viviani are members of the Centre for Mathematics of the
University of Coimbra (CMUC) -- UID/MAT/00324/2013, funded by the Portuguese
Government through FCT/MEC and co-funded by the European Regional Development Fund through the Partnership Agreement PT2020.

\end{document}